%%% PREAMBLE    %%%%%

%%% declare %%%

\documentclass[a4paper,12pt]{article}

\usepackage{amsmath}
\usepackage{amsthm}
\usepackage{amssymb}

\usepackage{latexsym}
\usepackage{array}
\usepackage{enumerate, graphicx}

\pagestyle{plain}

\setlength{\topmargin}{-45pt}
\setlength{\oddsidemargin}{0cm}
\setlength{\evensidemargin}{0cm}
\setlength{\textheight}{23.7cm}
\setlength{\textwidth}{16cm}

\numberwithin{equation}{section}

%%% macro %%%

\newcommand{\CC}{\mathbb{C}}
\newcommand{\KK}{{\rm K}}

\newcommand{\RR}{\mathbb{R}}
\newcommand{\QQ}{\mathbb{Q}}
\newcommand{\PP}{{\mathbb P}}
\newcommand{\LL}{{\mathbb L}}

\newcommand{\ZZ}{\mathbb{Z}}

\newcommand{\F}{\mathcal{F}}

\newcommand{\Hh}{\mathcal{H}}

\newcommand{\M}{\mathcal{M}}

\newcommand{\sho}{\mathcal{O}}

\newcommand{\HS}{{\rm HS}}
\newcommand{\HSm}{{\rm HS}^{\rm mon}}

\renewcommand{\d}{{\rm dim}}

\newcommand{\Vol}{{\rm Vol}}

\newcommand{\e}{\varepsilon}
\renewcommand{\SS}{{\mathcal S}}
\newcommand{\Spec}{{\rm Spec}}

\newcommand{\id}{{\rm id}}
\newcommand{\supp}{{\rm supp}}

\newcommand{\Int}{{\rm Int}}
\newcommand{\relint}{{\rm rel.int}}

\newcommand{\height}{{\rm ht}}
\newcommand{\1}{{\bf 1}}

\newcommand{\gen}{{\rm gen}}
\newcommand{\Con}{{\rm Con}}
\newcommand{\Var}{{\rm Var}}

\renewcommand{\sp}{{\rm sp}}
\newcommand{\Cone}{{\rm Cone}}

\newcommand{\Db}{{\bf D}^{b}}
\newcommand{\Dbc}{{\bf D}_{c}^{b}}

\newcommand{\tl}[1]{\widetilde{#1}}

\newcommand{\simto}{\overset{\sim}{\longrightarrow}}

\newcommand{\dsum}{\displaystyle \sum}

%%% theorem style %%%

\newtheorem{definition}{Definition}[section]
\newtheorem{theorem}[definition]{Theorem}
\newtheorem{proposition}[definition]{Proposition}
\newtheorem{lemma}[definition]{Lemma}
\newtheorem{corollary}[definition]{Corollary}

\newtheorem{remark}[definition]{Remark}

%%% title %%%

%%%%%    TEXT START    %%%%%

\title{Motivic Milnor fibers over complete intersection
varieties and their virtual Betti numbers
\footnote{{\bf 2000 Mathematics
Subject Classification: }14M25,
32S40, 32S60,
33C70, 35A27}}

\author{Alexander ESTEROV
\footnote{Department of Algebra, Faculty of
Mathematics, Complutense University, Plaza
de las Ciencias, 3, Madrid, 28040, Spain.
E-mail: esterov@mccme.ru
(this author is partially supported by the grant
RFBR-10-01-00678) }
and Kiyoshi TAKEUCHI
\footnote{Institute of Mathematics, University  of
Tsukuba, 1-1-1, Tennodai,
Tsukuba, Ibaraki, 305-8571, Japan.
E-mail: takemicro@nifty.com } }

\date{}

\sloppy

\begin{document}

\maketitle

\begin{abstract}
We study the Jordan normal
forms of the local and global
monodromies over complete intersection
subvarieties of $\CC^n$ by using the
theory of motivic Milnor fibers. The
results will be explicitly described by
the mixed volumes of the
faces of Newton polyhedrons.
\end{abstract}

\section{Introduction}\label{sec:1}

In this paper, we study the Jordan normal
forms of the local and global
monodromies over complete intersection
subvarieties of $\CC^n$ with the help of the
theory of motivic Milnor fibers
and their Hodge realizations developed by
Denef-Loeser \cite{D-L-1}, \cite{D-L-2},
Guibert-Loeser-Merle \cite{G-L-M}
and \cite{M-T-4} etc. For $2 \leq k \leq n$ let
\begin{equation}
W= \{ f_1= \cdots =f_{k-1}=0 \} \supset
V= \{ f_1= \cdots =f_{k-1}=f_k=0 \}
\end{equation}
be complete intersection subvarieties
of $\CC^n$ such that $0 \in V$. Assume
that $W$ and $V$ have
isolated singularities at the origin
$0 \in \CC^n$. Then by a fundamental
result of Hamm \cite{Hamm} the Milnor
fiber $F_0$ of $g:=f_k|_W \colon W
\longrightarrow \CC$ at the origin $0$
has the homotopy type of the bouquet
of $(n-k)$-spheres. In particular,
we have $H^j(F_0;\CC) \simeq 0$
($j\neq 0, \ n-k$) and the monodromy
operator $\Phi_{n-k,0}\colon H^{n-k} (F_0;\CC)
\simeq  H^{n-k} (F_0;\CC)$ is called the
$k$-th principal monodromy of
$f:=(f_1, f_2, \ldots , f_k)$.
Under some weak additional
conditions, a beautiful
formula for the eigenvalues of the
$k$-th principal monodromy
$\Phi_{n-k,0}$ was obtained by Oka \cite{Oka-3},
\cite{Oka-2} and Kirillov \cite{Kirillov} etc.
For related results,
see also \cite{E2}, \cite{M-T-1}
and \cite{M-T-2} etc. Moreover the mixed Hodge
structures of the Milnor fiber $F_0$ were
precisely studied by Ebeling-Steenbrink
\cite{E-S} and Tanabe \cite{Tanabe} etc.
However, to the best of our knowledge,
almost nothing is known for
the Jordan normal form of
$\Phi_{n-k,0}$. For a special but important
case, see Dimca \cite{Dim}. In this paper,
we propose a method to describe the Jordan
normal form of $\Phi_{n-k,0}$
in terms of the Newton
polyhedrons of $f_1, f_2, \ldots , f_k$.
Let $\M_{\CC}^{\hat{\mu}}$ be the (localized)
Grothendieck ring of varieties over $\CC$
with (good) action introduced by
Denef-Loeser \cite{D-L-2}.
Then, just as in
Denef-Loeser \cite{D-L-1}, \cite{D-L-2},
and Guibert-Loeser-Merle \cite{G-L-M}
we can introduce an element $\SS_{g,0}$ of
$\M_{\CC}^{\hat{\mu}}$ whose mixed
Hodge numbers carry the information
of $\Phi_{n-k,0}$. We call $\SS_{g,0}$ the
motivic Milnor fiber of $g \colon W
\longrightarrow \CC$ at the origin $0 \in \CC^n$.
We will show that their mixed Hodge numbers can be
easily calculated and hence give
an algorithm to compute the
Jordan normal form of $\Phi_{n-k,0}$.
In order to describe our
results more explicitly, assume also that $f_1,
f_2, \ldots , f_k$ are convenient and
the complete intersection varieties
$W$ and $V$ are non-degenerate at the origin
$0 \in \CC^n$ (see Definition \ref{NDCI}
and \cite{Oka-2} etc.). Set
\begin{equation}
\Gamma_+(f):=\Gamma_+(f_1)+\Gamma_+(f_2)+
\cdots + \Gamma_+(f_k)
\end{equation}
and let $\Gamma_f$ be the union of compact faces of
$\Gamma_+(f)$,
where $\Gamma_+(f_j) \subset \RR_+^n$ is
the Newton polyhedron of $f_j$ at the origin
$0 \in \CC^n$. Then for
each face $\Theta \prec
\Gamma_+(f)$ such that $\Theta \subset
\Gamma_f$ we can naturally define faces
$\gamma_j^{\Theta}$ of $\Gamma_+(f_j)$
($1 \leq j \leq k$) such that
\begin{equation}
\Theta = \gamma_1^{\Theta} + \gamma_2^{\Theta}
+ \cdots + \gamma_k^{\Theta}.
\end{equation}

\begin{center}
\noindent\includegraphics[width=14cm]{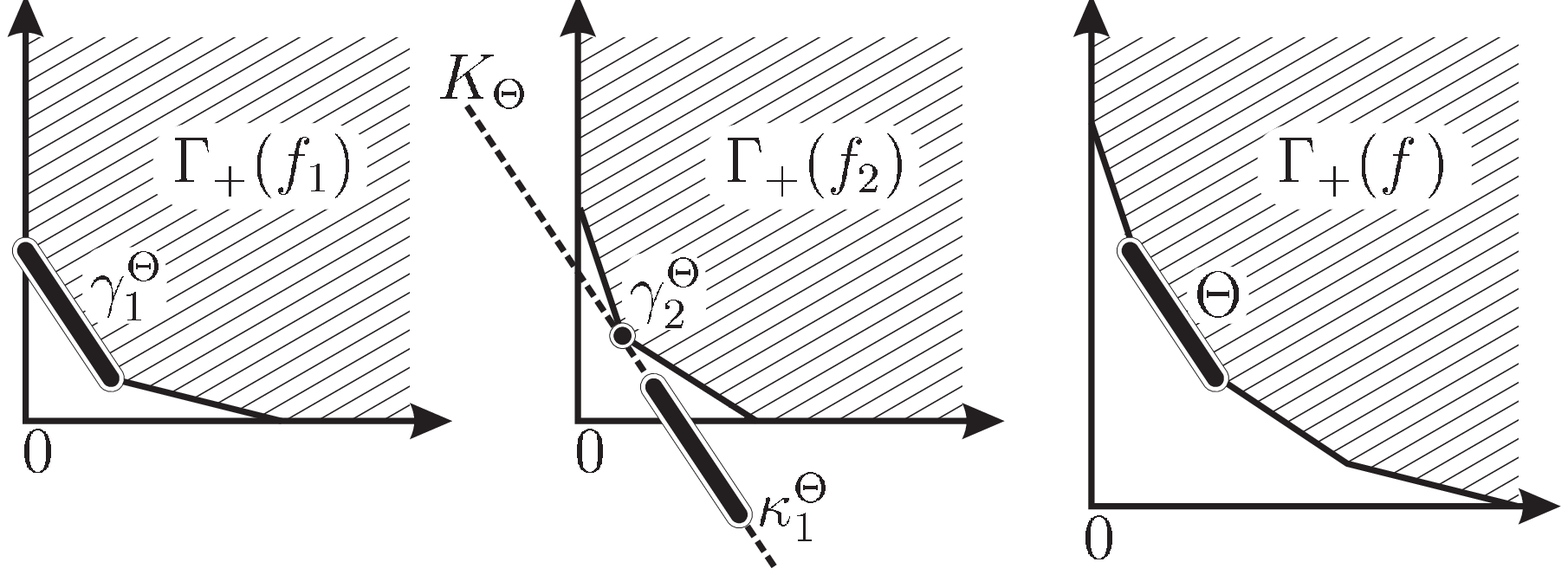} \end{center}

\noindent (The case where $k=2$.
For $K_{\Theta}$ and
$\kappa_1^{\Theta}$
in the figure, see Section \ref{sec:4}).
By using $\gamma_j^{\Theta}$,
we define a non-degenerate complete intersection
subvariety $Z^*_{\Delta_{\Theta}} \subset
(\CC^*)^{\d \Theta +1}$ and
an element $[Z^*_{\Delta_{\Theta}}]
\in \M_{\CC}^{\hat{\mu}}$ (see Section \ref{sec:4}
for the details). Moreover
for such $\Theta \prec \Gamma_+(f)$
let $s_{\Theta}$ be the dimension of the
minimal coordinate subspace of $\RR^n$
containing $\Theta$ and set $m_{\Theta}=s_{\Theta}-
\d \Theta -1\geq 0$. Recall that
the element $[\LL ] \in
\M_{\CC}^{\hat{\mu}}$ called the Lefschetz motive
is defined to be the affine line $\LL \simeq
\CC$ with the trivial action.
Following the notations in
Denef-Loeser \cite[Section 3.1.2 and 3.1.3]{D-L-2},
we denote by $\HSm$ the abelian
category of Hodge structures with a
quasi-unipotent endomorphism.
Its Grothendieck group, whose elements are
formal differences of those of
$\HSm$, is denoted by $\KK_0(\HSm)$.
Let $\chi_h: \M_{\CC}^{\hat{\mu}}
\longrightarrow \KK_0(\HSm)$ be the Hodge
characteristic map in \cite{D-L-2}
(see also Section \ref{sec:4}).
For an element $[H] \in \KK_0(\HSm)$ and
$\lambda \in \CC^*$ we denote by $[H]_{\lambda}
\in \KK_0(\HS )$ the $\lambda$-eigenspace
of the quasi-unipotent endomorphism on $[H]$.

\begin{theorem}\label{GTM}
Assume that $\lambda \in \CC^*\setminus \{1\}$. Then
\begin{enumerate}
\item In the
Grothendieck group $\KK_0(\HS )$, we have
\begin{equation}
\chi_h(\SS_{g,0})_{\lambda}= \dsum_{
\Theta \subset \Gamma_f,
 \d \Theta \geq k-1 }
\chi_h\big((1-\LL)^{m_{\Theta}}
\cdot[Z_{\Delta_{\Theta}}^*]\big)_{\lambda}.
\end{equation}
\item For $i \geq 1$, the number
of the Jordan blocks for the eigenvalue
$\lambda$ with sizes $\geq i$ in
$\Phi_{n-k,0}\colon H^{n-k}(F_0;\CC) \simeq
H^{n-k}(F_0;\CC)$ is equal to
\begin{equation}
(-1)^{n-k} \left\{ \beta_{n-k-1+i}(\SS_{g,0})_{\lambda}
+ \beta_{n-k+i}(\SS_{g,0})_{\lambda} \right\},
\end{equation}
where $\beta_{j}(\SS_{g,0})_{\lambda}$ is the
$j$-th virtual Betti number of $\SS_{g,0}$
(with respect to the eigenvalue $\lambda$)
defined by the weight $j$-part of the
Hodge structure $\chi_h(\SS_{g,0})$
(see Section \ref{sec:3}).
\end{enumerate}
\end{theorem}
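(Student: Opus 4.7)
The plan is to prove part (i) by constructing a suitable toric modification adapted to $\Gamma_+(f)$ and then applying the formalism for motivic Milnor fibers along a function on a complete intersection developed in \cite{D-L-1,D-L-2,G-L-M} and the authors' preceding work \cite{M-T-4}. Concretely, let $\Sigma$ be a smooth subdivision of the dual fan of $\Gamma_+(f)=\Gamma_+(f_1)+\cdots+\Gamma_+(f_k)$ and let $\pi\colon X_\Sigma\to\CC^n$ be the associated toric modification. The non-degeneracy of the complete intersections $W$ and $V$ at the origin guarantees that, over a neighborhood of $0$, the strict transforms of $W$ and $V=g^{-1}(0)\cap W$ together with the exceptional toric divisors form a normal crossing configuration whose strata inside $\pi^{-1}(0)$ are naturally indexed by the compact faces $\Theta$ of $\Gamma_+(f)$.

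For each such $\Theta$, write $\Theta=\gamma_1^{\Theta}+\cdots+\gamma_k^{\Theta}$ and consider the face polynomials $f_j^{\gamma_j^{\Theta}}$. The corresponding stratum of $\tilde W\cap\pi^{-1}(0)$ is then a Zariski-locally trivial $(\CC^*)^{m_\Theta}$-bundle over the non-degenerate complete intersection $Z^*_{\Delta_\Theta}\subset (\CC^*)^{\d\Theta+1}$ cut out by the equations $f_1^{\gamma_1^{\Theta}}=\cdots=f_{k-1}^{\gamma_{k-1}^{\Theta}}=0$, and the local equation of $g=f_k|_W$ along the exceptional component is a monomial whose exponent determines the $\hat\mu$-action. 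Plugging this into the A'Campo-type formula for the motivic Milnor fiber of $g$ on $W$ yields, stratum by stratum, a contribution of the form $(1-\LL)^{m_\Theta}\cdot [Z^*_{\Delta_\Theta}]$ in $\M_\CC^{\hat\mu}$. The dimensional restriction $\d\Theta\geq k-1$ appears precisely as the condition that $Z^*_{\Delta_\Theta}$ -- a complete intersection of $k$ hypersurfaces in a torus of dimension $\d\Theta+1$ -- is not forced to be empty; smaller faces drop out of the summation.

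For part (ii) the argument is purely linear-algebraic, given (i). By Hamm's theorem $H^j(F_0;\CC)=0$ for $j\neq 0,n-k$, and since the monodromy on $H^0(F_0;\CC)$ is trivial, the $\lambda$-eigenspace for $\lambda\neq 1$ of $\chi_h(\SS_{g,0})$ equals $(-1)^{n-k}[H^{n-k}(F_0;\CC)_\lambda]$ in $\KK_0(\HS)$. Hence $(-1)^{n-k}\beta_j(\SS_{g,0})_\lambda=\d\,\mathrm{Gr}^W_j H^{n-k}(F_0;\CC)_\lambda$ for the limit mixed Hodge structure. The relative monodromy weight filtration on the $\lambda\neq 1$ part is centered at $n-k$, so a Jordan block of size $s$ contributes one-dimensional graded pieces in weights $n-k-s+1,n-k-s+3,\ldots,n-k+s-1$. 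A parity-based counting argument then shows that
\begin{equation*}
\d\,\mathrm{Gr}^W_{n-k-1+i} H^{n-k}(F_0;\CC)_\lambda + \d\,\mathrm{Gr}^W_{n-k+i} H^{n-k}(F_0;\CC)_\lambda
\end{equation*}
equals the number of Jordan blocks of size $\geq i$, which is the desired formula.

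The main obstacle is Step 2 of the plan for (i): carefully identifying the motivic nearby cycle of $g$ on the singular variety $W$ in terms of the toric geometry of $\Gamma_+(f)$. One must (a) reduce the nearby cycle on $W$ to an iterated/convolution nearby cycle for the full tuple $(f_1,\ldots,f_k)$ on $\CC^n$ via the Guibert--Loeser--Merle formalism, (b) show that the $\hat\mu$-action coming from the monomial expression of $g$ in toric coordinates is consistent across strata and matches the one on $[Z^*_{\Delta_\Theta}]$, and (c) isolate the Lefschetz factor $(1-\LL)^{m_\Theta}$ arising from the free torus directions transverse to the minimal coordinate subspace containing $\Theta$. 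The combinatorics of additive Minkowski decompositions $\Theta=\gamma_1^\Theta+\cdots+\gamma_k^\Theta$ enters here and is where the bookkeeping is the most delicate; once these pieces are assembled, the stated formula for $\chi_h(\SS_{g,0})_\lambda$ drops out directly.
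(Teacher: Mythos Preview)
Your proposal is correct and follows essentially the same approach as the paper. The paper restates this result as Theorem~\ref{general} in Section~\ref{sec:4} and proves (i) exactly as you outline---by passing to the toric modification $\pi\colon X_\Sigma\to\CC^n$ associated with a smooth subdivision of the dual fan of $\Gamma_+(f)$ and invoking \cite[Theorems~5.3 and~7.3]{M-T-4} for the stratum-by-stratum computation---while (ii) is deduced in one line from Proposition~\ref{NJB}(ii), which packages the monodromy-weight-filtration counting argument you have written out explicitly.

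One small remark on emphasis: in your ``main obstacle'' paragraph you suggest reducing the nearby cycle on $W$ to an iterated/convolution nearby cycle on $\CC^n$ via the Guibert--Loeser--Merle formalism. The paper does not take that route; instead it works directly with the Denef--Loeser formula for $\SS_{g,0}$ in terms of the embedded resolution (Definition of $\SS_{g,0}$ via the covers $\tl{E_I^\circ}$ and the strata $F_I^\circ$), using that the proper transform $W'$ of $W$ in $X_\Sigma$ is already transverse to the exceptional divisors by non-degeneracy. This bypasses your step~(a) entirely, and steps~(b) and~(c) then become a straightforward torus-orbit decomposition argument identical to the hypersurface case in \cite{M-T-4}. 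Your GLM-based alternative would also work, but is more machinery than the situation requires.
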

By this theorem, for the determination of
the Jordan normal form of $\Phi_{n-k,0}$
concerning the eigenvalues $\lambda \not= 1$,
it suffices to calculate the weight
multiplicities of
$\chi_h([Z^*_{\Delta_{\Theta}}])
\in \KK_0(\HSm)$. In
 Section \ref{sec:4},
by using the Cayley trick in
\cite[Section 6]{D-K} we reduce these
calculations to those for non-degenerate
hypersurfaces in algebraic tori with
action.
Since we can always calculate the mixed Hodge
numbers of such hypersurfaces by our
previous results in \cite[Section 2]{M-T-4},
we thus obtain an algorithm to compute the
Jordan normal form of $\Phi_{n-k,0}$. Moreover,
in Section \ref{sec:5} we give some
closed formulas for the numbers of
the Jordan blocks for the eigenvalues
$\lambda \not= 1$ with large sizes in
$\Phi_{n-k,0}$. The results will be
explicitly described by the mixed
volumes of the faces $\gamma_j^{\Theta}$
of $\Gamma_+(f_j)$ ($1 \leq j \leq k$).
See Theorems \ref{MPS} and
\ref{SMB} for the details.
In the course of the proof of these results,
Proposition \ref{AE}, which
generalizes a result of Khovanskii
\cite{Khovanskii}, will play a central
role. This proposition expresses an alternating
sum of the numbers of certain lattice points
defined by a polytope by its volume. We believe
that it will be very useful in the study of
lattice points in non-integral polytopes.
Indeed, by Proposition \ref{AE}
we can rewrite the main results
of \cite{M-T-4} much more simply in terms of
the volumes of polytopes. Unfortunately,
for some technical reason, by
our methods we cannot obtain
similar results for the eigenvalue $1$
(see Remark \ref{EV1} below).
In Section \ref{sec:6}, we will show that
our methods are useful also in the study of
the monodromies at infinity
over complete intersection
subvarieties of $\CC^n$. Even in
this global case, we obtain
results completely parallel to
the ones in the local case. We
thus find a beautiful symmetry between
local and global as in \cite{M-T-4}.

\medskip

\noindent{\bf Acknowledgement:} The authors would
like to express their hearty
gratitude to Professors Y. Matsui,
A. Melle-Hern{\'a}ndez and C. Sabbah for their
very fruitful discussions.

\section{Preliminary notions and results}\label{sec:2}

In this section, we introduce basic
notions and results which will be used
in this paper. In this paper, we essentially
follow the terminology of
\cite{Dimca}, \cite{H-T-T} and \cite{K-S}
etc. For example, for a
topological space $X$ we denote by
$\Db(X)$ the derived category whose
objects are bounded complexes of sheaves
of $\CC_X$-modules on $X$. Moreover
if $X$ is an algebraic variety over $\CC$
we denote by $\Dbc(X)$ the full
subcategory of $\Db(X)$ consisting of
constructible complexes of sheaves.
Let $f : X \longrightarrow \CC$
be a non-constant regular function on
an algebraic variety $X$ over $\CC$ and
set $X_0= \{x\in X\ |\ f(x)=0\}
\subset X$.
Then we have
the nearby cycle functor
\begin{equation}
\psi_f : \Db(X) \longrightarrow \Db(X_0)
\end{equation}
which preserves the constructibility
(see \cite{Dimca} and \cite{K-S} etc.).
As we see in Proposition \ref{prp:2-7} below,
the nearby cycle functor $\psi_f$
generalizes the classical notion of
Milnor fibers. First, let us recall the
definition of Milnor fibers
 over singular varieties
(see for example \cite{Takeuchi} for a
review on this subject). Let $X$ be a
subvariety of $\CC^m$ and $f \colon X
\longrightarrow \CC$ a non-constant
regular function on $X$. Namely we
assume that there exists a polynomial
function $\tl{f} \colon \CC^m \longrightarrow
\CC$ on $\CC^m$ such that
$\tl{f}|_X=f$. For simplicity, assume
also that the origin $0 \in \CC^m$ is
contained in $X_0= \{x\in X\ |\ f(x)=0\}$.
Then the following lemma is
well-known.

\begin{lemma}{\rm \bf(
\cite[Theorem 1.1]{Le})}\label{lem:2-5}
For sufficiently small $\e >0$, there
exists $\eta_0 >0$ with $0<\eta_0 <<
\e$ such that for $0 < \forall \eta <\eta_0$
the restriction of $f$:
\begin{equation}
X \cap B(0;\e) \cap \tl{f}^{-1}(D(0;\eta)
\setminus \{0\}) \longrightarrow
D(0;\eta) \setminus \{0\}
\end{equation}
is a topological fiber bundle over
the punctured disk $D(0;\eta) \setminus
\{0\}:=\{ z \in \CC \ |\ 0<|z|<\eta \}$,
where $B(0;\e)$ is the open ball in
$\CC^m$ with radius $\e$ centered at the origin.
\end{lemma}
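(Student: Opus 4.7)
The plan is to realize the restriction of $f$ as a proper stratified submersion over a small punctured disk $D(0;\eta_0) \setminus \{0\}$ and then invoke Thom's first isotopy theorem to extract the topological fiber bundle structure. This is the classical Milnor--L\^e construction, transplanted from the smooth ambient space $\CC^m$ to the possibly singular subvariety $X$.

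First I would choose a Whitney $(b)$-regular stratification $\SS$ of a neighborhood of $0$ in $X$ such that $X_0 = X \cap \tl{f}^{-1}(0)$ is a union of strata; such stratifications exist for complex analytic sets by Whitney--Verdier. One may further refine so that $f|_{X \setminus X_0}$ becomes a stratified submersion onto $\CC^*$. Next, using the subanalytic curve selection lemma applied to the squared distance function $\rho(x) = \|x\|^2$, I would show that for every sufficiently small $\e > 0$ the sphere $S_\e := \partial B(0;\e)$ is transverse in $\CC^m$ to every stratum of $\SS$. Having fixed such an $\e$, the same curve-selection technique applied to the pair $(f,\rho)$ produces $\eta_0 = \eta_0(\e) > 0$ such that for every $z$ with $0 < |z| < \eta_0$ the fiber $X \cap \tl{f}^{-1}(z)$ meets $S_\e$ transversally stratum by stratum. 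With both transversalities in hand, the restriction
\begin{equation}
f \colon X \cap \overline{B(0;\e)} \cap \tl{f}^{-1}(D(0;\eta_0) \setminus \{0\}) \longrightarrow D(0;\eta_0) \setminus \{0\}
\end{equation}
is a proper stratified submersion, so Thom's first isotopy theorem yields a topological local trivialization, which restricts to the open version stated in the lemma.

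The hard part will be the second transversality step: for a fixed small $\e$ one must ensure that the set of ``bad'' values $z$ — those for which $\tl{f}^{-1}(z) \cap X$ fails to meet $S_\e$ transversally along some stratum — does not accumulate at $0 \in \CC$. The standard fix, going back to Milnor and made precise by L\^e, is to apply the curve selection lemma to the semianalytic locus of stratified critical points of $\rho$ restricted to the fibers of $\tl{f}$ and combine it with a \L ojasiewicz-type inequality to preclude accumulation at $0$; once this is settled the remainder is a mechanical application of Thom's theorem.
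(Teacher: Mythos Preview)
The paper does not prove this lemma at all; it is stated as a citation of L\^e's result \cite[Theorem 1.1]{Le} and used as a black box. So there is nothing to compare your argument against in the paper itself.

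That said, your outline is essentially the standard proof due to L\^e (and in the smooth case, Milnor). A couple of small points are worth tightening. First, the claim that one can refine the stratification so that $f|_{X\setminus X_0}$ is a stratified submersion is not automatic from Whitney regularity alone: you want a stratification for which $f$ satisfies the Thom $a_f$ condition, which for complex analytic $f$ is guaranteed by the Hironaka--L\^e--Teissier theorem. With $a_f$ in hand, the limits of tangent spaces to fibers behave well near $X_0$, and this is what actually makes the second transversality step (fibers $\tl{f}^{-1}(z)$ meeting $S_\e$ transversally along each stratum for all small $z\neq 0$) go through cleanly via curve selection. Second, to obtain the \emph{open} bundle over $D(0;\eta)\setminus\{0\}$ for every $\eta<\eta_0$ from the proper stratified submersion on the closed ball, you should note that Thom--Mather gives a trivialization respecting the boundary stratum $X\cap S_\e$, so the interior restricts to a bundle as well. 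With those two adjustments your sketch is the correct argument.
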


\begin{definition}\label{dfn:2-6}
A fiber of the above fibration is
called the Milnor fiber of the function $f:
 X\longrightarrow \CC$ at $0 \in X_0$
and we denote it by $F_0$.
\end{definition}

\begin{proposition}{\bf(
\cite[Proposition 4.2.2]{Dimca})}\label{prp:2-7}
There exists a natural isomorphism
\begin{equation}
H^j(F_0;\CC) \simeq H^j(\psi_f(\CC_X))_0
\end{equation}
for any $j \in \ZZ$.
\end{proposition}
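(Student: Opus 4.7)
My plan is to unpack the definition of the nearby cycle functor $\psi_f$ and directly compute its stalk at $0$ using the Milnor fibration provided by Lemma \ref{lem:2-5}.

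Recall that $\psi_f$ is constructed via the universal covering $e \colon \widetilde{\CC^*} \longrightarrow \CC^*$: setting $X^* := X \setminus X_0$ and forming the fiber product $\widetilde{X^*} := X^* \times_{\CC^*} \widetilde{\CC^*}$, the composition $\tl{p} \colon \widetilde{X^*} \longrightarrow X$ (through $X^* \hookrightarrow X$) yields, for $\F \in \Db(X)$, the formula
\begin{equation}
\psi_f(\F) \;=\; i^{-1} R\tl{p}_*\tl{p}^{-1} \F,
\end{equation}
where $i \colon X_0 \hookrightarrow X$ is the closed inclusion. Applying this to $\F = \CC_X$ and taking the stalk at $0$, the usual stalk formula for $R\tl{p}_*$ gives
\begin{equation}
H^j(\psi_f(\CC_X))_0 \;\simeq\; \varinjlim_{U \ni 0} H^j(\tl{p}^{-1}(U); \CC),
\end{equation}
the limit running over a fundamental system of open neighborhoods $U$ of $0$ in $X$.

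To evaluate this colimit I would restrict to the cofinal family $U_\e := X \cap B(0;\e)$ with $\e$ as in Lemma \ref{lem:2-5}, together with an associated $\eta_0 > 0$. For $0 < \eta < \eta_0$, the conic structure theorem for analytic sets allows one to deformation-retract $U_\e \setminus X_0$ onto $U_\e \cap \tl{f}^{-1}(D(0;\eta) \setminus \{0\})$, and by Lemma \ref{lem:2-5} the latter is a locally trivial fibration over the punctured disk $D(0;\eta) \setminus \{0\}$ with typical fiber $F_0$. Pulling back this fibration along the universal cover $\widetilde{D(0;\eta) \setminus \{0\}} \longrightarrow D(0;\eta) \setminus \{0\}$ replaces the base by a contractible space, so $\tl{p}^{-1}(U_\e)$ is homotopy equivalent to $F_0$. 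Hence $H^j(\tl{p}^{-1}(U_\e); \CC) \simeq H^j(F_0; \CC)$, and since the transition maps in the direct system are isomorphisms for all sufficiently small $\e$, the colimit equals $H^j(F_0; \CC)$.

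The only delicate point is the conic-retraction step on a possibly singular ambient $X$, which however is standard for closed analytic subsets of $\CC^m$ after choosing $\e$ small relative to a Whitney stratification of $X$; it is precisely what is used in \cite[Proposition 4.2.2]{Dimca}. Everything else is formal manipulation of the definition of $\psi_f$, so I do not anticipate further obstacles.
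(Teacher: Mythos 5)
Your argument is correct, and it is essentially the standard proof of the result the paper simply cites (the paper gives no proof of its own, referring to \cite[Proposition 4.2.2]{Dimca}): unwind $\psi_f=i^{-1}R\tl{p}_*\tl{p}^{-1}$, compute the stalk as a colimit over neighborhoods of $0$, and identify each term with $H^j(F_0;\CC)$ via the local fibration of Lemma \ref{lem:2-5} pulled back to the contractible cover of the punctured disk. The one place where you diverge from the cited argument is your choice of cofinal family: you work with the balls $U_\e=X\cap B(0;\e)$ and therefore need the extra geometric input that $U_\e\setminus X_0$ deformation retracts onto the punctured Milnor tube $U_\e\cap\tl{f}^{-1}(D(0;\eta)\setminus\{0\})$ -- the ``delicate point'' you flag, which is true but requires stratification/conic-structure technology on a singular $X$. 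The standard route avoids this entirely: the full Milnor tubes $X\cap B(0;\e)\cap\tl{f}^{-1}(D(0;\eta))$ with $0<\eta\ll\e$ are themselves a cofinal system of open neighborhoods of $0$ in $X$ (given any $V\ni 0$ choose $\e$ with $U_\e\subset V$, then any $\eta$), so one may compute the colimit directly over tubes, where $\tl{p}^{-1}$ of a tube is the pullback of the fibration of Lemma \ref{lem:2-5} over the contractible set $e^{-1}(D(0;\eta)\setminus\{0\})$, hence homotopy equivalent to $F_0$ with no retraction needed. What your version buys is a statement purely in terms of the ball neighborhoods, at the cost of the conic-structure step; the tube version is more economical. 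In either version, do make explicit that the transition maps of the direct system are isomorphisms because the inclusions of smaller tubes (or balls) into larger ones restrict to homotopy equivalences of the fibers, which is exactly the independence of the Milnor fibration from the choice of $(\e,\eta)$ guaranteed by Lemma \ref{lem:2-5}; you assert this but it is the point that makes the colimit collapse.
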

Recall also that in the above
situation, as in the
case of polynomial functions over
$\CC^n$ (see Milnor \cite{Milnor}), we can
define the Milnor monodromy operators
\begin{equation}
\Phi_{j,0} \colon H^j(F_0;\CC)
\overset{\sim}{\longrightarrow}
H^j(F_0;\CC) \ \qquad \ (j=0,1,\ldots).
\end{equation}
Similarly, also for any $y \in
X_0=\{x \in X \ |\ f(x)=0\}$ we can define
the Milnor fiber $F_y$ and
its monodromies $\Phi_{j,y}$.
The notion of Milnor monodromies can be
also generalized as follows.
Let $\F \in \Dbc(X)$. Then there
exists a monodromy automorphism
\begin{equation}
\Phi(\F) \colon \psi_f(\F) \simto \psi_f(\F)
\end{equation}
of $\psi_f(\F)$ in $\Dbc(X_0)$
(see \cite{Dimca} and \cite{K-S} etc.).

\medskip

Next we recall
Bernstein-Khovanskii-Kushnirenko's
theorem \cite{Khovanskii}.

\begin{definition}
Let $g(x)=\sum_{v \in \ZZ^n}
a_vx^v$ be a Laurent polynomial on $(\CC^*)^n$
($a_v\in \CC$).
\begin{enumerate}
\item We call the convex hull
of $\supp(g):=\{v\in \ZZ^n \ |\ a_v \neq 0\}
\subset \ZZ^n \subset \RR^n$ in $\RR^n$
the Newton polytope of $g$ and
denote it by $NP(g)$.
\item For a vector $u\in \RR^n$, we set
\begin{equation}
\Gamma(g;u):=\left\{ v\in NP(g) \ \left|
\ \langle u,v \rangle =\min_{w \in
NP(g)} \langle u,w \rangle \right. \right\},
\end{equation}
where for $u=(u_1,\ldots,u_n)$ and $v=(v_1,\ldots, v_n)$
we set $\langle
u,v\rangle =\sum_{i=1}^n u_iv_i$.
We call $\Gamma(g;u)$ the supporting face
of $u$ in $NP(g)$.
\item For a vector $u \in \RR^n$,
we define the $u$-part of $g$ by
\begin{equation}
g^u(x):=\dsum_{v \in \Gamma(g;u) \cap
\ZZ^n} a_vx^v.
\end{equation}
\end{enumerate}
\end{definition}

\begin{definition}
Let $g_1, g_2, \ldots , g_p$ be
Laurent polynomials on $(\CC^*)^n$. Then we
say that the subvariety $Z^*=\{ x\in (\CC^*)^n
\ | \ g_1(x)=g_2(x)= \cdots
=g_p(x)=0 \}$ of $(\CC^*)^n$ is
non-degenerate complete intersection if for
any vector $u \in \RR^n$ the $p$-form
$dg_1^u \wedge dg_2^u \wedge \cdots
\wedge dg_p^u$ does not vanish on
$\{ x\in (\CC^*)^n \ |\ g_1^u(x)= \cdots
=g_p^u(x)=0 \}$.
\end{definition}

\begin{theorem}[Bernstein-Khovanskii-Kushnirenko's
theorem, see \cite{Khovanskii} etc.]\label{BKK}
Let $g_1, g_2, \ldots , g_p$ be
Laurent polynomials on $(\CC^*)^n$. Assume
that the subvariety $Z^*=\{ x\in (\CC^*)^n
\ |\ g_1(x)=g_2(x)= \cdots
=g_p(x)=0 \}$ of $(\CC^*)^n$ is
non-degenerate complete intersection. Set
$\Delta_i:=NP(g_i)$ for $i=1,\ldots, p$.
Then the Euler characteristic $\chi (Z^*)$
of $Z^*$ is given by
\begin{equation}
\chi(Z^*)=(-1)^{n-p}\dsum_{\begin{subarray}{c}
a_1,\ldots,a_p \geq 1\\
a_1+\cdots
+a_p=n
\end{subarray}}
MV(
\underbrace{\Delta_1,\ldots,\Delta_1}_{\text{$a_1$-times}},
\ldots,
\underbrace{\Delta_p,\ldots,\Delta_p}_{\text{$a_p$-times}}),
\end{equation}
where
$MV(
\underbrace{\Delta_1,\ldots,\Delta_1}_{\text{$a_1$-times}},
\ldots,
\underbrace{\Delta_p,\ldots,\Delta_p}_{\text{$a_p$-times}})
\in \ZZ$ is the normalized $n$-dimensional
mixed volume
with respect to the lattice $\ZZ^n \subset \RR^n$.
\end{theorem}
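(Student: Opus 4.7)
The plan is to prove the formula by induction on the number $p$ of defining equations. The base case $p = 1$ is the classical Kushnirenko hypersurface formula $\chi(\{g_1 = 0\}) = (-1)^{n-1} n! \Vol_n(\Delta_1)$, which agrees with $(-1)^{n-1} MV(\Delta_1, \ldots, \Delta_1)$ in the normalization of the theorem. The inductive step will use a fibration argument combined with the multilinearity of mixed volumes.

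For the base case, I would pass to a smooth projective toric compactification $X_\Sigma$ of $(\CC^*)^n$ whose fan refines the inner normal fan of $\Delta_1$. By non-degeneracy of $g_1$, the closure $\overline{Z^*} \subset X_\Sigma$ is smooth and meets each toric orbit $\sho_\sigma$ transversally in a non-degenerate hypersurface whose Newton polytope is the face of $\Delta_1$ dual to $\sigma$. The Euler characteristic of $\overline{Z^*}$ is then computed via intersection theory on $X_\Sigma$, and subtracting the boundary contributions $\chi(\overline{Z^*} \cap \sho_\sigma)$ inductively on $\d \sigma$ yields the asserted formula for $\chi(Z^*)$ itself.

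For the induction step, set $Y^* = \{g_1 = \cdots = g_{p-1} = 0\}$, a non-degenerate complete intersection of codimension $p-1$. Additivity of Euler characteristic gives
\begin{equation}
\chi(Z^*) = \chi(Y^*) - \chi(Y^* \setminus Z^*).
\end{equation}
Non-degeneracy of the full system $(g_1, \ldots, g_p)$ ensures that the restriction $g_p\colon Y^* \setminus Z^* \to \CC^*$ is a locally trivial topological fibration off a finite set $S \subset \CC^*$ of critical values, with generic fiber $F_c = Y^* \cap \{g_p = c\}$. For generic $c$, this fiber is itself a non-degenerate complete intersection in $(\CC^*)^n$ defined by $g_1, \ldots, g_{p-1}, g_p - c$. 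Multiplicativity of Euler characteristic over the fibration together with $\chi(\CC^* \setminus S) = -|S|$ then yields a recurrence expressing $\chi(Z^*)$ in terms of Euler characteristics of non-degenerate complete intersections of lower codimension, to which the inductive hypothesis applies.

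It remains to verify that the right-hand side of the stated formula obeys the same recurrence. This reduces to a combinatorial identity for the normalized mixed volume, which follows from Minkowski's theorem expressing $\Vol_n(t_1 \Delta_1 + \cdots + t_p \Delta_p)$ as a polynomial in the $t_i$ whose coefficients are precisely the mixed volumes $MV(\Delta_1^{a_1}, \ldots, \Delta_p^{a_p})$, together with the multilinearity of $MV$. The main obstacle will be the bookkeeping in this verification: one must carefully account for the fact that $NP(g_p - c)$ equals $\mathrm{conv}(\Delta_p \cup \{0\})$ rather than $\Delta_p$ whenever $0 \notin \Delta_p$, and confirm that the contributions of the critical fibers $F_s$ for $s \in S$ are precisely what is needed for the inductive identity to close up. The vanishing $\chi(\CC^*) = 0$ is essential here: it forces $\chi(Y^* \setminus Z^*)$ to be captured entirely by the finitely many critical-fiber corrections, matching the finite combinatorial structure on the mixed volume side.
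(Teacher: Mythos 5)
A preliminary remark: the paper does not prove Theorem \ref{BKK} at all --- it is quoted as a classical result of Khovanskii with a citation --- so your proposal is being measured against the standard proofs in the literature rather than against an argument in the text.

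Your inductive step has a genuine gap, and it sits exactly where the real content of the theorem lies. After writing $\chi(Z^*)=\chi(Y^*)-\chi(Y^*\setminus Z^*)$ and stratifying $g_p\colon Y^*\setminus Z^*\to\CC^*$ over the complement of the atypical values $S$, you get $\chi(Y^*\setminus Z^*)=-\,\sharp S\cdot\chi(F_{\gen})+\sum_{s\in S}\chi(F_s)$, and you never compute either $\sharp S$ or the special-fiber terms $\chi(F_s)$. The fibers $F_s$ over atypical values are \emph{not} non-degenerate complete intersections, so the inductive hypothesis does not apply to them; the total correction $\sum_{s\in S}\bigl(\chi(F_{\gen})-\chi(F_s)\bigr)$ is a global vanishing-cycle/Milnor-number quantity, and atypical values can also arise from behaviour at infinity (jumps of the Euler characteristic not witnessed by critical points of $g_p|_{Y^*}$ inside $(\CC^*)^n$), which must be excluded or accounted for. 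Asserting that these contributions ``are precisely what is needed for the inductive identity to close up'' assumes the conclusion: controlling them by mixed volumes is itself a BKK-type theorem, comparable in difficulty to the statement being proved.

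There is also a structural problem with the recursion. The generic fiber $F_c$ is cut out by $g_1,\ldots,g_{p-1},g_p-c$, i.e.\ by $p$ equations; it has the \emph{same} codimension as $Z^*$, merely with $\Delta_p$ replaced by $\mathrm{conv}(\Delta_p\cup\{0\})$. So your recurrence does not reduce to lower codimension as claimed (only the $\chi(Y^*)$ term does), and without an auxiliary induction on the polytopes it is circular. The classical routes avoid this: Khovanskii computes $\chi$ of the closure of $Z^*$ in a smooth toric compactification adapted to $\Delta_1+\cdots+\Delta_p$ via Chern-class/adjunction computations and then strips off the orbit strata by inclusion--exclusion (the same mechanism you sketch, correctly, for the base case $p=1$); alternatively one can deduce the complete intersection case from the hypersurface case by the Cayley trick, which is in fact the device this paper uses repeatedly in Section \ref{sec:4}. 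Either of these would replace your unproven ``closing up'' step; as written, the induction does not close.
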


\begin{remark}\label{rem:2-13}
Let $Q_1,Q_2,\ldots, Q_n$ be
polytopes in $\RR^n$. Then
their normalized $n$-dimensional mixed volume
$MV(Q_1,Q_2,\ldots,Q_n) \in \RR_+$ is
given by the formula
\begin{equation}
MV(Q_1, Q_2, \ldots , Q_n)
=\frac{1}{n!}\dsum_{k=1}^n (-1)^{n-k}
\left\{ \sum_{\begin{subarray}{c}I\subset
\{1,\ldots,n\}\\ \sharp
I=k\end{subarray}}\Vol_{\ZZ}
\left(\dsum_{i\in I}Q_i\right)\right\},
\end{equation}
where $\Vol_{\ZZ}(*)\in
\ZZ$ is the normalized $n$-dimensional
volume (i.e. the $n!$ times the usual volume).
Note that if $Q_1, \ldots, Q_n$ are integral polytopes
$MV(Q_1,Q_2,\ldots,Q_n) \in \ZZ_+$.
\end{remark}

Finally we shall introduce our
recent results in \cite[Section 2]{M-T-4}.
From now on, let us fix an element
$\tau =(\tau_1,\ldots, \tau_n) \in
T:=(\CC^*)^n$ and let $g$ be a Laurent
polynomial on $(\CC^*)^n$ such that
$Z^*=\{ x\in (\CC^*)^n \ |\ g(x)=0\}$
is non-degenerate and invariant by the
automorphism $l_{\tau} \colon (\CC^*)^n
\underset{\tau
\times}{\simto}(\CC^*)^n$ induced
by the multiplication by $\tau$. Set
$\Delta =NP(g)$ and for simplicity
assume that $\d \Delta=n$. Then there
exists $\beta \in \CC$ such that
$l_{\tau}^*g= g \circ l_{\tau}=\beta g$.
This implies that for any vertex $v$
of $\Delta =NP(g)$ we have
${\tau}^v={\tau}_1^{v_1} \cdots
{\tau}_n^{v_n}=\beta$. Moreover by the
condition $\d \Delta=n$ we see
that $\tau_1, \tau_2, \ldots , \tau_n$ are
roots of unity. For $p,q \geq 0$
and $k \geq 0$, let
$h^{p,q}(H_c^k(Z^*;\CC))$ be the
mixed Hodge number of $H_c^k(Z^*;\CC)$ and set
\begin{equation}
e^{p,q}(Z^*)=\dsum_k (-1)^k
h^{p,q}(H_c^k(Z^*;\CC))
\end{equation}
as in \cite{D-K}. The above automorphism
of $(\CC^*)^n$ induces a morphism
of mixed Hodge structures $l_{\tau}^* :
 H_c^k(Z^*;\CC) \simto
H_c^k(Z^*;\CC)$ and hence $\CC$-linear
transformations on the $(p,q)$-parts
$H_c^k(Z^*;\CC)^{p,q}$ of $H_c^k(Z^*;\CC)$.
For $\alpha \in \CC$, let
$h^{p,q}(H_c^k(Z^*;\CC))_{\alpha}$
be the dimension of the
$\alpha$-eigenspace $H_c^k(Z^*;\CC)_{\alpha}^{p,q}$
of this automorphism of
$H_c^k(Z^*;\CC)^{p,q}$ and set
\begin{equation}
e^{p,q}(Z^*)_{\alpha}=
\dsum_k (-1)^k h^{p,q}(H_c^k(Z^*;\CC))_{\alpha}.
\end{equation}
Since we have $l_{\tau}^r =\id_{Z^*}$
for $r >> 0$, these numbers are zero
unless $\alpha$ is a root of unity.
Moreover we have
\begin{equation}
e^{p,q}(Z^*)=\dsum_{\alpha \in \CC}
e^{p,q}(Z^*)_{\alpha}, \qquad
e^{p,q}(Z^*)_{\alpha}=
e^{q,p}(Z^*)_{\overline{\alpha}}.
\end{equation}
In this situation, along the lines
of Danilov-Khovanskii \cite{D-K} we can
give an algorithm for computing these
numbers $e^{p,q}(Z^*)_{\alpha}$ as
follows. First of all, as in
\cite[Section 3]{D-K} we obtain the following
Lefschetz type theorem.

\begin{proposition}{\bf
(\cite[Proposition 2.6]{M-T-4})}\label{prp:2-15}
For $p,q \geq 0$ such that $p+q >n-1$, we have
\begin{equation}
e^{p,q}(Z^*)_{\alpha}=
\begin{cases}
(-1)^{n+p+1}\binom{n}{p+1} & (
\text{$\alpha=1$ and $p=q$}),\\
 \ \qquad \ 0 & (\text{otherwise}).
\end{cases}
\end{equation}
\end{proposition}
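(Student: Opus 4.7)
The plan is to compare $Z^*$ with the ambient torus $T = (\CC^*)^n$ via a Lefschetz-type theorem and then exploit the fact that the translation $l_\tau$ acts trivially on $H^*(T)$. So first I would invoke the Danilov--Khovanskii Lefschetz theorem for non-degenerate hypersurfaces in tori: the restriction $i^* \colon H^j(T;\CC) \to H^j(Z^*;\CC)$ is an isomorphism of mixed Hodge structures for $j<n-1$ and injective for $j=n-1$. Since both $T$ and $Z^*$ are smooth of dimensions $n$ and $n-1$, Poincar\'e duality dualizes this to the statement that the Gysin map
\begin{equation}
i_* \colon H^j_c(Z^*;\CC) \longrightarrow H^{j+2}_c(T;\CC)(-1)
\end{equation}
is an isomorphism for $j \geq n$ (and surjective for $j=n-1$), compatibly with the $l_\tau$-action.

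Next I would compute the right-hand side explicitly. Since $H^k(T;\CC) = \Lambda^k H^1(T;\CC)$ is pure of type $(k,k)$ of dimension $\binom{n}{k}$, Poincar\'e duality on $T$ gives $H^j_c(T;\CC)$ pure of type $(j-n,j-n)$ with dimension $\binom{n}{2n-j}$, supported in $n \leq j \leq 2n$. Plugging this into the Gysin iso, for $j \geq n$ the space $H^j_c(Z^*;\CC)$ is pure of type $(j+1-n,\,j+1-n)$ and dimension $\binom{n}{2n-j-2}$.

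The decisive point is that $l_\tau \colon T \to T$ is a translation in the connected topological group $T$, hence homotopic to the identity, and therefore acts trivially on $H^*(T;\CC)$ and $H^*_c(T;\CC)$. Via the Gysin iso, $l_\tau^*$ acts trivially on $H^j_c(Z^*;\CC)$ for all $j \geq n$, so the $\alpha$-eigenspaces vanish there whenever $\alpha \neq 1$. For $j \leq n-1$, the weights of $H^j_c(Z^*;\CC)$ are $\leq n-1$, so $h^{p,q}(H^j_c(Z^*;\CC))_\alpha = 0$ as soon as $p+q > n-1$. Combining these two vanishing statements in $e^{p,q}(Z^*)_\alpha = \sum_j (-1)^j h^{p,q}(H^j_c(Z^*;\CC))_\alpha$: if $\alpha \neq 1$ and $p+q>n-1$, every term vanishes; if $\alpha = 1$ and $p+q>n-1$, purity forces $p=q$ and only $j = n+p-1$ contributes, giving
\begin{equation}
e^{p,p}(Z^*)_1 = (-1)^{n+p-1}\binom{n}{2n-(n+p-1)-2} = (-1)^{n+p+1}\binom{n}{p+1},
\end{equation}
exactly as claimed.

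The main technical input, and the only nontrivial obstacle, is the Danilov--Khovanskii Lefschetz theorem in the equivariant mixed Hodge structure setting: one must know that the Gysin iso above respects both the Hodge filtration/weight filtration and the $l_\tau$-action. Once that is granted (as a direct consequence of functoriality of the six-functor formalism for $l_\tau$-equivariant constructible sheaves, or by a toric compactification argument as in \cite{D-K}), everything else is bookkeeping with pure Hodge structures of Tate type and the elementary fact that translations act trivially on the cohomology of a connected topological group.
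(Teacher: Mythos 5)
Your proposal is correct and follows exactly the Lefschetz-type argument from Danilov--Khovanskii \cite[Section~3]{D-K} that the paper invokes for this result (via \cite[Proposition~2.6]{M-T-4}): dualize the weak Lefschetz restriction to a Gysin isomorphism, note that $l_\tau$ is homotopic to the identity on the ambient torus so the $\alpha\neq 1$ eigenspaces die in degrees $\geq n$, and dispose of degrees $\leq n-1$ by the weight bound $\mathrm{wt}\, H^j_c \leq j$. The only slip is notational: the Gysin map should land in $H^{j+2}_c(T)(1)$ rather than $H^{j+2}_c(T)(-1)$, but your subsequent type computation is the correct one.
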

For a vertex $w$ of $\Delta$, consider
the translated polytope
$\Delta^w:=\Delta -w$ such that $0 \prec
\Delta^w$ and ${\tau}^v=1$ for any
vertex $v$ of $\Delta^w$. Then for
$\alpha \in \CC$ and $k \geq 0$ set
\begin{equation}
l^*(k\Delta)_{\alpha}=\sharp \{ v \in
\Int (k\Delta^w) \cap \ZZ^n \ |\
{\tau}^v =\alpha\} \in \ZZ_+:=\ZZ_{\geq 0}
\end{equation}
We can easily see that these numbers
$l^*(k\Delta)_{\alpha}$ do not depend on
the choice of the vertex $w$ of
$\Delta$. Next, define a formal power
series $P_{\alpha}(\Delta;t)=\sum_{i
\geq 0} \varphi_{\alpha, i}(\Delta)t^i$ by
\begin{equation}
P_{\alpha}(\Delta;t)=(1-t)^{n+1}
\left\{ \dsum_{k \geq 0}
l^*(k\Delta)_{\alpha}t^k\right\}.
\end{equation}
Then we can easily show
that $P_{\alpha}(\Delta;t)$ is
actually a polynomial as in
\cite[Section 4.4]{D-K}.

\begin{theorem}{\bf (
\cite[Theorem 2.7]{M-T-4})}\label{thm:2-14}
In the situation as above, we have
\begin{equation}
\dsum_q e^{p,q}(Z^*)_{\alpha}
=\begin{cases}
(-1)^{p+n+1}\binom{n}{p+1} +(-1)^{n+1}
\varphi_{\alpha, n-p}(\Delta) &
(\alpha=1), \\
(-1)^{n+1} \varphi_{\alpha, n-p}
(\Delta) & (\alpha \neq 1)
\end{cases}
\end{equation}
(we used the convention $\binom{a}{b}=0$
($0 \leq a <b$) for binomial
coefficients).
\end{theorem}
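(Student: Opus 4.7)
The plan is to adapt the Danilov--Khovanskii computation of the Hodge--Deligne $E$-polynomial of a non-degenerate toric hypersurface (see \cite[Section~4]{D-K}) to the equivariant setting by tracking the $l_\tau^*$-eigenspace decomposition throughout. As a first step, I would choose a smooth projective toric variety $X_\Sigma$ corresponding to a regular refinement of the normal fan of $\Delta$, so that the closure $\bar Z$ of $Z^*$ in $X_\Sigma$ is smooth projective and meets every torus orbit $O_\sigma \subset X_\Sigma$ transversally. The multiplication $l_\tau$ extends uniquely to $X_\Sigma$ since it fixes the fan, and because $l_\tau^* g = \beta g$, the closure $\bar Z$ and each intersection $Z_\sigma := \bar Z \cap O_\sigma$ are $l_\tau$-invariant. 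Each $Z_\sigma$ is itself a non-degenerate $l_\tau$-invariant hypersurface in a smaller torus, whose Newton polytope is the face of $\Delta$ dual to $\sigma$.

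Next, by additivity of the $l_\tau^*$-equivariant Hodge--Deligne polynomial on the locally closed stratification $\bar Z = Z^* \sqcup \bigsqcup_\sigma Z_\sigma$, one has
\begin{equation*}
\sum_q e^{p,q}(\bar Z)_\alpha \;=\; \sum_q e^{p,q}(Z^*)_\alpha \;+\; \sum_\sigma \sum_q e^{p,q}(Z_\sigma)_\alpha .
\end{equation*}
Since $\bar Z$ is smooth projective its Hodge structure is pure, so Hodge symmetry together with Proposition \ref{prp:2-15} (applied to $Z^*$) pins down all Hodge numbers in the range $p+q > n-1$. Inducting on $\d \Delta$, the lower-dimensional terms $\sum_q e^{p,q}(Z_\sigma)_\alpha$ are expressed via the polynomials $P_\alpha(\gamma_\sigma;t)$ for the faces $\gamma_\sigma \prec \Delta$, so it suffices to compute $\sum_q e^{p,q}(\bar Z)_\alpha$.

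The final ingredient is the $\tau$-equivariant version of Danilov--Khovanskii's Ehrhart / Riemann--Roch computation on $X_\Sigma$, which expresses the generating series of equivariant holomorphic Euler characteristics $\chi_\alpha(X_\Sigma, \mathcal{O}(k\Delta))$, $k \geq 0$, as a rational function whose numerator encodes $\sum_k l^*(k\Delta)_\alpha t^k$; independence of $l^*(k\Delta)_\alpha$ from the choice of vertex $w$ follows from $\tau^v = 1$ on the vertices of $\Delta^w$. Multiplying by $(1-t)^{n+1}$ converts this series into the polynomial $P_\alpha(\Delta;t)$, and matching coefficient by coefficient against the Hodge-theoretic side yields the desired formula. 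The correction $(-1)^{p+n+1}\binom{n}{p+1}$ for $\alpha = 1$ records the contribution of powers of the hyperplane class on $\bar Z$, on which $l_\tau^*$ acts as the identity.

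The main technical obstacle is the bookkeeping of $\alpha$-eigenspaces through both the stratification induction and the equivariant Riemann--Roch step: one must verify that the cohomology of each auxiliary line bundle on $X_\Sigma$ is $l_\tau^*$-semisimple with the claimed weight multiplicities, and that Danilov--Khovanskii's inversion of the Ehrhart series commutes with taking $\alpha$-eigenspaces.
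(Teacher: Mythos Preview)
The paper does not prove this theorem: it is simply quoted verbatim from \cite[Theorem~2.7]{M-T-4}, with no argument given here. So there is no ``paper's own proof'' to compare against in this document.

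That said, your outline is the natural and expected one---it is exactly the equivariant refinement of the Danilov--Khovanskii argument \cite[Section~4]{D-K} that one would anticipate \cite{M-T-4} carries out. The key points (smooth toric compactification, $l_\tau$-invariance of the orbit stratification, additivity of the equivariant $E$-polynomial, induction on $\dim\Delta$, and the identification of $P_\alpha(\Delta;t)$ via lattice-point generating series) are all correctly identified. One small terminological note: what you call ``equivariant Riemann--Roch'' is in practice just the toric identification $H^0(X_\Sigma,\mathcal{O}(k\Delta)) \simeq \bigoplus_{v\in k\Delta\cap\ZZ^n}\CC\cdot x^v$ together with the obvious $l_\tau^*$-weight decomposition $x^v\mapsto \tau^v x^v$; no genuine Riemann--Roch machinery is needed, and semisimplicity is automatic since $l_\tau$ has finite order. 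With that simplification the ``main technical obstacle'' you flag largely evaporates.
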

By Proposition \ref{prp:2-15} and Theorem
\ref{thm:2-14}, we obtain an
algorithm to calculate the numbers
$e^{p,q}(Z^*)_{\alpha}$ of the
non-degenerate hypersurface $Z^* \subset
(\CC^*)^n$ for any $\alpha \in \CC$
as in \cite[Section 5.2]{D-K}. Indeed
for a projective toric
compactification $X$ of $(\CC^*)^n$ such
that the closure $\overline{Z^*}$
of $Z^*$ in $X$ is smooth, the variety
$\overline{Z^*}$ is smooth projective
and hence there exists a perfect pairing
\begin{equation}
H^{p,q}(\overline{Z^*};\CC)_{\alpha}
\times H^{n-1-p,
n-1-q}(\overline{Z^*};\CC)_{\alpha^{-1}}
\longrightarrow \CC
\end{equation}
for any $p,q \geq 0$ and $\alpha \in \CC^*$
(see for example \cite[Section 5.3.2]{Voisin}).
Therefore, we obtain equalities
$e^{p,q}(\overline{Z^*})_{\alpha}=
e^{n-1-p,n-1-q}(\overline{Z^*})_{\alpha^{-1}}$
which are necessary to proceed the
algorithm in \cite[Section 5.2]{D-K}.
The following notion is very useful to
construct such compactifications
of $(\CC^*)^n$.

\begin{definition}\label{MJR}(\cite{D-K})
\begin{enumerate}
\item
Let $\Delta$ be an $n$-dimensional integral
polytope in $(\RR^n, \ZZ^n)$.
For a vertex $w$ of $\Delta$, we define
a closed convex cone $\Con (\Delta,
w)$ by $\Con (\Delta,w)=\{ r \cdot (v -w)
\ |\ r \in \RR_+, \ v \in \Delta \}
\subset \RR^n$.
\item
Let $\Delta$ and $\Delta^{\prime}$
be two $n$-dimensional integral polytopes
in $(\RR^n, \ZZ^n)$. We denote by
${\rm som}(\Delta )$ (resp.
${\rm som}(\Delta^{\prime})$) the set of
vertices of $\Delta$ (resp. $\Delta^{\prime}$).
Then we say that
$\Delta^{\prime}$ majorizes $\Delta$ if
there exists a map $\Psi \colon {\rm
som}(\Delta^{\prime}) \longrightarrow
{\rm som}(\Delta)$ such that
$\Con(\Delta, \Psi(w)) \subset
\Con(\Delta^{\prime}, w)$ for any
$w \in {\rm som}(\Delta^{\prime})$.
\end{enumerate}
\end{definition}
Note that if $\Delta^{\prime}$ majorizes $\Delta$
the map $\Psi \colon {\rm
som}(\Delta^{\prime}) \longrightarrow
{\rm som}(\Delta)$ is unique (see \cite{D-K}).
For an $n$-dimensional integral polytope $\Delta$
in $(\RR^n, \ZZ^n)$, we denote by
$X_{\Delta}$ the (projective) toric variety
associated with the dual fan of $\Delta$.
Recall also that if $\Delta^{\prime}$
majorizes $\Delta$ then there exists a natural
morphism $X_{\Delta^{\prime}}
\longrightarrow X_{\Delta}$. Now we return
to the original situation.
For $\alpha \in \CC$ we define the $\alpha$-Euler
characteristic $\chi(Z^*)_{\alpha} \in \ZZ$
of $Z^* \subset (\CC^*)^n$ by
\begin{equation}
\chi(Z^*)_{\alpha}=
\dsum_{p,q} e^{p,q}(Z^*)_{\alpha}.
\end{equation}
Then we have the following result.
\begin{proposition}\label{EU}
For any $\alpha \in \CC$ we have
\begin{equation}
\chi(Z^*)_{\alpha}
=\begin{cases}
(-1)^{n-1}+ \sum_{k=1}^n(-1)^{k+1}
\binom{n}{k} l^*(k \Delta )_{\alpha}
 & (\alpha=1), \\
\sum_{k=1}^n(-1)^{k+1}
\binom{n}{k} l^*(k \Delta )_{\alpha}
 & (\alpha \neq 1).
\end{cases}
\end{equation}
\end{proposition}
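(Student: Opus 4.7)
The strategy is to obtain Proposition \ref{EU} as a direct corollary of Theorem \ref{thm:2-14} by summing over $p$ and converting the generating-function identity that defines $\varphi_{\alpha,i}(\Delta)$ back into a statement about $l^*(k\Delta)_\alpha$. Since $\chi(Z^*)_\alpha=\sum_{p,q}e^{p,q}(Z^*)_\alpha$ and $Z^*$ is a hypersurface of dimension $n-1$ in the torus, only $0\leq p\leq n-1$ contribute, so
\begin{equation}
\chi(Z^*)_\alpha=\dsum_{p=0}^{n-1}\Bigl(\dsum_{q}e^{p,q}(Z^*)_\alpha\Bigr).
\end{equation}
Substituting the formula from Theorem \ref{thm:2-14} yields, for $\alpha\neq 1$,
$\chi(Z^*)_\alpha=(-1)^{n+1}\sum_{p=0}^{n-1}\varphi_{\alpha,n-p}(\Delta)=(-1)^{n+1}\sum_{i=1}^{n}\varphi_{\alpha,i}(\Delta)$,
while for $\alpha=1$ there is an additional contribution
$\sum_{p=0}^{n-1}(-1)^{p+n+1}\binom{n}{p+1}$.

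The $\alpha=1$ correction term is evaluated by a standard binomial manipulation: reindexing $j=p+1$ gives $(-1)^{n}\sum_{j=1}^{n}(-1)^{j}\binom{n}{j}=(-1)^{n}\bigl((1-1)^{n}-1\bigr)=(-1)^{n+1}=(-1)^{n-1}$, which supplies precisely the extra summand appearing in the proposition.

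The main work is to rewrite $\sum_{i=1}^{n}\varphi_{\alpha,i}(\Delta)$ in terms of $l^*(k\Delta)_\alpha$. From the defining relation $P_\alpha(\Delta;t)=(1-t)^{n+1}\sum_{k\geq 0}l^*(k\Delta)_\alpha t^{k}$ one reads off
\begin{equation}
\varphi_{\alpha,i}(\Delta)=\dsum_{k=0}^{i}(-1)^{i-k}\binom{n+1}{i-k}l^*(k\Delta)_\alpha.
\end{equation}
Swapping the order of summation, using $l^*(0\cdot\Delta)_\alpha=0$ (the interior of a point is empty) to discard the $k=0$ contribution, and applying the elementary identity $\sum_{j=0}^{m}(-1)^{j}\binom{n+1}{j}=(-1)^{m}\binom{n}{m}$ (proved by the Pascal recursion $\binom{n+1}{j}=\binom{n}{j}+\binom{n}{j-1}$, which makes the sum telescope) turns the inner sum at each $k\geq 1$ into $(-1)^{n-k}\binom{n}{k}$. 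Thus
\begin{equation}
\dsum_{i=1}^{n}\varphi_{\alpha,i}(\Delta)=\dsum_{k=1}^{n}(-1)^{n-k}\binom{n}{k}l^*(k\Delta)_\alpha.
\end{equation}

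Multiplying by $(-1)^{n+1}$ collapses the sign to $(-1)^{k+1}$ and produces exactly the right-hand side in Proposition \ref{EU}, together with the extra $(-1)^{n-1}$ for $\alpha=1$ obtained above. The only delicate point is keeping track of the range of $p$ and verifying that the boundary values $i=0$ and $i>n$ do not enter (the former because $l^*(0)_\alpha=0$, the latter because Theorem \ref{thm:2-14} only constrains $i=n-p$ with $0\leq p\leq n-1$); once this bookkeeping is in hand the proof is a purely formal combinatorial computation.
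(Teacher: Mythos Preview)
Your proof is correct and follows essentially the same route as the paper's own argument: sum Theorem~\ref{thm:2-14} over $p$, evaluate the alternating binomial sum giving the extra $(-1)^{n-1}$ when $\alpha=1$, and then convert $\sum_i\varphi_{\alpha,i}(\Delta)$ into $\sum_k(-1)^{n-k}\binom{n}{k}l^*(k\Delta)_\alpha$ via the telescoping identity $\sum_{j=0}^{m}(-1)^j\binom{n+1}{j}=(-1)^m\binom{n}{m}$. The paper's proof is simply a terser version of yours; the only cosmetic difference is that the paper keeps the $j=0$ term $\varphi_{\alpha,0}(\Delta)$ in the sum (it vanishes since $l^*(0\cdot\Delta)_\alpha=0$, exactly as you note), whereas you start the sum at $i=1$.
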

\begin{proof}
By Theorem \ref{thm:2-14} we have
\begin{equation}
\chi(Z^*)_{\alpha}
=\begin{cases}
(-1)^{n+1} \{ 1+ \varphi_{1, 0}(\Delta) +
\cdots + \varphi_{1, n}(\Delta) \} &
(\alpha=1), \\
(-1)^{n+1} \{ \varphi_{\alpha , 0}(\Delta) +
\cdots + \varphi_{\alpha ,
n}(\Delta)\} & (\alpha \neq 1).
\end{cases}
\end{equation}
Then the result follows from
\begin{eqnarray}
\sum_{j=0}^n \varphi_{\alpha ,j}(\Delta)
& = &
\sum_{j=0}^n \sum_{i=0}^j
(-1)^i \binom{n+1}{i}
l^*((j-i)\Delta)_{\alpha}
\\
& = &
\sum_{k=0}^n (-1)^{n-k} \binom{n}{k}
l^*(k \Delta )_{\alpha}.
\end{eqnarray}
\end{proof}
From now on, assume also that for any vertex
$v$ of $\Delta$ we have $\tau^v=1$. Let $L_{\tau}
\subset \ZZ^n$ be the sublattice of $\ZZ^n$
defined by $L_{\tau}=\{ v \in \ZZ^n \ | \
 \tau^v=1 \}$. For an integral polytope
$\Box$ in $\RR^n$ we set $\natural (\Box)=
(-1)^{\d \Box} \sharp \{ \relint (\Box) \cap
L_{\tau}\}$, where $\relint (\Box)$ is the
relative interior of $\Box$.
Note that if $\d \Box =0$ then
we have $\natural (\Box)=1$ or $0$ depending
on whether $\Box$ is a point in $L_{\tau}$
or not. Then for any $\alpha \in \CC$ such
that $\{ v \in \ZZ^n \ | \
 \tau^v=\alpha \} \not= \emptyset$, by
taking a point $w(\alpha) \in
\{ v \in \ZZ^n \ | \  \tau^v=\alpha \}$,
Proposition \ref{EU} can be rewritten as
follows.
\begin{proposition}\label{EUP}
For any $\alpha \in \CC$ we have
\begin{equation}
(-1)^{n-1} \chi(Z^*)_{\alpha} =
\sum_{k=0}^n (-1)^{k} \binom{n}{k}
\cdot \natural (k \Delta -w(\alpha)).
\end{equation}
\end{proposition}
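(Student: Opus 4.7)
The plan is to rewrite the formula of Proposition \ref{EU} in terms of the quantity $\natural(k\Delta-w(\alpha))$ by directly unwinding its definition, exploiting the standing assumption that $\tau^w=1$ for every vertex $w$ of $\Delta$.

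First I would treat the terms with $k\geq 1$. Since $k\Delta-w(\alpha)$ is $n$-dimensional for such $k$, the definition gives
\begin{equation}
\natural\bigl(k\Delta-w(\alpha)\bigr)=(-1)^{n}\sharp\bigl\{v\in\Int\bigl(k\Delta-w(\alpha)\bigr)\cap L_{\tau}\bigr\}.
\end{equation}
The substitution $u=v+w(\alpha)$ identifies this set with the set of $u\in\Int(k\Delta)\cap\ZZ^{n}$ satisfying $\tau^{u}=\tau^{v}\cdot\tau^{w(\alpha)}=1\cdot\alpha=\alpha$. On the other hand, fix any vertex $w$ of $\Delta$ used in the definition of $l^{*}(k\Delta)_{\alpha}$. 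Since $\tau^{w}=1$ by hypothesis, the map $v\mapsto u=v+kw$ gives a bijection between $\{v\in\Int(k\Delta^{w})\cap\ZZ^{n}\mid\tau^{v}=\alpha\}$ and $\{u\in\Int(k\Delta)\cap\ZZ^{n}\mid\tau^{u}=\alpha\}$. Hence
\begin{equation}
\natural\bigl(k\Delta-w(\alpha)\bigr)=(-1)^{n}\,l^{*}(k\Delta)_{\alpha}\qquad(k\geq 1).
\end{equation}

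Next I handle the boundary term $k=0$: the polytope $0\Delta-w(\alpha)=\{-w(\alpha)\}$ is zero-dimensional, so $\natural(\{-w(\alpha)\})=1$ if $-w(\alpha)\in L_{\tau}$ and $0$ otherwise, i.e. it equals $1$ exactly when $\alpha=\tau^{w(\alpha)}=1$.

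Finally I plug these two computations into the right-hand side of \eqref{EUP}. In the case $\alpha\neq 1$ the $k=0$ term vanishes, and we get
\begin{equation}
\sum_{k=0}^{n}(-1)^{k}\binom{n}{k}\natural\bigl(k\Delta-w(\alpha)\bigr)=(-1)^{n}\sum_{k=1}^{n}(-1)^{k}\binom{n}{k}l^{*}(k\Delta)_{\alpha}=(-1)^{n-1}\chi(Z^{*})_{\alpha}
\end{equation}
by Proposition \ref{EU}. In the case $\alpha=1$ the $k=0$ term contributes $1$, which precisely accounts for the extra summand $(-1)^{n-1}\cdot(-1)^{n-1}=1$ coming from the $(-1)^{n-1}$ in the first line of Proposition \ref{EU}. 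There is no real obstacle here; the only point that requires care is checking the sign $(-1)^{\d\Box}$ in the definition of $\natural$ and verifying that the translation invariance used to compare $\Int(k\Delta^{w})$ with $\Int(k\Delta)$ really does rely on $\tau^{w}=1$, which is guaranteed by the assumption preceding the proposition.
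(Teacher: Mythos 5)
Your proof is correct and takes the same route the paper (implicitly) does: the paper offers no separate argument for Proposition \ref{EUP}, presenting it as a direct rewriting of Proposition \ref{EU} under the standing hypothesis that $\tau^{v}=1$ for every vertex $v$ of $\Delta$. Your unwinding of the definitions — the translation identification $\natural(k\Delta-w(\alpha))=(-1)^{n}\,l^{*}(k\Delta)_{\alpha}$ for $k\geq 1$, together with the $k=0$ term contributing $1$ exactly when $\alpha=1$ and thereby absorbing the $(-1)^{n-1}$ in the first branch of Proposition \ref{EU} — is precisely the calculation the paper leaves to the reader.
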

More generally, for any subvariety $Y^*
\subset (\CC^*)^n$ which is invariant by
$l_{\tau} \colon (\CC^*)^n \simto (\CC^*)^n$,
$p,q \geq 0$ and $\alpha \in \CC$ we can
define $e^{p,q}(Y^*)_{\alpha} \in \ZZ$
satisfying similar properties. For example,
fix integral polytopes $\Delta_1, \ldots, \Delta_k$
in $\RR^n$ whose all vertices lie in $L_{\tau}$
and set $N=\sum_{j=1}^k \sharp (\Delta_j \cap
L_{\tau})$. Let $S \simeq \CC^N$ be the
set of $k$-tuples $(g_1, \ldots, g_k)$ of
Laurent polynomials on $(\CC^*)^n$ such
that $\supp g_j \subset \Delta_j \cap
L_{\tau}$. Here we regard $S$ as
the affine space $\CC^N$
consisting of the coefficients of
$(g_1, \ldots, g_k)$.
Let $S_{\gen}$ be the subset of $S$ consisting of
$k$-tuples $(g_1, \ldots, g_k)$ such that
$NP(g_j)=\Delta_j$ and $Z(g_1, \ldots, g_k)^*:=
\{ x \in (\CC^*)^n \ | \
g_1(x)= \cdots =g_k(x)=0 \}$ is
a non-degenerate C.I. Then $S_{\gen}$
is open dense in
$S \simeq \CC^N$ and
for any $(g_1, \ldots, g_k) \in S_{\gen}$
the C.I. subvariety
$Z(g_1, \ldots, g_k)^* \subset (\CC^*)^n$
is invariant by
$l_{\tau}$. Hence the numbers
$e^{p,q}(Z(g_1, \ldots, g_k)^*
)_{\alpha} \in \ZZ$ are defined.
\begin{lemma}\label{INV}
For any $p,q \geq 0$ and $\alpha \in \CC$ the
integer $e^{p,q}(Z(g_1, \ldots, g_k)^*
)_{\alpha}$ does not depend on
$(g_1, \ldots, g_k) \in S_{\gen}$.
\end{lemma}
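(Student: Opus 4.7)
The plan is to realize $S_{\gen}$, up to a Zariski open dense subset, as the base of a smooth projective family of pairs (closure of a complete intersection, toric boundary) that carries a fiberwise $l_{\tau}$-action, and then to appeal to the constancy of equivariant mixed Hodge numbers in a variation of mixed Hodge structure over a connected base.

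I would first fix a smooth projective toric variety $X_{\Sigma}$ whose fan $\Sigma$ is a smooth refinement of the dual fan of $\Delta_{1}+\cdots+\Delta_{k}$, chosen fine enough that, by a Bertini-type genericity argument applied to each torus orbit of $X_{\Sigma}$, for $(g_{1},\ldots,g_{k})$ in some Zariski open dense subset $S'_{\gen}\subset S_{\gen}$ the schematic closure $\overline{Z(g_{1},\ldots,g_{k})^{*}}$ in $X_{\Sigma}$ is smooth and meets every toric stratum transversally. The action of $(\CC^{*})^{n}$ on itself extends canonically to $X_{\Sigma}$, so multiplication by $\tau$ extends to an automorphism $\widetilde{l_{\tau}}$ of $X_{\Sigma}$ preserving each closure $\overline{Z^{*}}$ as well as the toric boundary divisor $D_{\Sigma}:=X_{\Sigma}\setminus(\CC^{*})^{n}$.

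Let $\pi:\mathcal{Z}\longrightarrow S'_{\gen}$ be the universal family whose fiber over $s=(g_{1},\ldots,g_{k})$ is $\overline{Z(g_{1},\ldots,g_{k})^{*}}$, and let $\mathcal{D}\subset\mathcal{Z}$ be the restriction of $S'_{\gen}\times D_{\Sigma}$. By construction $\pi$ is smooth and projective, $\mathcal{D}$ is a relative simple normal crossings divisor, and $\widetilde{l_{\tau}}$ acts fiberwise on these data. Since $H_{c}^{j}(Z^{*};\CC)$ is functorially isomorphic to the relative cohomology $H^{j}(\overline{Z^{*}},\overline{Z^{*}}\cap D_{\Sigma};\CC)$ as a mixed Hodge structure, the fiberwise relative cohomology groups assemble into a variation of mixed Hodge structure over $S'_{\gen}$ by Deligne's theory, on which $\widetilde{l_{\tau}}$ acts by an endomorphism of variations. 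The rank and Hodge numbers of its eigen sub-variation for the eigenvalue $\alpha$ are locally constant, and hence $e^{p,q}(Z(g_{1},\ldots,g_{k})^{*})_{\alpha}$ is constant on the connected base $S'_{\gen}$.

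To promote this to constancy on all of $S_{\gen}$, I would let $\Sigma$ vary: for any prescribed $s_{0}\in S_{\gen}$ a sufficiently fine smooth refinement of the dual fan of $\Delta_{1}+\cdots+\Delta_{k}$ makes the closure at $s_{0}$ smooth and transverse to every toric stratum, yielding by the previous argument a Zariski open neighborhood of $s_{0}$ in $S_{\gen}$ on which $e^{p,q}(\cdot)_{\alpha}$ is constant. As $S_{\gen}$ is a nonempty Zariski open subset of the irreducible variety $S\simeq\CC^{N}$, and hence connected, the locally constant function $s\longmapsto e^{p,q}(Z(g_{1},\ldots,g_{k})^{*})_{\alpha}$ must in fact be globally constant. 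The main technical point is the construction of the simultaneous smooth normal crossings toric compactification: this is handled by the standard Danilov--Khovanskii toric compactification refined by toric resolution of singularities, combined with the Bertini-type transversality of non-degenerate complete intersections with respect to every toric orbit.
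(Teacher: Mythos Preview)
Your argument is correct, and it follows the same overall strategy as the paper --- compactify in a smooth toric variety, put the closures in a family, and read off constancy of equivariant Hodge numbers from a variation of Hodge structure. There are, however, two genuine differences worth noting.

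First, you work directly with the mixed Hodge structure on $H^{j}_{c}(Z^{*};\CC)$ via the identification with $H^{j}(\overline{Z^{*}},\overline{Z^{*}}\cap D_{\Sigma};\CC)$, and assemble these into a variation of \emph{mixed} Hodge structure over the base. The paper instead uses only the \emph{pure} variation $\Hh^{i}=R^{i}\pi_{*}\Omega^{\cdot}_{\overline{Z^{*}}/S_{\gen}}$ on the smooth projective closures, invokes the classical result that its Hodge bundles $F^{p}\Hh^{i}$ are locally free (hence so are their $\alpha$-eigen summands under the finite-order automorphism), and then passes from $e^{p,q}(\overline{Z^{*}})_{\alpha}$ to $e^{p,q}(Z^{*})_{\alpha}$ by an induction on $n$ via the toric boundary strata. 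Your route avoids that induction at the cost of invoking the existence of admissible variations of mixed Hodge structure for families of normal-crossing pairs; the paper's route stays within pure Hodge theory (Voisin, Thm.~10.10) but needs the extra inductive step.

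Second, your restriction to a smaller open $S'_{\gen}\subset S_{\gen}$ and the subsequent ``vary $\Sigma$'' argument are unnecessary. Once $\Sigma$ is chosen to be a smooth refinement of the dual fan of $\Delta_{1}+\cdots+\Delta_{k}$, non-degeneracy of $(g_{1},\ldots,g_{k})$ is \emph{exactly} the condition that $\overline{Z(g_{1},\ldots,g_{k})^{*}}$ is smooth and transverse to every torus orbit of $X_{\Sigma}$; so a single $\Sigma$ already gives a smooth projective family over all of $S_{\gen}$, as the paper uses. (The paper also first reduces to the case $\dim(\Delta_{1}+\cdots+\Delta_{k})=n$ by splitting off a torus factor, which makes the normal-fan construction cleaner.) Your local-constancy-plus-connectedness patching is valid but superfluous.
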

\begin{proof}
Set $m= \dim (\Delta_1+ \cdots +\Delta_k)$.
Then for any $(g_1, \ldots, g_k) \in S_{\gen}$
we have $Z(g_1, \ldots, g_k)^* \simeq
(\CC^*)^{n-m} \times Z^{\prime}(g_1, \ldots, g_k)^*$
for a non-degenerate complete intersection
$Z^{\prime}(g_1, \ldots, g_k)^* \subset
(\CC^*)^m$ and $l_{\tau} : Z(g_1, \ldots, g_k)^*
\simto Z(g_1, \ldots, g_k)^*$ is homotopic
to $\id_{(\CC^*)^{n-m}} \times l_{\tau^{\prime}}$
for some $\tau^{\prime} \in (\CC^*)^m$.
So we may assume that
$\dim (\Delta_1+ \cdots +\Delta_k)=n$ from
the first. Let $\Delta$ be an integral polytope
in $\RR^n$ which majorizes $\Delta_1+
\cdots +\Delta_k$. Then by subdividing the
dual fan $\Sigma_1$ of $\Delta$ in $\RR^n$
we obtain a complete fan $\Sigma$ such that the
toric variety $X_{\Sigma}$ associated to it
is a smooth compactification of $(\CC^*)^n$.
By construction, the closure
$\overline{Z(g_1, \ldots, g_k)^*}$ of
$Z(g_1, \ldots, g_k)^*$ in $X_{\Sigma}$
is smooth for any $(g_1, \ldots, g_k)
\in S_{\gen}$, and hence we obtain a
family $\pi : \overline{Z^*}
\longrightarrow S_{\gen}$ of smooth
projective varieties over $S_{\gen}$.
By using the relative de Rham complex
$\Omega^{\cdot}_{\overline{Z^*}/S_{\gen}}$
of $\pi : \overline{Z^*}
\longrightarrow S_{\gen}$, for each $i \geq 0$
we obtain a holomorphic variation
$\Hh^i=R^i \pi_*(
\Omega^{\cdot}_{\overline{Z^*}/S_{\gen}})$
of (pure) Hodge structures on $S_{\gen}$.
Its Hodge filtration $F^p \Hh^i \subset \Hh^i$
($p \geq 0$) is defined by
$F^p \Hh^i=R^i \pi_*(
\Omega^{\geqslant p}_{\overline{Z^*}/S_{\gen}})$
(see \cite[Section 10.2.1]{Voisin} etc.). For
$p \geq 0$ let
\begin{equation}
\Phi (p): F^p \Hh^i \longrightarrow F^p \Hh^i
\end{equation}
be the $\sho_{S_{\gen}}$-linear endomorphism
of the locally free $\sho_{S_{\gen}}$-module
$F^p \Hh^i$ induced by the pull-back of
$\Omega^{\geqslant p}_{\overline{Z^*}/S_{\gen}}$
by $l_{\tau} \times \id_{S_{\gen}}$. Then
by $(\Phi (p))^N= \id$ ($N>>0$) we obtain
a decomposition
\begin{equation}
F^p \Hh^i =\bigoplus_{\alpha \in \CC^*}
(F^p \Hh^i)_{\alpha}
\end{equation}
of $F^p \Hh^i$ into the eigenspaces
of $\Phi (p)$. Hence $(F^p \Hh^i)_{\alpha}$
are also locally free over $\sho_{S_{\gen}}$.
Then by
\cite[Theorem 10.10]{Voisin} the functions
$e^{p,q}(\overline{Z(g_1, \ldots, g_k)^*}
)_{\alpha}$ on $S_{\gen}$ are constant.
Moreover we can easily prove a similar
statement also for $e^{p,q}(Z(g_1,
\ldots, g_k)^* )_{\alpha}$ by
induction on $n$.
\end{proof}

\section{Combinatorial results and their
applications}\label{sec:3}

In this section, we shall describe the $\alpha$-Euler
characteristic $\chi(Z^*)_{\alpha} \in \ZZ$
of the non-degenerate hypersurface
$Z^* \subset (\CC^*)^n$ introduced in Section \ref{sec:2}
in terms of the volume of
its Newton polytope $\Delta$. For this purpose,
we first consider the following more general
situation. Let $L \subset \ZZ^n$ be a sublattice
of rank $n$. For a bounded subset $A$ of $\RR^n$
(resp. a polytope $\Box$ in $\RR^n$)
we set $\sharp A=\sharp (A \cap L) \in \ZZ_+$
(resp. $\natural (\Box)=(-1)^{\d \Box}
\sharp \{ \relint (\Box) \cap
L \} \in \ZZ$) for short. Let $\Delta_1, \Delta_2,
\ldots, \Delta_n$ be integral polytopes in $\RR^n$
whose all vertices lie in $L$. For a subset
$J \subset \{ 1,2, \ldots, n\}$ we set $\Delta_J
=\sum_{j \in J}\Delta_j$. In particular, for
$J = \emptyset$ we set $\Delta_J=\{ 0\}$.
Then the following result is well-known.
\begin{theorem}\label{KH} (Khovanskii
\cite{Khovanskii})
In the situation as above, we have
\begin{equation}
\dsum_{J \subset \{ 1,2, \ldots, n\} }
(-1)^{\sharp J}
\natural (\Delta_J)=MV(\Delta_1, \Delta_2,
\ldots, \Delta_n),
\end{equation}
where $MV(\Delta_1,
\ldots, \Delta_n) \in \ZZ_+$ is the
normalized $n$-dimensional mixed volume
of $\Delta_1, \Delta_2,
\ldots, \Delta_n$ with respect to the
lattice $L$.
\end{theorem}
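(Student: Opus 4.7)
The strategy is to pass through the mixed Ehrhart polynomial and apply Ehrhart reciprocity to collapse the alternating sum onto a single monomial coefficient. Define $H\colon \ZZ_{>0}^n \to \ZZ$ by
\[
H(t_1,\ldots,t_n)=\sharp(t_1\Delta_1+\cdots+t_n\Delta_n).
\]
The first step is to invoke the mixed Ehrhart polynomial theorem (McMullen): $H$ extends uniquely to a polynomial in $t_1,\ldots,t_n$ of total degree at most $n$, and its homogeneous degree-$n$ component coincides with the volume polynomial $\Vol(t_1\Delta_1+\cdots+t_n\Delta_n)$ computed with respect to $L$. By the polarization identity for mixed volumes, the coefficient of $t_1t_2\cdots t_n$ in this volume polynomial is exactly $MV(\Delta_1,\ldots,\Delta_n)$; hence the same monomial coefficient in $H$ equals $MV(\Delta_1,\ldots,\Delta_n)$.

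Next I would connect $H$ to the numbers $\natural(\Delta_J)$. For any $J\subset\{1,\ldots,n\}$, restricting $H$ to the ray $\{t\chi_J:t\in\ZZ_{>0}\}$, where $\chi_J$ is the indicator vector of $J$, gives $H(t\chi_J)=\sharp(t\Delta_J)=L_{\Delta_J}(t)$, the ordinary Ehrhart polynomial of $\Delta_J$. Ehrhart reciprocity, valid for lattice polytopes of arbitrary dimension, then yields
\[
H(-\chi_J)=L_{\Delta_J}(-1)=(-1)^{\d \Delta_J}\sharp(\relint(\Delta_J)\cap L)=\natural(\Delta_J),
\]
so no full-dimensionality hypothesis on the individual $\Delta_J$ is needed.

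The remainder of the argument is formal monomial extraction. Writing $H(\mathbf{t})=\sum_{\mathbf{a}}c_{\mathbf{a}}\mathbf{t}^{\mathbf{a}}$ with $|\mathbf{a}|\leq n$, observe that $(-\chi_J)^{\mathbf{a}}$ equals $(-1)^{|\mathbf{a}|}$ when $J\supseteq \supp(\mathbf{a})$ and vanishes otherwise. Therefore
\[
\dsum_{J\subset\{1,\ldots,n\}}(-1)^{|J|}\natural(\Delta_J)=\dsum_{\mathbf{a}}c_{\mathbf{a}}(-1)^{|\mathbf{a}|}\dsum_{J\supseteq \supp(\mathbf{a})}(-1)^{|J|}.
\]
The inner sum equals $(-1)^{|S|}(1-1)^{n-|S|}$ with $S=\supp(\mathbf{a})$, which vanishes unless $S=\{1,\ldots,n\}$, in which case it equals $(-1)^n$. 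Combined with the degree bound $|\mathbf{a}|\leq n$, this forces $\mathbf{a}=(1,\ldots,1)$, and the total equals $(-1)^n(-1)^nc_{(1,\ldots,1)}=MV(\Delta_1,\ldots,\Delta_n)$, as desired.

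The main obstacle is Step 1: the polynomiality of the mixed Ehrhart function $H$ and the identification of its top-degree part with the volume polynomial. This is genuinely nontrivial; standard proofs proceed by induction on dimension via slicing, by triangulating into simplices and computing directly, or by encoding the weighted Minkowski sum as a fiber of a single higher-dimensional polytope in the spirit of the Cayley trick (see \cite[Section 6]{D-K}) and reducing to the classical single-variable Ehrhart theorem.
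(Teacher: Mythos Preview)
Your argument via the multivariate Ehrhart polynomial and Ehrhart--Macdonald reciprocity is correct. The paper itself does not prove Theorem~\ref{KH}; it is simply cited as Khovanskii's result. What the paper does prove is the generalization Proposition~\ref{AE}, and that proof, specialized by omitting the extra polytope $\Delta_0$, yields an implicit proof of Theorem~\ref{KH} by a very different route: each $\Delta_j$ is lifted to the graph $\tl{\Delta}_j$ of a generic affine function in $\RR^n\oplus\RR$, a downward ray $l$ is added, and Lemma~\ref{kel} rewrites $\natural(\Delta_J)$ as an alternating sum over bounded faces $\tl{\Gamma}$ of $\tl{\Delta}_{\{1,\ldots,n\}}+l$. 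Genericity forces the projected tuple $(\Gamma_{\{1\}},\ldots,\Gamma_{\{n\}})$ associated to each such face either to have some $\Gamma_{\{j\}}$ a point (Step~A, trivial cancellation) or to consist of $n$ linearly independent segments (Step~B, a direct parallelepiped count), and summing the Step~B contributions recovers $MV(\Delta_1,\ldots,\Delta_n)$.

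Your approach is more conceptual and compresses all the geometry into the single input of McMullen's polynomiality theorem plus reciprocity; the paper's subdivision argument is more hands-on but entirely elementary once Lemma~\ref{kel} is in place. The trade-off is that the paper's method extends verbatim when one further polytope $\Delta_0$ is allowed to be non-integral (the point of Proposition~\ref{AE}), whereas your reciprocity step $H(-\chi_J)=\natural(\Delta_J)$ would break down there, since Ehrhart reciprocity requires lattice polytopes.
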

From now on, we will
generalize this theorem
as follows. Let $\Delta_1,
\ldots, \Delta_n$ be as above and
pick another polytope $\Delta_0$
in $\RR^n$ (which is not assumed to
be integral). Also for a subset
$I \subset \{ 0, 1,2, \ldots, n\}$
we set $\Delta_I =\sum_{j \in I}\Delta_j$.
\begin{proposition}\label{AE}
In the situation as above, we have
\begin{eqnarray}\label{aim}
MV(\Delta_1, \Delta_2, \ldots, \Delta_n)
& = &
\dsum_{J \subset \{ 1,2, \ldots, n\} }
(-1)^{\sharp J}
\natural (\Delta_{\{ 0\} \sqcup J})
\\
& = &
\dsum_{J \subset \{ 1,2, \ldots, n\} }
(-1)^{n- \sharp J}
\sharp (\Delta_{\{ 0\} \sqcup J}).
\end{eqnarray}
\end{proposition}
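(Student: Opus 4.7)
The plan is to prove both identities in \eqref{aim} by realizing each side as the top-degree coefficient of an $n$-variable polynomial that encodes lattice point counts in the Minkowski family $\{\Delta_0 + t_1\Delta_1 + \cdots + t_n\Delta_n\}_{t \in \ZZ_{\geq 0}^n}$. Introduce
\[
\mathcal{F}(t_1,\ldots,t_n) := \sharp\bigl((\Delta_0 + t_1\Delta_1 + \cdots + t_n\Delta_n) \cap L\bigr).
\]
The first step is to show that $\mathcal{F}$ agrees on $\ZZ_{\geq 0}^n$ with a polynomial of total degree $n$; this is an extension of Ehrhart's theorem, valid because the summands $\Delta_1,\ldots,\Delta_n$ are integral even though $\Delta_0$ need not be, and can be proved by a cell-by-cell lattice-point count on a common refinement of the dual fans of the $\Delta_i$ (due essentially to McMullen). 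The asymptotic $\mathcal{F}(t) = \Vol_L(t_1\Delta_1 + \cdots + t_n\Delta_n) + O(\|t\|^{n-1})$ as $t\to\infty$ then combines with the polarization formula for the normalized mixed volume to show that the coefficient of $t_1 t_2 \cdots t_n$ in $\mathcal{F}$ equals $MV(\Delta_1,\ldots,\Delta_n)$.

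Next, extract this coefficient combinatorially. For any polynomial $p(t_1,\ldots,t_n)$ of total degree $\leq n$, the iterated first finite differences $D_i p(t) := p(t+e_i) - p(t)$ satisfy
\[
(D_1 D_2 \cdots D_n\,p)(0) = \sum_{J \subset \{1,\ldots,n\}} (-1)^{n - \sharp J}\,p(\mathbf{1}_J) = \text{coefficient of } t_1\cdots t_n \text{ in } p.
\]
Applied to $p = \mathcal{F}$ this immediately yields the second form of \eqref{aim}. For the first form, apply the parallel argument to the relative-interior counting function $\mathcal{G}(t) := \sharp(\relint(\Delta_0 + \sum t_i\Delta_i) \cap L)$: this is again a polynomial of degree $n$ with the same leading coefficient in $t_1\cdots t_n$ (boundary lattice points contribute only to terms of total degree $< n$), and rearranging the overall sign according to $\natural(\cdot) = (-1)^{\dim(\cdot)}\sharp(\relint(\cdot) \cap L)$, together with an inclusion-exclusion on faces to accommodate those Minkowski sums $\Delta_{\{0\}\sqcup J}$ that fail to be full-dimensional, recovers $\sum_J(-1)^{\sharp J}\,\natural(\Delta_{\{0\} \sqcup J}) = MV(\Delta_1,\ldots,\Delta_n)$.

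The main technical obstacle is the polynomiality of $\mathcal{F}$ when $\Delta_0$ is non-integral: classical Ehrhart theory yields only quasi-polynomial behavior for a single non-integer polytope, and the crucial new input is that Minkowski-thickening by the integral $\Delta_i$ smooths this quasi-periodicity into genuine polynomial behavior. A cleaner alternative that bypasses McMullen's theorem altogether is to triangulate $\Delta_0$ into lattice-translated copies of a fixed small integer simplex and, by linearity of lattice-point counts over such a decomposition, reduce \eqref{aim} to a superposition of instances of Khovanskii's Theorem \ref{KH} (the special case $\Delta_0 = \{0\}$) applied to each piece; a variant of this strategy could also handle the non-full-dimensional degenerations uniformly by peeling off $\Delta_0$ one face at a time.
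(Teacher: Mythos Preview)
Your approach is genuinely different from the paper's and the core idea for the second identity is sound, but there are real gaps.

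\medskip

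\textbf{What works.} For the $\sharp$-identity your outline is correct once the polynomiality of
\[
\mathcal{F}(t)=\sharp\bigl((\Delta_0+t_1\Delta_1+\cdots+t_n\Delta_n)\cap L\bigr)
\]
is established: the iterated forward difference $(D_1\cdots D_n\mathcal{F})(0)$ indeed extracts the coefficient of $t_1\cdots t_n$, and that coefficient is the mixed volume because the degree-$n$ part of $\mathcal{F}$ is $\Vol_L(\sum t_i\Delta_i)$. However, you correctly flag that the polynomiality is the crux, and you do not actually prove it. McMullen's theorem as usually stated requires \emph{all} summands to be lattice polytopes; extending it to allow one arbitrary real polytope $\Delta_0$ is true but not a black-box citation, and your ``cell-by-cell'' remark is a hint rather than an argument.

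\medskip

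\textbf{What does not work.} Two points are genuine problems.
First, your treatment of the $\natural$-identity is incomplete. The function $\mathcal{G}(t)=\sharp(\relint(\Delta_0+\sum t_i\Delta_i)\cap L)$ is not a polynomial on all of $\ZZ_{\geq 0}^n$ when the dimension of $\Delta_0+\sum t_i\Delta_i$ jumps (for instance when $\Delta_0$ is a point and some $t_i=0$): the relative interior changes meaning. Even granting polynomiality on the open cone $t_i>0$, the values $\mathcal{G}(\mathbf{1}_J)$ come weighted by $(-1)^{\dim\Delta_{\{0\}\sqcup J}}$ in $\natural$, and this exponent depends on $J$ in a way that is not just $(-1)^n$. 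Your phrase ``inclusion--exclusion on faces to accommodate those Minkowski sums that fail to be full-dimensional'' names the difficulty without resolving it.
Second, the proposed ``cleaner alternative'' is wrong as stated: an arbitrary (possibly irrational) polytope $\Delta_0$ cannot be triangulated into lattice translates of an integer simplex, so the reduction to Theorem~\ref{KH} does not go through.

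\medskip

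\textbf{Comparison with the paper.} The paper avoids any appeal to McMullen-type polynomiality. It lifts each $\Delta_j$ to the graph $\widetilde{\Delta}_j\subset\RR^n\oplus\RR^1$ of a generic affine function, and uses an identity for characteristic functions of faces of the Minkowski sum $\widetilde{\Delta}_{\{0,\ldots,n\}}+l$ (with $l$ a vertical ray) to rewrite both alternating sums as sums over bounded faces $\widetilde{\Gamma}$. A genericity argument forces the induced faces $\Gamma_{\{0\}},\ldots,\Gamma_{\{n\}}$ to be transversal, so only two cases survive: one with some $\dim\Gamma_{\{j\}}=0$ (where everything cancels) and one with $\Gamma_{\{1\}},\ldots,\Gamma_{\{n\}}$ independent segments and $\Gamma_{\{0\}}$ a point (handled by a direct half-open-box count). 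This reduction is entirely elementary and treats both identities simultaneously; your route, if the polynomiality and the $\natural$-sign bookkeeping were completed, would be more conceptual but relies on a nonstandard extension of mixed Ehrhart theory.
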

\begin{proof}
The proof proceeds in three steps.
\par \noindent (A) Assume that there
exists $1 \leq j \leq n$ such that
$\d \Delta_j=0$. In this case, the
mixed volume $MV(\Delta_1,
\ldots, \Delta_n)$ is zero and the
other two terms in \eqref{aim} also vanish,
because for each $J \subset \{ 1,2, \ldots, n\}$
such that $j \notin J$ we have the cancelling
\begin{equation}
(-1)^{\sharp (J \sqcup \{ j\})}
\natural (\Delta_{\{ 0\} \sqcup
J \sqcup \{ j\} })+(-1)^{\sharp J}
\natural (\Delta_{\{ 0\} \sqcup J})=0
\end{equation}
etc.
\par \noindent (B) Assume that
$\Delta_1, \Delta_2, \ldots, \Delta_n$ are
linearly independent segments and
$\Delta_0$ consists of one point $p \in \RR^n$.
In this case, for each $1 \leq j \leq n$ by taking
a vertex $q_j$ of the segment
$\Delta_j$ we set $\hat{\Delta}_j
=\Delta_j \setminus \{ q_j \}$. Then we have
\begin{eqnarray}
\dsum_{J \subset \{ 1,2, \ldots, n\} }
(-1)^{\sharp J}
\natural (\Delta_{\{ 0\} \sqcup J})
& = &
\dsum_{J \subset \{ 1,2, \ldots, n\} }
(-1)^{n- \sharp J}
\sharp (\Delta_{\{ 0\} \sqcup J})
\\
& = &
\sharp (\hat{\Delta}_1 + \cdots + \hat{\Delta}_n+p).
\end{eqnarray}
Moreover we can easily see that the last
term is equal to $MV(\Delta_1,
\ldots, \Delta_n)$.
\par \noindent (C) Now we consider the general case.
For a polytope $\Box$ in $\RR^n$ let $\1_{\Box}:
\RR^n \longrightarrow \{ 0,1 \}$ (resp.
$\rho_{\Box}: \RR^n \longrightarrow \{ 0, \pm 1 \}$)
be the characteristic function of $\Box$
(resp. the function defined by $\rho_{\Box}=
(-1)^{\d \Box} \1_{\relint (\Box)}$). In particular,
for any point $p \in \RR^n$ we have $\rho_{ \{ p\} }
=\1_{ \{ p\} }$. If $\Box$ and $\Box'$ are polytopes
in $\RR^n$ and $\Box$ majorizes $\Box'$, then for
a face $\Gamma \prec \Box$ of $\Box$ we denote by $\Gamma'$
the corresponding face of $\Box'$. The proof
of the following lemma is very easy and
left to the reader.
\begin{lemma}
In the situation as above, we have
\begin{equation}
\dsum_{\Gamma \prec \Box }
(-1)^{\d \Gamma} \rho_{\Gamma'}=\1_{\Box'},
 \ \qquad \
\dsum_{\Gamma \prec \Box }
(-1)^{\d \Gamma} \1_{\Gamma'}=\rho_{\Box'}.
\end{equation}
\end{lemma}
Actually, we need this lemma in the following
special setting.
\begin{lemma}\label{kel}
Let $\Box$ and $\Box'$ be as above and
$l \subset \RR^n$ a closed ray (i.e. a
closed half segment $\simeq [0, \infty )$)
in $\RR^n$ whose extremal point is the
origin $0 \in \RR^n$. Then we have
\begin{equation}
\dsum_{\Gamma }
(-1)^{\d \Gamma +1} \rho_{\Gamma'+l}
=\1_{\Box'+l},
 \ \qquad \
\dsum_{\Gamma }
(-1)^{\d \Gamma +1} \1_{\Gamma'+l}
=\rho_{\Box'+l},
\end{equation}
where $\Gamma$ ranges through the bounded
faces of $\Box +l$ (they are also faces of
$\Box$) and $\Gamma'$ is the
face of $\Box'$ which corresponds to $\Gamma
\prec \Box$.
\end{lemma}
Now we return to the proof of Proposition
\ref{AE}. Let $f_0, f_1, \ldots, f_n: \RR^n
\longrightarrow \RR$ be polynomials of
order $\leq 1$ such that $f_j|_{\Delta_j}>0$.
For $0 \leq j \leq n$ let $\tl{\Delta}_j
\subset \RR^n \oplus \RR^1$ be the graph of
$f_j|_{\Delta_j}$. For a subset $I \subset
\{ 0,1, \ldots, n \}$ set $\tl{\Delta}_I=
\sum_{j \in I}\tl{\Delta}_j$ and let $l$ be
the closed ray $\{ 0 \} \times \{ x \in \RR \ | \
x \leq 0 \}$ in $\RR^n \oplus \RR^1$.
Then for any $I \subset
\{ 0,1, \ldots, n \}$ the Minkowski sum
$\tl{\Delta}_{\{ 0,1, \ldots, n \} }$ majorizes
the one $\tl{\Delta}_I$. For a face
$\tl{\Gamma} \prec \tl{\Delta}_{\{ 0,1, \ldots, n \} }$
we denote by $\tl{\Gamma}_I$ the corresponding
face of $\tl{\Delta}_I$ and by $\Gamma_I \subset \RR^n$
the projection of $\tl{\Gamma}_I+l \subset
\RR^n \oplus \RR^1$ to $\RR^n$. Then
we have $\d (\tl{\Gamma}_I+l)=\d \Gamma_I +1$.
Note that for $0 \leq j \leq n$ the projection
$\Gamma_{ \{ j\} }$ is a face of $\Delta_j$,
and we have $\Gamma_I=\sum_{j \in I}
\Gamma_{ \{ j\} }$ for any $I \subset
\{ 0,1, \ldots, n \}$.
Moreover we have the following lemma.
For $0 \leq j \leq n$ let $\LL (\Delta_j)$
be the linear subspace of $\RR^n$ which
is parallel to the affine span of $\Delta_j$.
Denote by $f_j^L: \LL (\Delta_j) \longrightarrow
\RR$ the restriction of the linear part of
$f_j$ to $\LL (\Delta_j)$. Let $S=\oplus_{j=0}^n
\LL (\Delta_j)^*$ be the set of ($n+1$)-tuples
$(g_0, g_1, \ldots, g_n)$ of such linear
functions.
\begin{lemma}
There exists an open dense subset $S_{\gen}$ of
$S$ such that for any $(g_0, g_1, \ldots, g_n)
\in S_{\gen}$ we have: If the polynomials
$f_0, f_1, \ldots, f_n: \RR^n
\longrightarrow \RR$ satisfy $f_j^L=g_j$
($0 \leq j \leq n$) then for any bounded
face $\tl{\Gamma}$ of
$\tl{\Delta}_{\{ 0,1, \ldots,  n \} } +l$,
which is also a
face of $\tl{\Delta}_{\{ 0,1,
\ldots, n \} }$, the corresponding
faces $\Gamma_{ \{ 0\} },
\Gamma_{ \{ 1\} }, \ldots, \Gamma_{ \{ n\} }$
are transversal: $\d
(\sum_{j=0}^n \Gamma_{ \{ j\} })
=\sum_{j=0}^n \d \Gamma_{ \{ j\} }$.
\end{lemma}
\begin{proof}
If some faces $G_0, G_1, \ldots, G_0$ of
$\Delta_0, \Delta_1, \ldots, \Delta_n$
correspond to the same bounded face
$\tl{\Gamma}$ of
$\tl{\Delta}_{\{ 0,1,
\ldots,  n \} } +l$ then there exists
a linear function $f: \RR^n
\longrightarrow \RR$ such that
$f|_{\LL (G_j)}=f_j^L|_{\LL (G_j)}$
for any $0 \leq j \leq n$. For
each such ($n+1$)-tuple $(G_0, \ldots, G_n)$
of faces, this last condition gives a
restriction to
$(f_0^L, \ldots, f_n^L)$ and hence
defines a linear subspace $S(G_0, \ldots, G_n)$
of $S$. Note that if $G_0, \ldots, G_n$ are not
transversal the codimension of
$S(G_0, \ldots, G_n)$ is positive.
So it suffices to set $S_{\gen}$ to
be the complement of the union of
such $S(G_0, \ldots, G_n)$'s.
\end{proof}
By this lemma, after changing
the linear parts of
$f_0, f_1, \ldots, f_n$ slightly,
we may assume that
for any bounded face $\tl{\Gamma}$ of
$\tl{\Delta}_{\{ 0,1, \ldots,  n \} } +l$
the corresponding faces $\Gamma_{ \{ 0\} },
\Gamma_{ \{ 1\} }, \ldots, \Gamma_{ \{ n\} }$
are transversal.
Then by applying Lemma \ref{kel} to
the case $\Box =
\tl{\Delta}_{\{ 0,1, \ldots, n \} }$,
$\Box'= \tl{\Delta}_{ \{ 0 \} \sqcup J}$ ($J \subset
\{ 1, 2, \ldots, n \}$) and the closed ray
$l=\{ 0 \} \times \{ x \in \RR \ | \
x \leq 0 \}$ we obtain
\begin{equation}\label{e-1}
\dsum_{J \subset \{ 1,2, \ldots, n\} }
(-1)^{\sharp J}
\natural (\Delta_{\{ 0\} \sqcup J})
=
\dsum_{\tl{\Gamma}}
(-1)^{\d \tl{\Gamma}}
\dsum_{J \subset \{ 1,2, \ldots, n\} }
(-1)^{\sharp J}
\sharp (\Gamma_{\{ 0\} \sqcup J}),
\end{equation}
where $\tl{\Gamma}$ ranges through the
bounded faces of
$\tl{\Delta}_{\{ 0,1, \ldots, n \} }+l$.
By the transversality of $\Gamma_{ \{ 0\} },
\Gamma_{ \{ 1\} }, \ldots, \Gamma_{ \{ n\} }$
for $\tl{\Gamma} \prec
\tl{\Delta}_{\{ 0,1, \ldots, n \} }$ there
are only the following two cases:
\par \noindent (a) There exists $1 \leq j \leq n$
such that $\d \Gamma_{ \{ j\} }=0$.
\par \noindent (b)  $\Gamma_{ \{ 1\} },
\ldots, \Gamma_{ \{ n\} }$
are linearly independent segments and
$\d \Gamma_{ \{ 0\} }=0$.
\par \noindent In the case (a), by
applying Step (A) to
 $\Gamma_{ \{ 0\} },
\Gamma_{ \{ 1\} }, \ldots, \Gamma_{ \{ n\} }$
(the vertices of $\Gamma_{ \{ 1\} }, \ldots,
\Gamma_{ \{ n\} }$
are those of $\Delta_1, \ldots, \Delta_n$
and hence in $L$) we have
\begin{equation}
\dsum_{J \subset \{ 1,2, \ldots, n\} }
(-1)^{n- \sharp J}
\sharp (\Gamma_{\{ 0\} \sqcup J})=0.
\end{equation}
In particular, this is the case whenever
$\d \tl{\Gamma} <n$. Moreover, in the
case (b), by Step (B) we have
\begin{equation}
\dsum_{J \subset \{ 1,2, \ldots, n\} }
(-1)^{n- \sharp J}
\sharp (\Gamma_{\{ 0\} \sqcup J})
=MV(\Gamma_{ \{ 1\} },
\ldots, \Gamma_{ \{ n\} }).
\end{equation}
Hence we get
\begin{eqnarray}\label{e-2}
 &  &
\dsum_{\tl{\Gamma}}
(-1)^{\d \tl{\Gamma}}
\dsum_{J \subset \{ 1,2, \ldots, n\} }
(-1)^{\sharp J}
\sharp (\Gamma_{\{ 0\} \sqcup J})
\\
 & = &
\dsum_{\tl{\Gamma}, \ \d \tl{\Gamma}=n}
(-1)^{n}
\dsum_{J \subset \{ 1,2, \ldots, n\} }
(-1)^{\sharp J}
\sharp (\Gamma_{\{ 0\} \sqcup J})
\\
 & = &
\dsum_{\tl{\Gamma}, \ \d \tl{\Gamma}=n}
MV(\Gamma_{ \{ 1\} },
\ldots, \Gamma_{ \{ n\} }).
\end{eqnarray}
By reversing the arguments used
to obtain \eqref{e-1} and \eqref{e-2}
in the absense of the $0$-th polytopes $\Delta_0$,
$\Gamma_{ \{ 0\} }$ etc.,
we find that the last term of \eqref{e-2}
is equal to
\begin{equation}
\dsum_{J \subset \{ 1,2, \ldots, n\} }
(-1)^{\sharp J}
\natural (\Delta_{J})=
MV(\Delta_1, \ldots, \Delta_n).
\end{equation}
Similarly we have
\begin{eqnarray}
 &  &
\dsum_{J \subset \{ 1,2, \ldots, n\} }
(-1)^{n- \sharp J}
\sharp (\Delta_{\{ 0\} \sqcup J})
\\
 & = &
\dsum_{\tl{\Gamma}}
(-1)^{\d \tl{\Gamma}}
\dsum_{J \subset \{ 1,2, \ldots, n\} }
(-1)^{n- \sharp J}
\natural (\Gamma_{\{ 0\} \sqcup J})
\\
 & = &
\dsum_{\tl{\Gamma}, \ \d \tl{\Gamma}=n}
\dsum_{J \subset \{ 1,2, \ldots, n\} }
(-1)^{\sharp J}
\natural (\Gamma_{\{ 0\} \sqcup J})
\\
 & = &
\dsum_{\tl{\Gamma}, \ \d \tl{\Gamma}=n}
MV(\Gamma_{ \{ 1\} },
\ldots, \Gamma_{ \{ n\} })=
MV(\Delta_1, \ldots, \Delta_n).
\end{eqnarray}
This completes the proof.
\end{proof}
Now let us return to the situation in
Proposition \ref{EUP} and use
the notations there. Then by applying
Proposition \ref{AE} to the case
$\Delta_1= \cdots = \Delta_n=\Delta$,
$\Delta_0=\{ -w(\alpha ) \}$ and $L=L_{\tau}$
we obtain the following very simple result.
We define a finite
subset $\Lambda \subset \CC$ by $\Lambda
=\{ \tau^{v} \ | \ v \in
\ZZ^n \}$.
\begin{theorem}\label{LP}
In the situation as above, we have
\begin{equation}
\chi(Z^*)_{\alpha}=
\dsum_{p,q}  e^{p,q}(Z^*)_{\alpha}
=\begin{cases}
(-1)^{n-1} \frac{1}{\sharp \Lambda}
\Vol_{\ZZ}(\Delta )& (\alpha \in
\Lambda ), \\
0 & (\alpha \notin \Lambda ),
\end{cases}
\end{equation}
where $\Vol_{\ZZ}(*)\in \ZZ$ is
the normalized $n$-dimensional
volume with respect to the lattice $\ZZ^n$.
\end{theorem}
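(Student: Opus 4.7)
The plan is to deduce the theorem by combining Proposition \ref{EUP} with Proposition \ref{AE}. For $\alpha \in \Lambda$, pick a lift $w(\alpha) \in \ZZ^n$ with $\tau^{w(\alpha)} = \alpha$. Proposition \ref{EUP} then gives
\begin{equation}
(-1)^{n-1}\chi(Z^*)_\alpha = \sum_{k=0}^n (-1)^k \binom{n}{k}\, \natural(k\Delta - w(\alpha)),
\end{equation}
where $\natural$ is computed with respect to the sublattice $L_\tau \subset \ZZ^n$. I would then apply Proposition \ref{AE} with lattice $L = L_\tau$, taking $\Delta_1 = \cdots = \Delta_n = \Delta$ and $\Delta_0 = \{-w(\alpha)\}$. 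All vertices of $\Delta$ belong to $L_\tau$ by hypothesis, so the polytopes $\Delta_j$ ($j \geq 1$) are $L_\tau$-integral, while the single-point polytope $\Delta_0$ need not be integral (this is precisely the generality allowed by Proposition \ref{AE}). Since $\Delta_{\{0\} \sqcup J} = |J|\Delta - w(\alpha)$ depends only on $|J|$ and there are $\binom{n}{k}$ subsets $J \subset \{1,\ldots,n\}$ of cardinality $k$, Proposition \ref{AE} identifies the alternating sum above with $MV_{L_\tau}(\Delta, \ldots, \Delta)$.

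Next I would simplify the left-hand side. The mixed volume of $n$ copies of the same polytope is just its normalized volume, so $MV_{L_\tau}(\Delta, \ldots, \Delta) = \Vol_{L_\tau}(\Delta)$. The homomorphism $\ZZ^n \to \CC^*$, $v \mapsto \tau^v$, has kernel $L_\tau$ and image $\Lambda$, giving an isomorphism $\ZZ^n/L_\tau \simeq \Lambda$ and hence $[\ZZ^n : L_\tau] = \sharp\Lambda$. Rescaling the normalized volume by the lattice index yields $\Vol_{L_\tau}(\Delta) = \Vol_\ZZ(\Delta)/\sharp\Lambda$, and combining everything produces the stated formula for $\alpha \in \Lambda$.

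For $\alpha \notin \Lambda$, I would invoke the finite order of the automorphism. The element $\tau \in T$ has order $\mathrm{lcm}(\mathrm{ord}\,\tau_i)$, and the finite subgroup $\Lambda \subset \CC^*$ generated by $\tau_1, \ldots, \tau_n$ is cyclic of the same order; thus $l_\tau^{\sharp\Lambda} = \id$ on $Z^*$. Consequently every eigenvalue of $l_\tau^*$ on $H^k_c(Z^*;\CC)$ is a $\sharp\Lambda$-th root of unity, that is, an element of $\Lambda$. Therefore $e^{p,q}(Z^*)_\alpha = 0$ for all $p,q$ when $\alpha \notin \Lambda$, and in particular $\chi(Z^*)_\alpha = 0$.

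The main conceptual step is the reinterpretation of the alternating lattice-point sum as a single mixed volume through Proposition \ref{AE}; after that, only bookkeeping remains. The one subtle point is tracking the lattice normalization carefully so as to produce the factor $1/\sharp\Lambda$ from the index of $L_\tau$ in $\ZZ^n$, since Proposition \ref{AE} and Proposition \ref{EUP} are naturally stated with respect to $L_\tau$ while the answer is expressed in terms of the standard normalization $\Vol_\ZZ$.
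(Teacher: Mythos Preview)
Your proposal is correct and follows exactly the approach the paper uses: the paper's entire argument is the one-line reduction ``apply Proposition \ref{AE} with $\Delta_1=\cdots=\Delta_n=\Delta$, $\Delta_0=\{-w(\alpha)\}$ and $L=L_\tau$ to Proposition \ref{EUP},'' which you have spelled out in full, including the lattice-index bookkeeping for the factor $1/\sharp\Lambda$. Your treatment of the case $\alpha\notin\Lambda$ via the finite order of $l_\tau$ is slightly more conceptual than the paper's implicit route (where $l^*(k\Delta)_\alpha=0$ trivially because no $v\in\ZZ^n$ satisfies $\tau^v=\alpha$), but both are immediate.
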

The following definition will be frequently
used throughout this paper.
\begin{definition}
For a subvariety $Y^*
\subset (\CC^*)^n$ which is invariant by
$l_{\tau} \colon (\CC^*)^n \simto (\CC^*)^n$,
$p,q \geq 0$ and $\alpha \in \CC$ we define
the virtual Betti polynomial $\beta (Y^*)_{\alpha}
=\sum_{i=0}^{+ \infty} \beta_i(Y^*)_{\alpha} \cdot t^i
\in \ZZ [t]$ by $\beta_i(Y^*)_{\alpha}
= \sum_{p+q=i} e^{p,q}(Y^*)_{\alpha} \in \ZZ$.
\end{definition}

\begin{definition}\label{PPP}
Let $\Delta$ be an $n$-dimensional integral
polytope in $(\RR^n, \ZZ^n)$.
\begin{enumerate}
\item (see \cite[Section2.3]{D-K})
We say
that $\Delta$ is prime if for any
vertex $w$ of $\Delta$ the cone $\Con(\Delta,w)$
is generated by a basis of $\RR^n$.
\item We
say that $\Delta$ is pseudo-prime
if for any $1$-dimensional face
$\gamma \prec \Delta$ the number of the
$2$-dimensional faces
$\gamma^{\prime} \prec \Delta$ such that
$\gamma \prec \gamma^{\prime}$ is $n-1$.
\end{enumerate}
\end{definition}
By definition, prime polytopes are pseudo-prime.
Moreover, for a pseudo-prime polytope
$\Delta$ the projective
toric variety $X_{\Delta}$
associated to the dual fan of $\Delta$
is an orbifold outside finitely many
points. This implies that the closure
of a non-degenerate hypersurface
$Z^* \subset (\CC^*)^n$ in $X_{\Delta}$
is quasi-smooth in the sense of \cite{D-K}
and has the Poincar\'e duality.
By \cite[Corollary 2.15]{M-T-4} and the proof of
Theorem \ref{LP} we obtain the
following proposition, which enables us to
rewrite the main results of \cite{M-T-4}
much more simply
in terms of the volumes of polytopes
(see \cite[Theorem 5.9]{M-T-4} etc.).

\begin{proposition}\label{PLP}
In the situation of
Proposition \ref{EUP}, assume moreover that
the $n$-dimensional polytope
$\Delta =NP(g)$ is pseudo-prime. Then for
any $\alpha \in \CC \setminus \{ 1 \}$ and
$r \geq 0$ we have
\begin{equation}
\beta_r(Z^*)_{\alpha}
=(-1)^{n+r}
\sum_{\begin{subarray}{c}
\Gamma \prec \Delta\\ \d \Gamma =r+1
\end{subarray}} \left\{
\sum_{\gamma \prec \Gamma}
(-1)^{\d \gamma }
\frac{1}{\sharp \Lambda (\gamma)}
\Vol_{\ZZ}(\gamma )_{\alpha}
\right\},
\end{equation}
where $\Lambda (\gamma ) \subset \Lambda$
is defined
similarly to $\Lambda$ by using
the intersection of $\ZZ^n$ and
the affine span of $\gamma $, and
we define the integer
$\Vol_{\ZZ}(\gamma )_{\alpha} \in \ZZ_+$ by
\begin{equation}
\Vol_{\ZZ}(\gamma )_{\alpha}
= \begin{cases}
\Vol_{\ZZ}(\gamma )
& (\alpha \in
\Lambda (\gamma ) ), \\
0 & (\alpha \notin \Lambda (\gamma ) ).
\end{cases}
\end{equation}
\end{proposition}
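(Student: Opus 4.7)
The plan is to combine \cite[Corollary 2.15]{M-T-4} with Theorem \ref{LP} (and hence indirectly with Proposition \ref{AE}) in order to rewrite an expression involving lattice point counts on the faces of $\Delta$ into one involving only normalized volumes. The pseudo-prime hypothesis on $\Delta$ is essential because it forces the projective toric variety $X_{\Delta}$ to be an orbifold away from a finite set, so that the closure of $Z^* \subset (\CC^*)^n$ inside $X_{\Delta}$ is quasi-smooth in the sense of \cite{D-K}; this in turn guarantees the Poincar\'e duality needed to make the Danilov--Khovanskii type formulas of \cite[Corollary 2.15]{M-T-4} work verbatim on the $\alpha$-eigenspaces.

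First I would invoke \cite[Corollary 2.15]{M-T-4}: for $\alpha \neq 1$ and $r \geq 0$, it expresses $\beta_r(Z^*)_{\alpha}$ as an alternating sum over the faces $\Gamma \prec \Delta$ of dimension $r+1$, with each such $\Gamma$ contributing a term built from the $\alpha$-Euler characteristics $\chi(Z^*_{\gamma})_{\alpha}$ of the non-degenerate hypersurfaces $Z^*_{\gamma}$ obtained by restricting $g$ to the torus orbits of $X_{\Delta}$ indexed by the sub-faces $\gamma \prec \Gamma$. Concretely, the contribution of a given $\Gamma$ has the shape $\sum_{\gamma \prec \Gamma}(-1)^{\d \gamma}\chi(Z^*_{\gamma})_{\alpha}$, up to a uniform sign controlled by the Lefschetz-type vanishing in Proposition \ref{prp:2-15}.

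Second, I would apply Theorem \ref{LP} face by face. For each face $\gamma \prec \Delta$ the restriction $g^{\gamma}$ is a non-degenerate Laurent polynomial whose Newton polytope, regarded inside the torus of the affine span of $\gamma$ equipped with the lattice obtained by intersecting $\ZZ^n$ with that span, is precisely $\gamma$. The set of values attained by $\tau^v$ on this sublattice is $\Lambda(\gamma)$. Theorem \ref{LP} then yields the clean identity
\begin{equation*}
\chi(Z^*_{\gamma})_{\alpha} = (-1)^{\d \gamma - 1}\frac{1}{\sharp \Lambda(\gamma)}\Vol_{\ZZ}(\gamma)_{\alpha}.
\end{equation*}
Plugging this equality into the expression from \cite[Corollary 2.15]{M-T-4} and collecting signs produces exactly the stated formula, with the factor $(-1)^{n+r}$ absorbing the uniform sign from Corollary 2.15 together with the signs $(-1)^{\d \gamma - 1}$ and $(-1)^{\d \gamma}$ coming from each face.

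The principal obstacle is the sign and indexing bookkeeping: one must check that the alternating sum $\sum_{\gamma \prec \Gamma}(-1)^{\d \gamma}\frac{1}{\sharp \Lambda(\gamma)}\Vol_{\ZZ}(\gamma)_{\alpha}$ produced in this way matches, face by face and with the right signs, the expression that drops out of \cite[Corollary 2.15]{M-T-4}. The crucial ingredient that makes the final formula so simple (no residual lattice point terms) is the very phenomenon already exploited in the proof of Theorem \ref{LP}, namely that Proposition \ref{AE} collapses the usual Danilov--Khovanskii inclusion-exclusion of lattice point counts on each face $\gamma$ down to a single volume term, so that no further combinatorial remainder survives.
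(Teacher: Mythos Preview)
Your proposal is correct and follows exactly the route indicated in the paper: invoke \cite[Corollary 2.15]{M-T-4} (which needs the pseudo-prime hypothesis for Poincar\'e duality of the closure of $Z^*$ in $X_{\Delta}$) to reduce $\beta_r(Z^*)_{\alpha}$ to a signed sum of $\alpha$-Euler characteristics over faces, and then apply Theorem \ref{LP} (ultimately Proposition \ref{AE}) face by face to convert each $\chi(Z^*_{\gamma})_{\alpha}$ into the volume term $\frac{1}{\sharp \Lambda(\gamma)}\Vol_{\ZZ}(\gamma)_{\alpha}$. The paper's own proof is the single sentence ``By \cite[Corollary 2.15]{M-T-4} and the proof of Theorem \ref{LP}'', so your elaboration simply unpacks what is meant there.
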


Now let $\Box \subset \RR^n$ be
an ($n-1$)-dimensional
integral polytope whose affine span
$K \simeq \RR^{n-1}$ in $\RR^n$ does
not contain the origin $0 \in \RR^n$.
Denote by $\Delta$ the pyramid over
$\Box$ with apex $0 \in \RR^n$
and let $d_{\Box}>0$ be the lattice distance
of $\Box$ from $0 \in \RR^n$.
Let $Z^* \subset (\CC^*)^n$ be a
non-degenerate hypersurface
whose Newton polytope is $\Delta$.
Assume also that the support of
the defining Laurent polynomial of $Z^*$
is contained in $\{ 0 \} \sqcup \Box$.
Then we can define an automorphism of
$Z^*$ of order $d_{\Box}$ as follows.
Let $\height (* , K): \RR^n \longrightarrow
\RR$ be the linear map such that
$\height (v, K) =d_{\Box}>0$ for any
$v \in K$. Then
to the group homomorphism
$\ZZ^n \longrightarrow \CC^*$ defined by
\begin{equation}
v \longmapsto \exp (2 \pi \sqrt{-1} \cdot
\height (v, K)/d_{\Box})
\end{equation}
\noindent we can naturally
associate an element $\tau_{\Box} \in
(\CC^*)^n=\Spec (\CC [\ZZ^n])$
such that $(\tau_{\Box})^{d_{\Box}}=1$.
By construction
$Z^* \subset (\CC^*)^n$ is invariant
by the multiplication by $\tau_{\Box}$. Now
fix a complex number $\alpha \not= 1$.
Then by Theorem \ref{LP}, the
virtual Betti polynomial
\begin{equation}
\beta (\Box)_{\alpha}:=\beta (Z^*)_{\alpha}
\in \ZZ [t]
\end{equation}
\noindent (of degree $\leq \d \Box =n-1$) of
the hypersurface $Z^* \subset (\CC^*)^n$
defined by the ($n-1$)-dimensional
polytope $\Box$ can be calculated as follows.
First, for each face $\Gamma$ of $\Box$
we define a polynomial $\beta (\Gamma)_{\alpha}
\in \ZZ [t]$ of degree $\leq \d \Gamma$
similarly. By induction on $\d \Box$,
we may assume that for any proper face
$\Gamma$ of $\Box$ the polynomial
$\beta (\Gamma)_{\alpha}$ is already
determined. Let $\Box^{\prime}$ be
an ($n-1$)-dimensional prime integral
polytope which majorizes $\Box$
in the affine span $K$ of $\Box$.
For a face $\Gamma^{\prime}$ of
$\Box^{\prime}$ we denote by $\Gamma$
the corresponding face of $\Box$.
Then we can (uniquely) determine
$\beta (\Box)_{\alpha} \in \ZZ [t]$
by the following three conditions:

\medskip

\par \noindent (i) The degree of
$\beta (\Box)_{\alpha}$ is $\leq \d \Box$.
\par \noindent (ii) The coefficients $c_i$ of
the polynomial
\begin{equation}
\sum_{i=0}^{+ \infty} c_i t^i=
\dsum_{ \Gamma^{\prime}
\prec \Box^{\prime}}
(t^2-1)^{\d \Gamma^{\prime}- \d \Gamma}
\beta (\Gamma)_{\alpha} \in \ZZ [t]
\end{equation}
are symmetric with respect to the degree
$\d \Box$: $c_{\d \Box +k}=c_{\d \Box -k}$ for
any $k \in \ZZ$.
\par \noindent (iii) $\beta (\Box)_{\alpha}(1)
=(-1)^{\d \Box}
\Vol_{\ZZ}(\Box)_{\alpha}$, where we set
\begin{equation}
\Vol_{\ZZ}(\Box )_{\alpha}
= \begin{cases}
\Vol_{\ZZ}(\Box ) \in \ZZ_+
& ( \alpha^{d_{\Box}}=1 ), \\
0 & (\text{otherwise} ).
\end{cases}
\end{equation}
Indeed, let $\Delta^{\prime}$ be the
pyramid over $\Box^{\prime}$
 with apex $0 \in \RR^n$ and
$X_{\Delta^{\prime}}$ the projective
toric variety associated to its dual fan.
Note that $\Delta^{\prime}$ is pseudo-prime
and majorizes $\Delta$.
Then the closure of
$Z^* \subset (\CC^*)^n$ in $X_{\Delta^{\prime}}$
has the Poincar\'e duality, and
(as in \cite[Section 5.2]{D-K})
by using Theorem \ref{LP}
we obtain the above algorithm for
the computation of $\beta (\Box)_{\alpha}$.
The following definition will play a
crucial role in the proof of our main results.

\begin{definition}\label{AB}
For a complex number $\alpha \not= 1$
let $\beta (\Box)_{\alpha}\in \ZZ [t]$
be the polynomial of degree $\leq \d \Box$
as above. Then for $m \geq \d \Box$
we set
\begin{equation}
\beta (\Box, m)_{\alpha}=
(t^2-1)^{m- \d \Box}
\beta (\Box)_{\alpha} \in \ZZ [t].
\end{equation}
\end{definition}

\section{Motivic Milnor fibers over C.I. and their
virtual Betti polynomials}\label{sec:4}

For $2 \leq k \leq n$ let
\begin{equation}
W= \{ f_1= \cdots =f_{k-1}=0 \} \supset
V= \{ f_1= \cdots =f_{k-1}=f_k=0 \}
\end{equation}
be complete intersection subvarieties
of $\CC^n$ such that $0 \in V$. Assume
that $W$ and $V$ have
isolated singularities at the origin
$0 \in \CC^n$. Then by a fundamental
result of Hamm \cite{Hamm} the Milnor
fiber $F_0$ of $g:=f_k|_W \colon W
\longrightarrow \CC$ at the origin $0$
satisfies the condition $H^j(F_0;\CC) \simeq 0$
($j\neq 0, \ n-k$). Recall that the
semisimple part of the monodromy
operator $\Phi_{n-k,0}\colon H^{n-k} (F_0;\CC)
\simeq  H^{n-k} (F_0;\CC)$
was determined by Oka \cite{Oka-3}, \cite{Oka-2}
and Kirillov \cite{Kirillov} (see also
\cite{M-T-2} for some generalizations).
Our objective here is
to describe the Jordan normal form of $\Phi_{n-k,0}:
H^{n-k} (F_0;\CC) \simeq  H^{n-k} (F_0;\CC)$
in terms of the Newton
polyhedrons of $f_1, f_2, \ldots , f_k$.
For this purpose, we shall use the theory of
mixed Hodge modules due to Saito \cite{Saito-1} and
\cite{Saito-2}.
Let $\psi_{f_k}^p:=\psi_{f_k}[-1] :
\Dbc(\CC^n) \longrightarrow \Dbc(f_k^{-1}(0))$
be the shifted nearby cycle functor which
preserves the perversity. Let $\F \in
\Dbc(\CC^n)$ be the minimal extension of
the perverse sheaf $\CC_{W \setminus
\{ 0\}}[n-k+1] \in \Dbc(\CC^n \setminus
\{ 0\})$ to $\CC^n$. Then the perverse
sheaf $\psi_{f_k}^p (\F ) \in
\Dbc(f_k^{-1}(0))$
on $f_k^{-1}(0)$ has the following
decomposition with respect to the
eigenvalues $\lambda \in \CC^*$ of
its monodromy automorphism:
\begin{equation}
\psi_{f_k}^p (\F )= \bigoplus_{\lambda \in \CC^*}
\psi_{f_k, \lambda}^p (\F )
\end{equation}
(see \cite{Dimca} etc.). By Proposition
\ref{prp:2-7}
for any $\lambda \not= 1$ the support of
the perverse sheaf $\psi_{f_k, \lambda}^p (\F )$
is contained in the origin $0 \in \CC^n$.
So we may regard $\psi_{f_k, \lambda}^p (\F )$
($\lambda \not= 1$) simply as complex vector spaces
endowed with monodromy automorphisms.
Now by using
the mixed Hodge module over the
perverse sheaf $\F \in \Dbc(\CC^n)$, to
$\psi_{f_k}^p(\F )_0 \in \Dbc(\{ 0 \})$
and the semisimple part of its
monodromy automorphism, we
associate naturally an element
\begin{equation}
[H_g] \in \KK_0(\HSm)
\end{equation}
(see Saito \cite{Saito-1} and
\cite{Saito-2} for the details).
Then by construction, for any $\lambda
\not= 1$ the $\lambda$-eigenspace part
$[H_g]_{\lambda} \in \KK_0(\HS )$
of $[H_g] \in \KK_0(\HSm)$ is
identified with the complex
vector space $\psi_{f_k, \lambda}^p (\F )$
endowed with
a Hodge decomposition whose weights are
defined by its ``absolute" monodromy
filtration (see Saito \cite{Saito-1} and
\cite{Saito-2}). Here we
essentially used the purity of
the mixed Hodge module over the
perverse sheaf $\F \in \Dbc(\CC^n)$.
For an element $[H] \in \KK_0(\HSm)$,
$H \in \HSm $ with a quasi-unipotent
endomorphism $\Psi : H \simto
H$, $p, q \geq 0$ and $\lambda \in \CC$
denote by $e^{p,q}([H])_{\lambda}$
the dimension of the $\lambda$-eigenspace
of the morphism $H^{p,q} \simto
H^{p,q}$ induced by $\Psi$ on the
$(p,q)$-part $H^{p,q}$ of $H$. Then the
following results are immediate consequences
of the above construction
and Saito's very deep theory
in \cite{Saito-1} and \cite{Saito-2}.
Indeed, we can check the assertion (i) below
by explicitly calculating the mixed Hodge
numbers of our motivic Milnor fiber $\SS_{g,0}$.

\begin{proposition}\label{NJB}
Assume that $\lambda \in \CC^* \setminus \{1\}$. Then
\begin{enumerate}
\item  We have $e^{p,q}(
[H_g])_{\lambda}=0$ for $(p,q)
\notin [0,n-k] \times [0,n-k]$.
Moreover for $(p,q) \in [0,n-k] \times [0,n-k]$ we have
\begin{equation}
e^{p,q}( [H_g])_{\lambda}=e^{n-k-q,n-k-p}(
[H_g])_{\lambda}.
\end{equation}
\item For $i \geq 1$, the number
of the Jordan blocks
for the eigenvalue $\lambda$ with sizes $\geq i$
in $\Phi_{n-k, 0} : H^{n-k}(F_0;\CC) \simto
H^{n-k}(F_0;\CC)$ is equal to
\begin{equation}
 \sum_{p+q=n-k-1+i, n-k+i} e^{p,q}(
[H_g])_{\lambda}.
\end{equation}
\end{enumerate}
\end{proposition}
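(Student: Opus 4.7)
The plan is to derive both (i) and (ii) by importing Saito's theory of mixed Hodge modules \cite{Saito-1}, \cite{Saito-2} and applying two standard structural results: polarizability of the monodromy weight filtration on nearby cycles of a pure Hodge module, and the combinatorial correspondence between Jordan block sizes of a unipotent automorphism and the dimensions of the graded pieces of its monodromy filtration.

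Since $W \setminus \{0\}$ is smooth of dimension $n-k+1$, the shifted constant sheaf $\CC_{W \setminus \{0\}}[n-k+1]$ underlies a polarizable pure Hodge module of weight $n-k+1$, and its intermediate extension $\F$ to $\CC^n$ is therefore simple and pure of the same weight. Saito's shifted nearby cycle functor $\psi_{f_k}^{H,p}$ produces a mixed Hodge module on $f_k^{-1}(0)$ whose weight filtration $W_{\bullet}$ is the monodromy filtration centered at $(n-k+1)-1 = n-k$. For $\lambda \neq 1$ the perverse sheaf $\psi_{f_k, \lambda}^p(\F)$ is supported at the origin, and the MHS on its stalk is exactly $[H_g]_{\lambda}$. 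For part (i), the weight bound $0 \leq p+q \leq 2(n-k)$ is the general length bound for the monodromy filtration attached to an isolated singularity, while the refinement $p, q \in [0, n-k]$ reflects that $\F$ has constant coefficients: the Hodge filtration on the underlying $\mathcal{D}$-module lives in the range $[0, n-k]$ and this property is inherited by $\psi_{f_k}^p(\F)$. The symmetry $e^{p,q}([H_g])_{\lambda} = e^{n-k-q, n-k-p}([H_g])_{\lambda}$ is the translation to Hodge types of the polarization isomorphisms $N^j : \mathrm{Gr}^W_{n-k+j} \simto \mathrm{Gr}^W_{n-k-j}(-j)$, where $N$ is the nilpotent logarithm of the unipotent part of $\Phi$: a class of type $(p, q)$ with $p+q = n-k+j$ is sent to one of type $(p-j, q-j) = (n-k-q, n-k-p)$.

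For part (ii), let $n_s$ denote the number of Jordan blocks of size $s$ appearing in the unipotent part of $\Phi$ on $[H_g]_{\lambda}$. Each such block contributes one dimension to $\mathrm{Gr}^W_{n-k+j}[H_g]_{\lambda}$ for every $j \in \{-s+1, -s+3, \ldots, s-3, s-1\}$, and nothing to the other graded pieces. Summing over blocks and telescoping by parity yields
\[
\dim \mathrm{Gr}^W_{n-k-1+i}[H_g]_{\lambda} + \dim \mathrm{Gr}^W_{n-k+i}[H_g]_{\lambda} = \sum_{s \geq i} n_s,
\]
which together with $\dim \mathrm{Gr}^W_m [H_g]_{\lambda} = \sum_{p+q=m} e^{p,q}([H_g])_{\lambda}$ is exactly the claimed formula. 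Passing from $[H_g]_{\lambda}$ back to $\Phi_{n-k, 0}$ on $H^{n-k}(F_0; \CC)_{\lambda}$ uses Proposition \ref{prp:2-7} together with the observation that $\F$ and $\CC_W[n-k+1]$ differ only by a complex supported at $\{0\} \subset f_k^{-1}(0)$, whose nearby cycle vanishes entirely. The main obstacle is the delicate verification that Saito's absolute monodromy filtration on $\psi_{f_k, \lambda}^p(\F)_0$ coincides with the classical monodromy weight filtration governing Jordan block sizes, and that the Hodge-type bounds $p, q \in [0, n-k]$ persist for the intermediate-extension coefficient sheaf $\F$ rather than for a constant sheaf on a smooth variety; both points are contained in \cite{Saito-1}, \cite{Saito-2}, and once invoked the rest is purely formal manipulation of the monodromy filtration.
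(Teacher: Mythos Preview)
Your argument is correct and follows essentially the same route as the paper, which offers no detailed proof but simply declares the proposition an ``immediate consequence'' of Saito's theory in \cite{Saito-1}, \cite{Saito-2}; your write-up fills in the standard mechanism (purity of $\F$, the absolute monodromy filtration on $\psi_{f_k,\lambda}^p(\F)_0$, the Lefschetz isomorphisms $N^j$, and the Jordan-block combinatorics) exactly as intended. The one point of divergence is the justification of (i): the paper remarks that the vanishing and the symmetry $e^{p,q}=e^{n-k-q,n-k-p}$ can be checked a posteriori by explicitly computing the mixed Hodge numbers of the motivic Milnor fiber $\SS_{g,0}$ via Theorem \ref{general}, whereas you deduce them a priori from the polarization $N^j:\mathrm{Gr}^W_{n-k+j}\simto\mathrm{Gr}^W_{n-k-j}(-j)$ and the Hodge-filtration range of the constant-coefficient module. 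Your abstract derivation is cleaner and logically independent of the later toric computations, while the paper's remark serves as a consistency check once those computations are in hand; the two arguments are complementary rather than genuinely different.
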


\begin{remark}\label{EV1}
By Proposition \ref{prp:2-7},
for $\lambda =1$ the dimension of the support
of $\psi_{f_k, \lambda}^p (\F )$ is not
zero in general. Therefore
for $\lambda =1$ we cannot prove
the symmetry of weights of
$[H_g]_{\lambda} \in \KK_0(\HS )$ as in
Proposition \ref{NJB} (i) (indeed we
can easily find counterexamples).
This fact explains the reason why
the results on the Jordan blocks for the
eigenvalue $1$ in $\Phi_{n-k,0}$
cannot be obtained by our methods. For
related problems, see also for
example Ebeling-Steenbrink \cite{E-S}.
\end{remark}

By Proposition \ref{NJB},
for $\lambda \not= 1$ the calculation of
the eigenvalue $\lambda$ part of the
Jordan normal form of $\Phi_{n-k,0}$ is
reduced to that of $e^{p,q}([H_g])_{\lambda}$.
Moreover, as in Denef-Loeser \cite{D-L-1},
\cite{D-L-2} and
Guibert-Loeser-Merle \cite{G-L-M},
by using a resolution of
singularities of $W$ and $g : W
\longrightarrow \CC$ we can
construct a motivic Milnor fiber $\SS_{g,0}$
of $g$ at $0 \in \CC^n$
which enables us to calculate
$e^{p,q}([H_g])_{\lambda}$
as follows. Let $\pi : X \longrightarrow \CC^n$
be a proper morphism from a
smooth algebraic variety $X$ such that
$\pi |_{X \setminus \pi^{-1}(0)}:
X \setminus \pi^{-1}(0) \longrightarrow
\CC^n \setminus \{ 0\}$ is an isomorphism and
$\pi^{-1}(0)=D_1\cup \cdots \cup D_m$ is a normal crossing
divisor ($D_1,\ldots, D_m$ are smooth) in $X$.
Then via the isomorphism $X \setminus \pi^{-1}(0)
\simeq \CC^n \setminus \{ 0\}$ we regard
$W \setminus \{ 0\}$ as a subset of $X$ and
denote by $W^{\prime}$ its closure in $X$.
We call $W^{\prime}$ the proper transform of
$W$ in $X$. By Hironaka's theorem we can take
$\pi : X \longrightarrow \CC^n$ such that
$W^{\prime}$ is smooth and intersects
$D_I:=\bigcap_{i \in I}D_i$ transversally
for any subset $I \subset \{ 1,2,
\ldots , m \}$. We may assume also that the
hypersurface
$S:=\overline{f_k^{-1}(0) \setminus \{ 0 \}}
\subset X$ is smooth in a neighborhood of
$W^{\prime}$ and intersects
$D_I \cap W^{\prime}$ transversally
for any $I \subset \{ 1,2,
\ldots , m \}$. For $1 \leq i \leq m$ let
$d_i>0$ be the order of the zero of
$f_k \circ \pi$ along $D_i$. For a non-empty subset
$I \subset \{ 1,2, \ldots , m \}$ set
$D_I^{\circ}=D_I \setminus (\bigcup_{i \notin I}D_i)$,
\begin{equation}
E_I^{\circ}=(D_I^{\circ} \cap W^{\prime})
\setminus S, \hspace{10mm}
F_I^{\circ}=(D_I^{\circ} \cap W^{\prime}) \cap S
\end{equation}
and $d_I={\rm gcd} (d_i)_{i \in I}>0$.
Then, as in \cite[Section 3.3]{D-L-2}, we
can construct an unramified
Galois covering $\tl{E_I^{\circ}}
\longrightarrow E_I^{\circ}$ of $E_I^{\circ}$
as follows. First, let $U
\subset X \setminus S$ be an affine
open subset such that $f_k \circ \pi = h_{1,W}
 \cdot (h_{2,W})^{d_I}$ on $U$, where $ h_{1,W}$
is a unit on $U$ and
$h_{2,W} : U \longrightarrow \CC$
is a regular function. It is
easy to see that $E_I^{\circ}$
is covered by such open subsets $U$.
Then by gluing the varieties
\begin{equation}
\{(t,x) \in \CC^* \times (E_I^{\circ} \cap U)
\ |\  h_{1,W}(x) \cdot t^{d_I} -1=0 \}
\end{equation}
together in an obviously way we obtain
the $d_I$-fold covering $\tl{E_I^{\circ}}$ of
$E_I^{\circ}$. Now for $d \in \ZZ_{>0}$,
let $\mu_d \simeq \ZZ/\ZZ d$ be the
multiplicative group consisting of the
$d$ roots of unity
$\{ 1, \zeta_{d}, \zeta_{d}^2, \ldots, \zeta_{d}^{d-1}
 \}$, where we set $\zeta_{d}:=
\exp (2\pi\sqrt{-1}/d) \in \CC$.
Then the unramified
Galois covering $\tl{E_I^{\circ}}$
of $E_I^{\circ}$ admits a natural action of
$\mu_{d_I}$ defined by
assigning the automorphism $(t,x) \longmapsto
(\zeta_{d_I} t, x)$ of $\tl{E_I^{\circ}}$
to the generator
$\zeta_{d_I} \in \mu_{d_I}$. Moreover, let
$\hat{\mu}$ be the projective limit
$\underset{d}{\varprojlim} \mu_d$ of the
projective system $\{ \mu_i \}_{i \geq 1}$
with morphisms $\mu_{id}
\longrightarrow \mu_i$ given by
$t \longmapsto t^d$.  Then the variety
$\tl{E_I^{\circ}}$ is endowed with
a good $\hat{\mu}$-action in the sense
of \cite[Section 2.4]{D-L-2}.
Following the notations in \cite{D-L-2},
denote by $\M_{\CC}^{\hat{\mu}}$ the
ring obtained from the Grothendieck
ring $\KK_0^{\hat{\mu}}(\Var_{\CC})$ of
varieties over $\CC$ with good
$\hat{\mu}$-actions by
inverting the Lefschetz motive $\LL\simeq \CC \in
\KK_0^{\hat{\mu}}(\Var_{\CC})$. Recall that $\LL \in
\KK_0^{\hat{\mu}}(\Var_{\CC})$
is endowed with the trivial action of
$\hat{\mu}$. We denote by $[\tl{E_I^{\circ}}]$
(resp. $[ F_I^{\circ} ]$)
the class of the variety $\tl{E_I^{\circ}}$
(resp. $F_I^{\circ}$) endowed
with the above $\hat{\mu}$-action
(resp. the trivial $\hat{\mu}$-action) in
$\M_{\CC}^{\hat{\mu}}$.

\begin{definition}
(\cite{D-L-1},
\cite{D-L-2} and \cite{G-L-M})
We define the motivic Milnor
fiber $\SS_{g,0} \in \M_{\CC}^{\hat{\mu}}$ of $g : W
\longrightarrow \CC$ at the origin $0 \in \CC^n$ by
\begin{equation}
\SS_{g,0} =\sum_{I \neq \emptyset}
\left\{ (1-\LL)^{\sharp I -1}
[\tl{E_I^{\circ}}] + (1-\LL)^{\sharp I}
[F_I^{\circ}] \right\} \in \M_{\CC}^{\hat{\mu}}.
\end{equation}
\end{definition}

For the description of the element
$[H_g]\in \KK_0(\HSm)$ in terms of
$\SS_{g,0} \in \M_{\CC}^{\hat{\mu}}$, let
\begin{equation}
\chi_h \colon \M_{\CC}^{\hat{\mu}}
\longrightarrow \KK_0(\HSm)
\end{equation}
be the Hodge characteristic map
defined in \cite{D-L-2}.  To a variety $Z$ with
a good $\mu_d$-action it
associates the Hodge structure
\begin{equation}
\chi_h ([Z])=\sum_{j \in \ZZ} (-1)^j
[H_c^j(Z;\QQ)] \in \KK_0(\HSm)
\end{equation}
with the actions induced by the one
$z \longmapsto \zeta_d \cdot z$
($z\in Z$) on $Z$.
Then by the proof of Denef-Loeser
\cite[Theorem 4.2.1]{D-L-1}
we obtain the following result.

\begin{theorem}\label{ITM}
In the Grothendieck group $\KK_0(\HSm)$, we have
\begin{equation}
[H_g]= (-1)^{n-k} \chi_h(\SS_{g,0}).
\end{equation}
\end{theorem}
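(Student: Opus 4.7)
The plan is to adapt the proof of Denef--Loeser \cite[Theorem 4.2.1]{D-L-1} (which treats the case $W=\CC^n$) to the complete intersection setting considered here. Two new features must be handled: $\F$ is the minimal extension of a shifted constant sheaf on the \emph{singular} variety $W$ rather than $\CC_{\CC^n}$; and the zero divisor of $f_k\circ\pi$ on the proper transform $W'$ contains both the exceptional components $D_i\cap W'$ (with multiplicities $d_i$) and the hypersurface $S\cap W'$ (with multiplicity $1$), the latter being what distinguishes the $E_I^\circ$ from the $F_I^\circ$ contributions in the definition of $\SS_{g,0}$.

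First, I would reduce to a computation on the smooth variety $W'$. Since $W$ has an isolated singularity at $0$ and $\pi|_{W'}\colon W'\to W$ is proper and an isomorphism away from $0$, Saito's formalism identifies $\F$, as a mixed Hodge module on a punctured neighborhood of $0$, with $(\pi|_{W'})_*\CC_{W'}^H[n-k+1]$; any further summands are supported at the origin in other perverse degrees and, by Saito's purity theorem, do not contribute to the $\lambda\neq 1$ eigencomponents of $\psi_{f_k}^p(\F)_0$. Hence the computation reduces to that of $\psi^p_{f_k\circ\pi|_{W'}}(\CC_{W'}[n-k+1])$ restricted to $(\pi|_{W'})^{-1}(0)$.

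Next, I would apply the A'Campo--Denef--Loeser motivic nearby cycle formula on the smooth variety $W'$, whose NCD zero divisor for $f_k\circ\pi|_{W'}$ has components $D_i\cap W'$ of multiplicity $d_i$ together with $S\cap W'$ of multiplicity $1$. Each stratum contributes a term built from an unramified Galois cover obtained by extracting appropriate roots of the unit part of the function. Refining the stratification according to whether $S\cap W'$ passes through splits the contributions in two: strata missing $S$ yield $(1-\LL)^{\sharp I-1}[\tl{E_I^\circ}]$ via the $d_I$-fold cover of Section \ref{sec:4}, while strata meeting $S$ yield $(1-\LL)^{\sharp I}[F_I^\circ]$ with trivial cover, since $\gcd(d_I,1)=1$ once the $S$-component (of multiplicity $1$) is included as an additional factor in the local monomial expression of $f_k\circ\pi$. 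Summing over $I\neq\emptyset$ reproduces the definition of $\SS_{g,0}$ exactly.

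Finally, applying $\chi_h$ transports the identity to $\KK_0(\HSm)$, and the sign $(-1)^{n-k}$ emerges from the combined shifts in the perverse normalization ($\F$ carries $[n-k+1]$ and $\psi^p$ carries $[-1]$), giving a net $(-1)^{n-k}$ after Euler-characteristic-type counting. The main obstacle I expect is the explicit local verification of the $F_I^\circ$ contribution: in local coordinates near a point of $S\cap D_I\cap W'$, one must compute the motivic nearby cycle of $u\cdot x_1^{d_{i_1}}\cdots x_{\sharp I}^{d_{i_{\sharp I}}}\cdot y$ (with $y$ a local equation of $S\cap W'$ and $u$ a unit) and confirm that it equals $(1-\LL)^{\sharp I}[F_I^\circ]$ with trivial $\hat{\mu}$-action. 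The Saito-purity identification of $\F$ with the pushforward from $W'$ is a second delicate point, though standard arguments apply.
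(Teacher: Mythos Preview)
Your proposal is correct and follows the paper's approach, which is simply to invoke ``the proof of Denef--Loeser \cite[Theorem 4.2.1]{D-L-1}''; you have supplied the details that the paper omits, including the correct identification of the two types of strata ($E_I^\circ$ versus $F_I^\circ$) according to whether the proper transform $S$ of $f_k^{-1}(0)$ passes through.

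One simplification: your first reduction step is more delicate than necessary, and the restriction to $\lambda\neq 1$ there is not needed. Since $0\in V\subset f_k^{-1}(0)$, the nearby cycle $\psi_{f_k}^p(\F)$ depends only on $\F|_{\CC^n\setminus f_k^{-1}(0)}$, where $\F$, $\CC_W[n-k+1]$, and $R(\pi|_{W'})_*\CC_{W'}[n-k+1]$ all agree. Hence $\psi_{f_k}^p(\F)\simeq R(\pi|_{W'})_*\psi^p_{f_k\circ\pi|_{W'}}(\CC_{W'}[n-k+1])$ by proper base change, with no appeal to Saito's purity and no eigenvalue restriction; the theorem holds in $\KK_0(\HSm)$ as stated, not only on $\lambda\neq 1$ parts. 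Your anticipated ``second delicate point'' therefore dissolves.
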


Thus our problem was reduced to the
calculation of $\chi_h(\SS_{g,0})
\in \KK_0(\HSm)$.

\begin{definition}
Let $f(x) \in \CC[x_1,\ldots,x_n]$ be
a polynomial on $\CC^n$.
\begin{enumerate}
\item We call the convex hull of
$\bigcup_{v \in \supp f} \{ v+\RR_+^n \}$ in
$\RR_+^n$ the Newton polyhedron
of $f$ at the origin $0 \in \CC^n$
and denote it by $\Gamma_+(f)$.
\item We say that $f$ is convenient if $\Gamma_+(f)$
intersects each coordinate axis of $\RR^n$
outside the origin.
\end{enumerate}
\end{definition}

From now on, in order to describe our
results explicitly, assume also that $f_1,
f_2, \ldots , f_k$ are convenient.
Set $f:=(f_1, f_2, \ldots , f_k)$ and
\begin{equation}
\Gamma_+(f):=\Gamma_+(f_1)+\Gamma_+(f_2)+
\cdots + \Gamma_+(f_k).
\end{equation}
We denote the union of compact faces of
$\Gamma_+(f)$ by $\Gamma_f$. Recall
that on $\RR_+^n$ we can define an equivalence
relation by $u \sim
u^{\prime}$ $\Longleftrightarrow$ the
supporting faces of $u$ and
$u^{\prime}$ in $\Gamma_+(f)$ are the same.
Then we obtain a decomposition
$\RR^n_+=\bigsqcup_{\Theta \prec \Gamma_+(f)}
\sigma_{\Theta}$ of $\RR^n_+$
into locally closed cones $\sigma_{\Theta}$.
Since for a face $\Theta \prec
\Gamma_+(f)$ such that $\Theta \subset
\Gamma_f$ (i.e. a compact face $\Theta$ of
$\Gamma_+(f)$)
the supporting face of
$u \in \sigma_{\Theta}$ in
$\Gamma_+(f_j)$ does not depend on the choice
of $u \in \sigma_{\Theta}$, we
denote it simply by $\gamma_j^{\Theta}$.
Then we have
\begin{equation}
\Theta = \gamma_1^{\Theta} + \gamma_2^{\Theta}
+ \cdots + \gamma_k^{\Theta}.
\end{equation}
For $1 \leq j \leq k$ and a compact face
$\Theta$ of $\Gamma_+(f)$ we set
\begin{equation}
f_j^{\Theta}(x)=\sum_{v \in \gamma_j^{\Theta} \cap
\ZZ_+^n} a_vx^v \in \CC [x_1, x_2, \ldots, x_n],
\end{equation}
where $f_j(x)=\sum_{v\in \ZZ_+^n}
a_vx^v$ ($a_v\in \CC$).
\begin{definition}[see \cite{Oka-2} etc.]\label{NDCI}
We say that $f=(f_1, f_2, \ldots , f_k)$ is
non-degenerate at the origin
$0 \in \CC^n$ if for
any compact face $\Theta$ of $\Gamma_+(f)$
the two subvarieties $\{ f_1^{\Theta}(x)= \cdots
=f_{k-1}^{\Theta}(x)=0 \}$ and
$\{ f_1^{\Theta}(x)= \cdots
=f_{k-1}^{\Theta}(x)=
f_{k}^{\Theta}(x)=0 \}$ in $(\CC^*)^n$
are non-degenerate complete intersections.
\end{definition}
From now on, let us assume also that $f$ is
non-degenerate at the origin $0 \in \CC^n$.
Then we can construct the morphism
 $\pi : X \longrightarrow \CC^n$ explicitly
as follows. Let $\Sigma_1=\{
\overline{\sigma_{\Theta}}
\}_{\Theta \prec \Gamma_+(f)}$ be the dual
fan of $\Gamma_+(f)$. Take a smooth
subdivision $\Sigma$ of $\Sigma_1$ and
denote by $X_{\Sigma}$ the smooth toric variety
associated to the (smooth) fan $\Sigma$.
We thus obtain a proper morphism
$\pi : X_{\Sigma} \longrightarrow \CC^n$
which induces an isomorphism
$X_{\Sigma} \setminus \pi^{-1}(0)
\simeq \CC^n \setminus \{ 0\}$. Let
$\rho_1, \rho_2, \ldots, \rho_m$ be the
$1$-dimensional cones in $\Sigma$ such
that $\rho_i \setminus \{ 0 \} \subset
\Int (\RR^n_+)$ and for each
$1 \leq i \leq m$ denote by $D_i$
the (smooth) toric divisor in $X_{\Sigma}$
which corresponds to $\rho_i$.
Then we have $\pi^{-1}(0)=
D_1\cup \cdots \cup D_m$ and
it is a normal crossing
divisor in $X_{\Sigma}$. Moreover,
by the non-degeneracy of $f$,
the proper transforms $W^{\prime}$,
$S=\overline{f_k^{-1}(0) \setminus \{ 0 \}}$
and $D_i$'s satisfy
the required smoothness and transversality.
By using this explicit construction of
$\pi : X_{\Sigma} \longrightarrow \CC^n$
we can express the Hodge realizations
$\chi_h(\SS_{g,0})_{\lambda} \in \KK_0(\HS )$
($\lambda \not= 1$) of our motivic
Milnor fiber $\SS_{g,0}$ very concretely
as follows.
For a face $\Theta \prec \Gamma_+(f)$ such
that $\Theta \subset \Gamma_f$ let
$\LL_{\Theta}\simeq \RR^{\d \Theta}$
be the linear subspace of  $\RR^n$ which is parallel
to the affine span of $\Theta$. We denote by
$K_{\Theta} \simeq \RR^{\d \Theta}$ the affine linear
subspace of $\RR^n$ which is parallel to
$\LL_{\Theta}$ and contains $\gamma_k^{\Theta}$.
Let $\tl{\LL}_{\Theta} \simeq \RR^{\d \Theta +1}$
be the linear subspace of  $\RR^n$
generated by $\{ 0\} \sqcup K_{\Theta}$.
Then $\LL_{\Theta}$ is a hyperplane of
$\tl{\LL}_{\Theta}$, and to the lattice
$\tl{M}_{\Theta}=\ZZ^n \cap \tl{\LL}_{\Theta}$
we can naturally associate the algebraic torus
\begin{equation}
\tl{T}_{\Theta} =
\Spec (\CC [\tl{M}_{\Theta}])
\simeq (\CC^*)^{\d \Theta +1}.
\end{equation}
Denote the convex hull of $\{ 0\}
\sqcup \gamma_k^{\Theta}$ in $\tl{\LL}_{\Theta}$
by $\Delta_{\gamma_k^{\Theta}}$ and for
$1 \leq j \leq k-1$ let $\kappa_j^{\Theta}$ be
an integral translation of $\gamma_j^{\Theta}$
in $\tl{\LL}_{\Theta}$ such that $\kappa_j^{\Theta}
\subset K_{\Theta}$.
For simplicity, we denote
the $k$-tuple $(\kappa_1^{\Theta}, \ldots,
\kappa_{k-1}^{\Theta}, \Delta_{\gamma_k^{\Theta}})$
of integral polytopes in $(\tl{\LL}_{\Theta},
\tl{M}_{\Theta})$ by $\Delta_{\Theta}$.
Let $d_{\Theta} >0$ be the lattice
distance of the hyperplane $K_{\Theta} \subset
\tl{\LL}_{\Theta}$
from the origin $0 \in \tl{\LL}_{\Theta}$. Note
that $d_{\Theta}$ can be an integral multiple of
the lattice distance $d(\gamma_k^{\Theta})>0$ of
$\gamma_k^{\Theta}$ from $0
\in \tl{\LL}_{\Theta}$ if $\d
\gamma_k^{\Theta} < \d \Theta$. Then to $\Delta_{\Theta}$
we can naturally associate a non-degenerate
complete intersection subvariety $Z^*_{\Delta_{\Theta}}$
of $\tl{T}_{\Theta} \simeq (\CC^*)^{\d \Theta +1}$
and an action of the cyclic group
$\mu_{d_{\Theta}}=\ZZ/\ZZ d_{\Theta}$
on it as follows. Let $g_j^{\Theta}$
($j=1,2, \ldots , k-1$) and $\tl{g}_k^{\Theta}$
be Laurent polynomials on
$\tl{T}_{\Theta}$ such that $NP(g_j^{\Theta})=
\kappa_j^{\Theta}$ and $NP(\tl{g}_k^{\Theta})=
\Delta_{\gamma_k^{\Theta}}$. Assume also that
the support $\supp \tl{g}_k^{\Theta}$ of
$\tl{g}_k^{\Theta}$ is contained in
$\{ 0\} \sqcup \gamma_k^{\Theta}$ and the
subvariety
\begin{equation}
Z^*_{\Delta_{\Theta}}=\{ g_1^{\Theta}(x) = \cdots
=g_{k-1}^{\Theta}(x)
=\tl{g}_k^{\Theta}(x)=0 \} \subset \tl{T}_{\Theta}
\end{equation}
of $\tl{T}_{\Theta}$ is a non-degenerate
complete intersection.
Let $\height (* , K_{\Theta}): \tl{\LL}_{\Theta}
\longrightarrow \RR$ be the linear map such that
$\height (v, K_{\Theta}) =d_{\Theta}>0$ for any
$v \in K_{\Theta}$. Then
to the group homomorphism
$\tl{M}_{\Theta} \longrightarrow \CC^*$ defined by
\begin{equation}
v \longmapsto \exp (2 \pi \sqrt{-1} \cdot
\height (v, K_{\Theta})/d_{\Theta})
\end{equation}
\noindent we can naturally
associate an element $\tau_{\Theta} \in
\tl{T}_{\Theta}=\Spec (\CC [\tl{M}_{\Theta}])$
such that $(\tau_{\Theta})^{d_{\Theta}}=1$.
Since $Z^*_{\Delta_{\Theta}} \subset
\tl{T}_{\Theta}$ is invariant by the multiplication
$l_{\tau_{\Theta} }
\colon  \tl{T}_{\Theta} \simto
\tl{T}_{\Theta}$ by $\tau_{\Theta}$, the variety
$Z^*_{\Delta_{\Theta}}$ admits an
action of $\mu_{d_{\Theta}}$. We thus
obtain an element $[Z^*_{\Delta_{\Theta}}]
\in \M_{\CC}^{\hat{\mu}}$.
Finally, for the compact face
$\Theta \prec \Gamma_+(f)$, let
$s_{\Theta}$ be the dimension of the
minimal coordinate subspace of $\RR^n$ containing
$\Theta$ and set
$m_{\Theta}=s_{\Theta}-\d \Theta -1\geq 0$.

\begin{theorem}\label{general}
Assume that $\lambda \in \CC^*\setminus \{1\}$. Then
\begin{enumerate}
\item In the
Grothendieck group $\KK_0(\HS )$, we have
\begin{equation}
\chi_h(\SS_{g,0})_{\lambda}= \dsum_{
\Theta \subset \Gamma_f,
 \d \Theta \geq k-1 }
\chi_h\big((1-\LL)^{m_{\Theta}}
\cdot[Z_{\Delta_{\Theta}}^*]\big)_{\lambda}.
\end{equation}
In particular, the virtual Betti polynomial
$\beta (\SS_{g,0})_{\lambda} \in \ZZ [t]$ is given by
\begin{equation}
\beta (\SS_{g,0})_{\lambda}= \dsum_{
\Theta \subset \Gamma_f,
 \d \Theta \geq k-1 }
(1-t^2)^{m_{\Theta}} \cdot
\beta (Z^*_{\Delta_{\Theta}})_{\lambda}.
\end{equation}
\item For $i \geq 1$, the number
of the Jordan blocks for the eigenvalue
$\lambda$ with sizes $\geq i$ in
$\Phi_{n-k,0}\colon H^{n-k}(F_0;\CC) \simeq
H^{n-k}(F_0;\CC)$ is equal to
\begin{equation}
(-1)^{n-k} \left\{ \beta_{n-k-1+i}(\SS_{g,0})_{\lambda}
+ \beta_{n-k+i}(\SS_{g,0})_{\lambda} \right\}.
\end{equation}
\end{enumerate}
\end{theorem}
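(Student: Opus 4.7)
Assertion (ii) should be immediate from Proposition \ref{NJB}(ii) combined with Theorem \ref{ITM}: applying $e^{p,q}(-)_\lambda$ to the identity $[H_g] = (-1)^{n-k}\chi_h(\SS_{g,0})$ and summing over $p+q = n-k-1+i$ and $p+q = n-k+i$ yields exactly $(-1)^{n-k}\{\beta_{n-k-1+i}(\SS_{g,0})_\lambda + \beta_{n-k+i}(\SS_{g,0})_\lambda\}$ by the definition of the virtual Betti numbers. So the real content lies in (i), and for this I would work directly on the toric resolution $\pi : X_\Sigma \to \CC^n$ constructed just before the statement, where $\Sigma$ is a smooth subdivision of the dual fan $\Sigma_1$ of $\Gamma_+(f)$.

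My plan is to evaluate
\begin{equation*}
\SS_{g,0} = \sum_{I \neq \emptyset} \bigl\{(1-\LL)^{\sharp I - 1}[\tl{E_I^\circ}] + (1-\LL)^{\sharp I}[F_I^\circ]\bigr\}
\end{equation*}
by partitioning the cones $\sigma_I = \sum_{i \in I}\rho_i \in \Sigma$ according to the unique face $\Theta \prec \Gamma_+(f)$ for which $\relint(\sigma_I) \subset \sigma_\Theta$, and then summing face-by-face. Fix a compact face $\Theta \subset \Gamma_f$. In toric local coordinates on the stratum associated with $\sigma_I$, I expect each $f_j \circ \pi$ ($j<k$) to factor as a monomial times a unit times $g_j^\Theta$, and $f_k \circ \pi$ to factor as a monomial to the $d_I$-th power times a unit times $\tl{g}_k^\Theta$. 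This would identify $\tl{E_I^\circ}$ and $F_I^\circ$ over this stratum with products of an $m_\Theta$-dimensional algebraic torus with pieces of $Z^*_{\Delta_\Theta}$ (its open part not meeting $S$ and its closed part meeting $S$, respectively), and the Galois cover $\tl{E_I^\circ} \to E_I^\circ$ should realize precisely the $\mu_{d_\Theta}$-action on $Z^*_{\Delta_\Theta}$ coming from $\tau_\Theta$; the construction of $\tau_\Theta$ via the linear map $\height(-, K_\Theta)$ is exactly forced by the $d_I$-fold cover formula $t^{d_I} = h_{1,W}^{-1}$.

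Summing over all $I$ with $\relint(\sigma_I) \subset \sigma_\Theta$, I anticipate telescoping: the $m_\Theta$ torus directions from the minimal coordinate subspace containing $\Theta$ will produce a Lefschetz factor $(\LL-1)^{m_\Theta}$, which combined with the alternating signs $(1-\LL)^{\sharp I - 1}$ and $(1-\LL)^{\sharp I}$ should collapse to $(1-\LL)^{m_\Theta} \cdot [Z^*_{\Delta_\Theta}]$. For $\d \Theta < k-1$, the polytopes $\kappa_1^\Theta, \ldots, \kappa_{k-1}^\Theta, \Delta_{\gamma_k^\Theta}$ cannot cut out a complete intersection in $\tl{T}_\Theta$ of dimension $\d \Theta + 1 < k$, so $[Z^*_{\Delta_\Theta}] = 0$ and these faces drop out, giving the formula in (i). The virtual Betti polynomial identity will then follow from multiplicativity of $\chi_h$ together with the fact that $\LL$ contributes $t^2$, so that $\chi_h((1-\LL)^{m_\Theta})$ translates into multiplication by $(1-t^2)^{m_\Theta}$.

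The main obstacle will be verifying the telescoping in the previous paragraph as a genuine $\hat{\mu}$-equivariant equality in $\M_\CC^{\hat{\mu}}$, independently of the smooth subdivision $\Sigma$. The independence of $\Sigma$ is essentially the well-definedness of $\SS_{g,0}$; the real work is in matching, for each $\Theta$, the toric local description of the Galois covers $\tl{E_I^\circ}$ with the explicit $\tau_\Theta$-action on $Z^*_{\Delta_\Theta}$. This amounts to the complete-intersection motivic analogue of Denef-Loeser \cite[Section 3.3]{D-L-2}, adapted from the hypersurface case via the Cayley-trick description of $Z^*_{\Delta_\Theta}$ inside $\tl{T}_\Theta \simeq (\CC^*)^{\d \Theta + 1}$.
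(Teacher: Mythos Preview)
Your proposal is correct and follows essentially the same approach as the paper: the paper's proof of (i) simply invokes the explicit toric resolution $\pi: X_\Sigma \to \CC^n$ and refers to \cite[Theorems 5.3 and 7.3]{M-T-4} for the face-by-face computation, while (ii) is deduced immediately from Proposition \ref{NJB}(ii), exactly as you do. Your sketch spells out in more detail the mechanism behind the cited result---partitioning the exceptional strata according to the cones $\sigma_\Theta$, identifying the Galois covers $\tl{E_I^\circ}$ with pieces of $Z^*_{\Delta_\Theta}$ via the local factorizations of $f_j\circ\pi$, and telescoping the $(1-\LL)$-factors to produce $(1-\LL)^{m_\Theta}$---but this is precisely the content of the argument in \cite{M-T-4} that the paper is citing. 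One minor remark: the role of $\lambda\neq 1$ is to kill the contributions from the $[F_I^\circ]$ terms (which carry the trivial $\hat\mu$-action) rather than to feed them into the telescoping, so the collapse to $(1-\LL)^{m_\Theta}[Z^*_{\Delta_\Theta}]$ comes entirely from the $\tl{E_I^\circ}$ side once you pass to $\chi_h(-)_\lambda$.
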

\begin{proof}
By using the above explicit construction of
$\pi : X_{\Sigma} \longrightarrow \CC^n$
from $\Gamma_+(f)$ the proof of (i) is
obtained completely similarly to that of
\cite[Theorems 5.3 and 7.3]{M-T-4}. Then the assertion
(ii) follows immediately from
Proposition \ref{NJB} (ii).
\end{proof}

By the Cayley trick in
\cite[Section 6]{D-K} we can rewrite the formula
for $\beta (\SS_{g,0})_{\lambda} \in \ZZ [t]$
($\lambda \not= 1$) in
Theorem \ref{general} (i) as follows.
For a face $\Theta \prec \Gamma_+(f)$ such
that $\Theta \subset \Gamma_f$ we define
an open subset $\Omega_{\Theta}$ of
$\tl{T}_{\Theta} \times \PP^{k-1}$ by
\begin{equation}
\Omega_{\Theta}=\{ (x; (\alpha_1: \cdots :
\alpha_k)) \in \tl{T}_{\Theta} \times \PP^{k-1}
 \ | \ \sum_{j=1}^{k-1}\alpha_j g_j^{\Theta}(x)
+\alpha_k \tl{g}_k^{\Theta}(x) \not= 0 \}.
\end{equation}
By the standard decomposition $\CC^k=
\bigsqcup_{I \subset
\{ 1,2, \ldots, k\}} T_I$, $T_I \simeq
(\CC^*)^{\sharp I}$
of $\CC^k$, we obtain a stratification
$\PP^{k-1}=
\bigsqcup_{I \not= \emptyset} \PP (T_I)$
of $\PP^{k-1}$,
where we set
\begin{equation}
\PP (T_I)=\{ (\alpha_1: \cdots :
\alpha_k) \in \PP^{k-1}
\ | \ \alpha_j=0 \ (j \notin I), \
\alpha_j \not= 0 \ (j \in I) \} \simeq
(\CC^*)^{\sharp I-1}.
\end{equation}
For each subset $J \subset \{ 1,2, \ldots, k-1\}$
($J$ can be an empty set $\emptyset$), set
\begin{equation}
\Omega_{\Theta, J}=\left\{ \tl{T}_{\Theta} \times
\PP (T_{J \sqcup \{ k\}})\right\}
\cap \Omega_{\Theta}.
\end{equation}
Note that $\Omega_{\Theta, J}$ is the complement
of the hypersurface
\begin{equation}
Z^*_{\Theta, J}=\{ (x; \alpha_j
(j \in J)) \in \tl{T}_{\Theta} \times
(\CC^*)^{\sharp J}
\ | \ \sum_{j \in J} \alpha_j g_j^{\Theta}(x)
+ \tl{g}_k^{\Theta}(x) = 0 \}
\end{equation}
of the algebraic torus $\tl{T}_{\Theta} \times
(\CC^*)^{\sharp J}$. Since this hypersurface
$Z^*_{\Theta, J}$ is invariant by the multiplication
of $(\tau_{\Theta}, 1 ) \in \tl{T}_{\Theta} \times
(\CC^*)^{\sharp J}$, we obtain an action of
$\mu_{d_{\Theta}}$ on $Z^*_{\Theta, J}$ and
an element $[Z^*_{\Theta, J}] \in
\M_{\CC}^{\hat{\mu}}$.
\begin{lemma}
For $\lambda \in \CC^*\setminus\{ 1\}$ and
 a face $\Theta \prec \Gamma_+(f)$ such
that $\Theta \subset \Gamma_f$ we have
\begin{equation}
\beta (Z^*_{\Delta_{\Theta}})_{\lambda}=
\frac{1}{t^{2k-2}} \sum_{J \subset
\{ 1,2, \ldots, k-1\}}
\beta ( Z^*_{\Theta, J} )_{\lambda}.
\end{equation}
\end{lemma}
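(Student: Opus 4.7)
The plan is to apply the Cayley trick from \cite[Section 6]{D-K}, carefully tracking the $\mu_{d_{\Theta}}$-action throughout. First, I view $\Omega_{\Theta}$ as the complement of the incidence hypersurface
\begin{equation*}
\Xi_{\Theta} = \left\{ (x,(\alpha_1:\cdots:\alpha_k)) \in \tl{T}_{\Theta}\times\PP^{k-1} \ \Big| \ \sum_{j=1}^{k-1}\alpha_j g_j^{\Theta}(x) + \alpha_k \tl{g}_k^{\Theta}(x) = 0 \right\}.
\end{equation*}
Projecting onto the $\tl{T}_{\Theta}$-factor, the fiber of $\Xi_{\Theta}$ over $x$ is all of $\PP^{k-1}$ when $x\in Z^*_{\Delta_{\Theta}}$ and a projective hyperplane $\simeq\PP^{k-2}$ otherwise. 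Since $\PP^{k-1}$ carries the trivial $\hat{\mu}$-action, this gives the equivariant identity
\begin{equation*}
[\Omega_{\Theta}] = \LL^{k-1}\bigl([\tl{T}_{\Theta}] - [Z^*_{\Delta_{\Theta}}]\bigr) \in \M_{\CC}^{\hat{\mu}}.
\end{equation*}
Any translation on an algebraic torus is homotopic to the identity, so $\beta(\tl{T}_{\Theta})_{\lambda}=0$ for $\lambda\neq 1$, and passing to virtual Betti polynomials yields
\begin{equation*}
\beta(\Omega_{\Theta})_{\lambda} = -t^{2k-2}\,\beta(Z^*_{\Delta_{\Theta}})_{\lambda}.
\end{equation*}

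Next I decompose $\Omega_{\Theta}$ along the stratification $\PP^{k-1}=\bigsqcup_{I\neq\emptyset}\PP(T_I)$. For $I = J\sqcup\{k\}$ with $J\subset\{1,\ldots,k-1\}$, the corresponding piece is exactly $\Omega_{\Theta,J}$, which is the complement of $Z^*_{\Theta,J}$ in $\tl{T}_{\Theta}\times(\CC^*)^{\sharp J}$. Additivity of $\beta(-)_{\lambda}$, together with $\beta(\tl{T}_{\Theta}\times(\CC^*)^{\sharp J})_{\lambda}=0$ for $\lambda\neq 1$, then gives $\beta(\Omega_{\Theta,J})_{\lambda} = -\beta(Z^*_{\Theta,J})_{\lambda}$.

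The main step is to show that the remaining strata, those with $I\subset\{1,\ldots,k-1\}$ (i.e.\ $k\notin I$), contribute nothing at eigenvalues $\lambda\neq 1$. Here the geometry built into $\Delta_{\Theta}$ plays a decisive role: for $j<k$ the Newton polytope $\kappa_j^{\Theta}$ lies entirely in the hyperplane $K_{\Theta}\subset\tl{\LL}_{\Theta}$ at lattice distance $d_{\Theta}$ from the origin. Choosing coordinates on $\tl{M}_{\Theta}$ in which $K_{\Theta}=\{v_{d+1}=d_{\Theta}\}$, each $g_j^{\Theta}$ (for $j<k$) factors as $g_j^{\Theta}(x) = x_{d+1}^{d_{\Theta}}\cdot h_j(x_1,\ldots,x_d)$ for some Laurent polynomial $h_j$. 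Hence on any stratum with $k\notin I$ the defining inequality simplifies to $\sum_{j\in I}\alpha_j h_j(x_1,\ldots,x_d)\neq 0$, independent of $x_{d+1}$. The stratum therefore splits as a product $\CC^*_{x_{d+1}}\times Y$ on which $\tau_{\Theta}$ acts by multiplication by $\zeta_{d_{\Theta}}$ on the first factor and trivially on $Y$. Since translation on $\CC^*$ induces the identity on $H^*_c(\CC^*)$, the equivariant K\"unneth formula forces every eigenvalue of the induced $\tau_{\Theta}$-action on $H^*_c(\CC^*_{x_{d+1}}\times Y)$ to equal $1$, so the corresponding virtual Betti polynomial vanishes at $\lambda\neq 1$. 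Summing over the strata with $k\in I$ and substituting into the Cayley identity then yields exactly the stated formula. The main obstacle will be verifying this last vanishing: it is not formal and relies essentially on the affine-hyperplane structure of $K_{\Theta}$ encoded in the definition of $\Delta_{\Theta}$, together with the equivariant triviality of the cohomology of $\CC^*$ under translation.
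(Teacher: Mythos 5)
Your proof is correct and follows the same overall scheme as the paper's: the Cayley trick, the $\CC^{k-1}$-bundle $\Omega_{\Theta}\to\tl{T}_{\Theta}\setminus Z^*_{\Delta_{\Theta}}$, the stratification of $\PP^{k-1}$ by the tori $\PP(T_I)$, and the observation that multiplication by $\tau_{\Theta}$ (resp.\ $(\tau_{\Theta},1)$) is homotopic to the identity on the ambient tori, which converts $\beta(\tl{T}_{\Theta}\setminus Z^*_{\Delta_{\Theta}})_{\lambda}$ to $-\beta(Z^*_{\Delta_{\Theta}})_{\lambda}$ and $\beta(\Omega_{\Theta,J})_{\lambda}$ to $-\beta(Z^*_{\Theta,J})_{\lambda}$. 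The one place where you go beyond the paper is the verification you correctly flag as the ``main obstacle'': showing that the strata of $\Omega_{\Theta}$ lying over $\PP(T_I)$ with $k\notin I$ contribute nothing to $\beta(\Omega_{\Theta})_{\lambda}$ for $\lambda\neq 1$. The paper asserts $\beta(\tl{T}_{\Theta}\setminus Z^*_{\Delta_{\Theta}})_{\lambda}=\frac{1}{t^{2k-2}}\sum_{J\subset\{1,\ldots,k-1\}}\beta(\Omega_{\Theta,J})_{\lambda}$ directly from the bundle structure and the hypothesis $\lambda\neq 1$, leaving this vanishing tacit; your argument — that because $\supp g_j^{\Theta}\subset\kappa_j^{\Theta}\subset K_{\Theta}$ for $j<k$, each $g_j^{\Theta}$ factors as $x_{d+1}^{d_{\Theta}}h_j(x')$, so the $k\notin I$ stratum splits as $\CC^*_{x_{d+1}}\times Y$ with $\tau_{\Theta}$ acting only on the $\CC^*$-factor, where translation is homotopically trivial — is exactly the right justification and is a genuinely non-formal point worth spelling out. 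So this is the same route as the paper, carried out with one necessary detail made explicit.
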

\begin{proof}
By the definition of $Z^*_{\Delta_{\Theta}}$
the natural projection
\begin{equation}
\pi : \Omega_{\Theta} \longrightarrow
\tl{T}_{\Theta} \setminus Z^*_{\Delta_{\Theta}}
\end{equation}
is an algebraic fiber bundle whose fiber is isomorphic
to $\CC^{k-1}$. Therefore by the condition
$\lambda \not= 1$ we obtain
\begin{equation}
\beta (\tl{T}_{\Theta} \setminus
Z^*_{\Delta_{\Theta}})_{\lambda}=
\frac{1}{t^{2k-2}} \sum_{J \subset
\{ 1,2, \ldots, k-1\}}
\beta (\Omega_{\Theta, J})_{\lambda}.
\end{equation}
Since the multiplication of $\tau_{\Theta}$
on $\tl{T}_{\Theta}$ (resp. $(\tau_{\Theta}, 1 )$
on $\tl{T}_{\Theta} \times
(\CC^*)^{\sharp J}$) is homotopic to the identity,
by $\lambda \not= 1$
we obtain the desired formula.
This completes the proof.
\end{proof}
First, by replacing the defining equations of
$Z^*_{\Delta_{\Theta}}$ with the help of
Lemma \ref{INV}, we may assume that
the hypersurfaces $Z^*_{\Theta, J}
\subset \tl{T}_{\Theta} \times
(\CC^*)^{\sharp J}$ are
non-degenerate.
For a face $\Theta \prec \Gamma_+(f)$ such
that $\Theta \subset \Gamma_f$ and
$J \subset \{ 1,2, \ldots, k-1\}$ we denote by
$\Theta_J$ the convex hull of
\begin{equation}
(\gamma_k^{\Theta} \times \{ 0\}) \sqcup \left\{
\bigsqcup_{j \in J} (\kappa_j^{\Theta} \times \{ e_j\})
\right\}
\end{equation}
in $\tl{\LL}_{\Theta} \times \RR^{\sharp J}$, where
$e_j=(0, \ldots, 0, 1, 0, \ldots, 0)$ is the $j$-th
standard unit vector in $\RR^{\sharp J}$.
This integral polytope $\Theta_J$ is called the
join of $\kappa_j^{\Theta}$ ($j \in J$) and
$\gamma_k^{\Theta}$. Let $\tl{\Theta}_J$ be the
convex hull of $\{ (0, 0)\} \sqcup \Theta_J$
in $\tl{\LL}_{\Theta} \times \RR^{\sharp J}$.
Then by Definition \ref{AB} we have
\begin{equation}
\beta ( Z^*_{\Theta, J} )_{\lambda}=
\beta ( \Theta_J, \d \Theta + \sharp J )_{\lambda}=
(t^2-1)^{\d \Theta +\sharp J -\d \Theta_J}
\beta ( \Theta_J )_{\lambda}
\end{equation}
for any $\lambda \in \CC^* \setminus \{ 1\}$.
Recall that if $\lambda
\not= 1$ the degree of the polynomial
$\beta ( \Theta_J )_{\lambda} \in \ZZ [t]$ is
$\leq \d \Theta_J$ $=\d (\gamma_k^{\Theta}
+ \sum_{j \in J} \kappa_j^{\Theta})+ \sharp J$.
Then for $\lambda \in \CC^* \setminus \{ 1\}$ we obtain
\begin{eqnarray}
(1-t^2)^{m_{\Theta}}
\beta (Z^*_{\Delta_{\Theta}})_{\lambda}
& = & \frac{(-1)^{m_{\Theta}}}{t^{2k-2}}\sum_{J \subset
\{ 1,2, \ldots, k-1\}}
(t^2-1)^{s_{\Theta}+\sharp J -1-\d \Theta_J}
\beta (\Theta_J )_{\lambda}
\\
& = & \frac{(-1)^{m_{\Theta}}}{t^{2k-2}}\sum_{J \subset
\{ 1,2, \ldots, k-1\}}
\beta (\Theta_J, s_{\Theta}+\sharp J -1 )_{\lambda}.
\end{eqnarray}
Moreover for a non-degenerate hypersurface
$S^*_{\Theta , J}
\subset (\CC^*)^{s_{\Theta}+ \sharp J}$ having the thin
Newton polytope $\tl{\Theta}_J \subset
\RR^{s_{\Theta}+ \sharp J}$ (recall that
$\tl{\LL}_{\Theta} \subset \RR^{s_{\Theta}}$)
and a natural action of
$\mu_{d_{\Theta}}$ on it, we have
\begin{equation}
\beta (\Theta_J, s_{\Theta}+\sharp J -1 )_{\lambda}
=\beta (S^*_{\Theta , J} )_{\lambda}
\end{equation}
for any $\lambda \in \CC^* \setminus \{ 1\}$.
We thus obtain the following theorem.
\begin{theorem}\label{FFC}
For $\lambda \in \CC^* \setminus \{ 1\}$ we have
\begin{eqnarray}
\beta (\SS_{g,0})_{\lambda}
& = &
\dsum_{
\Theta \subset \Gamma_f,
 \d \Theta \geq k-1 }
\frac{(-1)^{m_{\Theta}}}{t^{2k-2}}\sum_{J \subset
\{ 1,2, \ldots, k-1\}}
\beta (\Theta_J, s_{\Theta}+\sharp J -1 )_{\lambda}
\\
& = &
\dsum_{
\Theta \subset \Gamma_f,
 \d \Theta \geq k-1 }
\frac{(-1)^{m_{\Theta}}}{t^{2k-2}}\sum_{J \subset
\{ 1,2, \ldots, k-1\}}
\beta (S^*_{\Theta , J} )_{\lambda}.
\end{eqnarray}
\end{theorem}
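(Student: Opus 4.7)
The plan is to chain together three identities that have already been assembled in the text preceding the statement: Theorem \ref{general}(i), the Cayley-trick lemma just proved, and Definition \ref{AB}. Starting from
\[
\beta(\SS_{g,0})_\lambda \;=\; \sum_{\Theta \subset \Gamma_f,\ \d\Theta \geq k-1} (1-t^2)^{m_\Theta}\,\beta(Z^*_{\Delta_\Theta})_\lambda,
\]
first, I would apply the Cayley-trick lemma to replace each factor $\beta(Z^*_{\Delta_\Theta})_\lambda$ by $t^{-(2k-2)}\sum_{J \subset \{1,\ldots,k-1\}} \beta(Z^*_{\Theta,J})_\lambda$. This reduces the computation to the virtual Betti polynomials of the hypersurfaces $Z^*_{\Theta,J} \subset \tl{T}_\Theta \times (\CC^*)^{\sharp J}$.

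Second, by Lemma \ref{INV} the quantity $\beta(Z^*_{\Theta,J})_\lambda$ depends only on the supports of the defining Laurent polynomials together with the $\tau_\Theta$-equivariance, so I would choose a generic representative for which all the hypersurfaces $Z^*_{\Theta,J}$ are simultaneously non-degenerate. The Newton polytope of the defining equation is then the join $\Theta_J$, of dimension $\d\Theta_J$, sitting in the $(\d\Theta+1+\sharp J)$-dimensional ambient torus. Definition \ref{AB} then gives
\[
\beta(Z^*_{\Theta,J})_\lambda \;=\; \beta(\Theta_J,\ \d\Theta+\sharp J)_\lambda \;=\; (t^2-1)^{\d\Theta+\sharp J-\d\Theta_J}\,\beta(\Theta_J)_\lambda.
\]

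Third, I would absorb the factor $(1-t^2)^{m_\Theta}$ into the shift. Using $m_\Theta = s_\Theta - \d\Theta - 1$, one has
\[
(1-t^2)^{m_\Theta}(t^2-1)^{\d\Theta+\sharp J-\d\Theta_J} \;=\; (-1)^{m_\Theta}(t^2-1)^{s_\Theta+\sharp J-1-\d\Theta_J},
\]
which by Definition \ref{AB} equals $(-1)^{m_\Theta}\,\beta(\Theta_J,\, s_\Theta+\sharp J -1)_\lambda$, yielding the first displayed equality in the theorem. For the second, I would identify $\beta(\Theta_J,\, s_\Theta+\sharp J-1)_\lambda$ with $\beta(S^*_{\Theta,J})_\lambda$, since by construction $S^*_{\Theta,J}$ is a non-degenerate hypersurface in $(\CC^*)^{s_\Theta+\sharp J}$ whose Newton polytope is the pyramid $\tl{\Theta}_J$ over $\Theta_J$ with apex $0$, equipped with the induced $\mu_{d_\Theta}$-action; the equality of virtual Betti polynomials then follows directly from the algorithmic characterisation of $\beta(\,\cdot\,,\,m)_\lambda$.

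The one subtle point will be justifying the simultaneous non-degeneracy used in the second step: the perturbation produced by Lemma \ref{INV} must be generic enough that all the $Z^*_{\Theta,J}$, as $J$ varies, are simultaneously non-degenerate, while leaving the Newton polytopes and the $\tau_\Theta$-equivariance of the equations untouched. This is routine because each non-degeneracy condition defines a Zariski-open dense subset of the parameter space $S_{\gen}$ appearing in Lemma \ref{INV}, and a finite intersection of such subsets remains dense. All remaining manipulations are formal bookkeeping of signs and exponents.
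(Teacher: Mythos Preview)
Your proposal is correct and follows exactly the same route as the paper: the paper's ``proof'' is precisely the chain of identities displayed in the text immediately preceding the theorem statement (Theorem~\ref{general}(i) $\to$ Cayley-trick lemma $\to$ Lemma~\ref{INV} $\to$ Definition~\ref{AB} $\to$ absorption of the $(1-t^2)^{m_\Theta}$ factor $\to$ identification with $\beta(S^*_{\Theta,J})_\lambda$), and you have reproduced each step, including the genericity argument for simultaneous non-degeneracy.
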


\begin{remark}
Although we formulated
Theorem \ref{FFC} for complete
intersection singularities, it can
be readily extended
to the case of isolated
determinantal singularities studied in \cite{E2}
by constructing their toric
resolutions.
See \cite{E2} for the
construction of toric resolutions of
determinantal singularities.
\end{remark}

By Theorems \ref{general}, \ref{FFC}
and the results in Sections
\ref{sec:2}, \ref{sec:3}, we can
calculate the numbers of the
Jordan blocks for the eigenvalues $\lambda \not= 1$
in $\Phi_{n-k,0}: H^{n-k} (F_0;\CC) \simeq
H^{n-k} (F_0;\CC)$ as follows.
First, for a compact face $\Theta \prec
\Gamma_+(f)$ such that $\d \Theta \geq k-1$
and a subset $J \subset
\{ 1,2, \ldots, k-1\}$ we define an
integer $c(\Theta, J) \in \ZZ_+$ by
$c(\Theta, J)= \d \Theta -
\d (\gamma_k^{\Theta}+
\sum_{j \in J} \kappa_j^{\Theta})$.
Then we have $\d \Theta_J=\d \Theta -
c(\Theta, J)+ \sharp J$. Moreover
for $l \geq 1$ we define a finite
subset $R(\Theta_J, l) \subset [0, \d \Theta_J]
\cap \ZZ$ by
\begin{equation}
R(\Theta_J, l)=
\{ 0 \leq r \leq \d \Theta_J \ |\
n+k-3+l \equiv r \ {\rm mod} \  2 \}.
\end{equation}
For each $r \in R(\Theta_J, l)$, we set
\begin{equation}
e(\Theta_J, l)_r :=\frac{n+k-3+l-r}{2}\in \ZZ_+.
\end{equation}
Now in the situation as above, let $\lambda \in
\CC^* \setminus\{1\}$ and $i \geq
1$. Then by Theorems \ref{general} and \ref{FFC},
the number of the Jordan blocks for
the eigenvalue $\lambda$ with
sizes $\geq i$ in $\Phi_{n-k,0}$ is
\begin{equation}
\dsum_{ \begin{subarray}{c}
\Theta \subset \Gamma_f,
 \\
 \d \Theta \geq k-1
 \end{subarray} }
 \dsum_{J \subset \{ 1, \ldots, k-1\}}
(-1)^{n-k+ c(\Theta, J)}
\left\{ \sum_{l=i,i+1} \left(
\dsum_{r \in  R(\Theta_J, l)}
(-1)^{e(\Theta_J, l)_r}
\binom{m_{\Theta}+c(\Theta, J)}{e(\Theta_J, l)_r}
 \beta_r(\Theta_J)_{\lambda}
\right) \right\}.
\end{equation}
Note that we can always calculate
the above virtual Betti numbers
$\beta_r(\Theta_J)_{\lambda} \in \ZZ$
by our algorithm at the end of Section
\ref{sec:3}. We can construct polytopes which
majorize the join $\Theta_J$
much easier than that for arbitrary
polytopes of the same dimension.
From now on,
assume moreover that
for any compact face $\Theta \prec
\Gamma_+(f)$ such that $\d \Theta \geq k-1$
the corresponding faces $\gamma_j^{\Theta}$
($1 \leq j \leq k$) are simplicial and
transversal: $\d (\sum_{j=1}^k \gamma_j^{\Theta})
=\sum_{j=1}^k \d \gamma_j^{\Theta}$.
Note that this condition was used in
\cite{E1} to describe the
difference of the Euler characteristics
of two ``real" Milnor fibers over real
complete intersections. Under this condition,
the join $\Theta_J$ is prime and
hence $\tl{\Theta}_J$ is pseudo-prime.
Therefore, by Proposition \ref{PLP} we have

\begin{eqnarray}
\beta_r(\Theta_J)_{\lambda}
& = &
(-1)^{\d \Theta_J+r}
\sum_{\begin{subarray}{c}
\Gamma \prec \Theta_J, \\ \d \Gamma =r
\end{subarray}} \left\{
\sum_{\gamma \prec \Gamma}
(-1)^{\d \gamma}
\Vol_{\ZZ}(\gamma )_{\lambda}
\right\}
\\
 & = &
(-1)^{\d \Theta_J+r}
\sum_{l=0}^r
\left\{ \sum_{\begin{subarray}{c}
\gamma \prec \Theta_J, \\ \d \gamma =l
\end{subarray}}
(-1)^{l}
\binom{\d \Theta_J -l}{r-l}
\Vol_{\ZZ}(\gamma )_{\lambda}
\right\},
\end{eqnarray}
where by using the lattice distance
$d(\gamma )>0$ of
the face $\gamma \prec \Theta_J$ from the
origin $(0,0) \in
\tl{\LL}_{\Theta} \times \RR^{\sharp J}$
we define the integer
$\Vol_{\ZZ}(\gamma )_{\lambda} \in \ZZ_+$ by
\begin{equation}
\Vol_{\ZZ}(\gamma )_{\lambda}
= \begin{cases}
\Vol_{\ZZ}(\gamma )
& (\lambda^{d(\gamma )}=1), \\
0 & (\text{otherwise}).
\end{cases}
\end{equation}

Finally to end this section, we shall introduce
an analogue of the Steenbrink
conjecture proved by
Varchenko-Khovanskii \cite{K-V} and Saito
\cite{Saito-3}.

\begin{definition}(Ebeling-Steenbrink \cite{E-S}) As
a Puiseux series, we define the
non-integral part $\sp_g(t)$ of the
spectrum of $g: W \longrightarrow \CC$
at the origin $0 \in \CC^n$ by
\begin{equation}
\sp_g(t)
= \sum_{b \in (0,1) \cap \QQ}
\left[ \sum_{i=0}^{n-k}
 \left\{ \sum_{q \geq 0}
e^{i,q}([H_g])_{\exp(-2\pi
\sqrt{-1} b)}\right\}
t^{i+ b} \right] .
\end{equation}
\end{definition}
By Proposition \ref{NJB} (i)
the support of $\sp_g (t)$
is contained in the open interval
$(0,n-k+1)$ and has the symmetry
\begin{equation}
\sp_g(t)=t^{n-k+1} \sp_g ( \frac{1}{t} )
\end{equation}
with center at $\frac{n-k+1}{2}$. Moreover by
the above arguments (the Cayley trick)
and the proof of
\cite[Theorem 5.10]{M-T-4}, we immediately
obtain the following
explicit description of $\sp_g (t)$. For
each $\Theta \prec
\Gamma_{+}(f)$ such that $\Theta \subset
\Gamma_f$ and $J \subset
\{ 1,2, \ldots, k-1\}$ let
$\Cone(\Theta_J)=\RR_+\Theta_J
\subset \tl{\LL}_{\Theta}
\times \RR^{\sharp J}$ be the
cone generated by $\Theta_J$ and
$h_{\Theta, J} \colon
\Cone(\Theta_J) \longrightarrow
\RR$ the linear function such that
$h_{\Theta, J}|_{\Theta_J} \equiv 1$.
Then we define the Puiseux series
$P_{\Theta, J}(t)$ by
\begin{equation}
P_{\Theta, J}(t)=\sum_{b \in \QQ_+
\setminus \ZZ_+}
\sharp \{ v \in
\Cone(\Theta_J) \cap \ZZ^{n+\sharp J}
 \ | \ h_{\Theta, J}(v)
=b \} t^{b}.
\end{equation}

\begin{theorem}\label{SPT}
In the situation as above, we have
\begin{equation}
\sp_g(t)=(-1)^{n-k}  \dsum_{\Theta \subset
\Gamma_f, \d \Theta \geq k-1} \left\{
\dsum_{J \subset \{ 1, \ldots, k-1\}}
(-1)^{\d \Theta + \sharp J}
(1-t)^{s_{\Theta} + \sharp J}
P_{\Theta, J}(t) \right\}
\cdot t^{-k+1}.
\end{equation}
\end{theorem}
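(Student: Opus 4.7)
The plan is to prove Theorem \ref{SPT} by pushing the equality through the chain of reductions that was already carried out in the paper for virtual Betti polynomials, and then invoking a Varchenko--Khovanskii--Saito-type spectrum formula on each non-degenerate hypersurface piece. First, by Proposition \ref{NJB}(i) the series $\sp_g(t)$ can be recovered from the Hodge numbers $e^{p,q}([H_g])_\lambda$ for $\lambda\neq 1$, and by Theorem \ref{ITM} these are $(-1)^{n-k}$ times the corresponding numbers of $\chi_h(\SS_{g,0})$. So it is enough to compute each eigenspace part $\chi_h(\SS_{g,0})_\lambda$ as an element of $\KK_0(\HS)$, keeping track of the full $(p,q)$-decomposition rather than only the total virtual Betti polynomial.

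Next I would apply the face decomposition of Theorem \ref{general}(i), which reduces the computation to the classes $(1-\LL)^{m_\Theta}\cdot [Z^*_{\Delta_\Theta}]$, indexed by compact faces $\Theta\subset\Gamma_f$ of dimension $\geq k-1$. The factor $(1-\LL)^{m_\Theta}$ contributes only a $(1-t^2)^{m_\Theta}$ in the spectrum, coming from trivially acting tori of dimension $m_\Theta$. To each remaining complete-intersection class I would then apply the Cayley trick exactly as in the proof of Theorem \ref{FFC}: pass from the complete intersection $Z^*_{\Delta_\Theta}\subset\tl T_\Theta$ to the family of non-degenerate hypersurfaces $S^*_{\Theta,J}\subset(\CC^*)^{s_\Theta+\sharp J}$ with Newton polytope $\tl\Theta_J$ (the join of $\gamma_k^\Theta$ and the $\kappa_j^\Theta$, $j\in J$). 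The point is that this trick preserves the full mixed Hodge structure with its monodromy action, so the spectrum of the complete intersection piece is computed from the spectra of these hypersurfaces by the same alternating sum over $J\subset\{1,\ldots,k-1\}$, with the same signs $(-1)^{m_\Theta+\d\Theta+\sharp J}$ and the same $t^{-2k+2}$ shift, which after accounting for the $t^{i+b}$ shape of the spectrum contributes the global $t^{-k+1}$.

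It then remains to express the non-integral spectrum of each non-degenerate hypersurface $S^*_{\Theta,J}$, with its $\mu_{d_\Theta}$-action, in terms of lattice points in the cone $\Cone(\Theta_J)$. This is the Varchenko--Khovanskii formula, refined by Saito to the equivariant setting: identifying the height function $h_{\Theta,J}$ with the linear form that measures the monodromy eigenvalue $\exp(-2\pi\sqrt{-1}\cdot h_{\Theta,J}(v))$, one finds that the $\exp(-2\pi\sqrt{-1}b)$-eigenpart of the spectrum of $S^*_{\Theta,J}$ is given by
\begin{equation*}
(-1)^{\d\tl\Theta_J-1}(1-t)^{s_\Theta+\sharp J}\,P_{\Theta,J}(t)
\end{equation*}
restricted to the $t^b$-coefficient (for $b\in\QQ_+\setminus\ZZ_+$), exactly as in \cite[Theorem 5.10]{M-T-4} and \cite{K-V, Saito-3}. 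Substituting this into the face-and-Cayley sum obtained in the previous step and tallying signs yields the claimed formula.

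The main obstacle is the last step: one must verify that the Varchenko--Khovanskii--Saito formula in its equivariant form really produces the generating function $P_{\Theta,J}(t)$ with the prescribed monodromy bookkeeping. Concretely, one has to check that the $\mu_{d_\Theta}$-eigenvalue associated with a lattice point $v\in\Cone(\Theta_J)\cap\ZZ^{n+\sharp J}$ is precisely $\exp(-2\pi\sqrt{-1}\,h_{\Theta,J}(v))$, so that summing over $b\in\QQ_+\setminus\ZZ_+$ recovers the non-integral part of the spectrum and not merely its Betti degeneration. Once this identification is established (it is built into the definition of $\tau_\Theta$ via the height $\height(\,\cdot\,,K_\Theta)/d_\Theta$), the sign and exponent accounting is routine and parallels the argument already deployed for the virtual Betti polynomial in Theorem \ref{FFC}.
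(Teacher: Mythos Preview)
Your proposal is correct and follows essentially the same route as the paper, which simply says the result follows ``by the above arguments (the Cayley trick) and the proof of \cite[Theorem 5.10]{M-T-4}'': reduce $\sp_g$ to $\chi_h(\SS_{g,0})$ via Theorem \ref{ITM}, decompose by faces via Theorem \ref{general}(i), pass to hypersurfaces $S^*_{\Theta,J}$ by the Cayley trick as in Theorem \ref{FFC}, and then invoke the equivariant Varchenko--Khovanskii--Saito formula for the hypersurface spectrum. One small slip: the factor $(1-\LL)^{m_\Theta}$ contributes $(1-t)^{m_\Theta}$ to the spectrum (since $\sp_g$ records only the $p$-degree, not $p+q$), not $(1-t^2)^{m_\Theta}$; this is consistent with the $(1-t)^{s_\Theta+\sharp J}$ in the final formula and with the halving of the shift $t^{-2k+2}\leadsto t^{-k+1}$ that you already noted.
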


\section{The numbers of Jordan blocks
in the monodromies over C.I.}\label{sec:5}

In this section, by using the results in the
previous sections, we prove some combinatorial
formulas for the Jordan normal forms of the
(local) monodromies over complete intersection
subvarieties of $\CC^n$. We inherit the situation
and the notations in Section \ref{sec:4}. Then
our primary interest here is to describe the
numbers of the maximal (and the
second maximal) Jordan blocks for the eigenvalues
$\lambda \not= 1$ in the monodromy
$\Phi_{n-k,0}\colon H^{n-k}(F_0;\CC) \simeq
H^{n-k}(F_0;\CC)$ in terms of the Newton
polyhedrons $\Gamma_+(f_1), \Gamma_+(f_2), \ldots,
\Gamma_+(f_k)$. We fix
$\lambda \in \CC^*\setminus\{ 1\}$ and
 a face $\Theta \prec \Gamma_+(f)$ such
that $\Theta \subset \Gamma_f$. First,
for $J \subset \{ 1,2, \ldots, k-1\}$ let
$K_{\Theta, J}$ be the affine linear subspace
of $\tl{\LL}_{\Theta}
\simeq \RR^{\d \Theta +1}$ which is parallel
to the affine span of the Minkowski sum
$\gamma_k^{\Theta}+
\sum_{j \in J} \kappa_j^{\Theta}$
and contains $\gamma_k^{\Theta}$. Then we
define an integer $d_{\Theta, J}>0$
to be the lattice
distance of $K_{\Theta, J}$ from the
origin $0 \in \tl{\LL}_{\Theta}$. Note
that if $J=\{ 1,2, \ldots, k-1\}$
(resp. $J=\emptyset$) $d_{\Theta, J}$ is
equal to $d_{\Theta}$ (resp. is the
lattice distance $d(\gamma_k^{\Theta})$
of $\gamma_k^{\Theta}$ from
the origin $0 \in \tl{\LL}_{\Theta}$). Moreover,
for any $J \subset \{ 1,2, \ldots, k-1\}$ we see that
$d_{\Theta, J}$ divides $d_{\Theta}$.

\begin{definition}
For $J \subset \{ 1,2, \ldots, k-1\}$
we denote the difference $\d (\gamma_k^{\Theta}+
\sum_{j \in J} \kappa_j^{\Theta})- \sharp J$
by $\delta(\Theta, J)$.
\end{definition}
The following lemma is essentially due to
Sturmfels \cite{Sturmfels}.

\begin{lemma}\label{INE}
If $\delta(\Theta, J) \geq 0$ for any
$J \subset \{ 1,2, \ldots, k-1\}$, then the
set $\{ J \ | \ \delta(\Theta, J)=0 \}$
is closed by unions $\cup$ and intersections
$\cap$. In particular, if moreover
$\{ J \ | \ \delta(\Theta, J)=0 \} \not= \emptyset$
(including the case where $\{ J \ | \ \delta(\Theta, J)=0 \}
= \{ \emptyset \}$), it has the (unique)
maximal element $J_0$.
\end{lemma}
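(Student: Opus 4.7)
The plan is to reduce the statement to the submodularity of a dimension function on subsets of $\{1,\ldots,k-1\}$. First I would rewrite $\delta(\Theta, J)$ in purely linear-algebraic terms: since the dimension of a Minkowski sum of polytopes depends only on the linear subspaces parallel to their affine spans, writing $V_0$ (resp.\ $V_j$) for the linear subspace of $\tl{\LL}_{\Theta}$ parallel to the affine span of $\gamma_k^{\Theta}$ (resp.\ $\kappa_j^{\Theta}$), one has
\begin{equation}
\delta(\Theta, J) = \dim\Bigl(V_0 + \sum_{j \in J} V_j\Bigr) - \sharp J.
\end{equation}

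Next I would establish submodularity of $\delta(\Theta, \cdot)$. Set $\rho(J) := \dim(V_0 + \sum_{j \in J} V_j)$ and $A := V_0 + \sum_{j \in J} V_j$, $B := V_0 + \sum_{j \in J'} V_j$. Then $A + B = V_0 + \sum_{j \in J \cup J'} V_j$ has dimension $\rho(J \cup J')$, while $A \cap B \supseteq V_0 + \sum_{j \in J \cap J'} V_j$, so $\dim(A \cap B) \geq \rho(J \cap J')$. From the identity $\dim A + \dim B = \dim(A+B) + \dim(A \cap B)$ one deduces $\rho(J \cup J') + \rho(J \cap J') \leq \rho(J) + \rho(J')$. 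Combined with the modularity $\sharp(J \cup J') + \sharp(J \cap J') = \sharp J + \sharp J'$, this gives the submodularity
\begin{equation}
\delta(\Theta, J \cup J') + \delta(\Theta, J \cap J') \leq \delta(\Theta, J) + \delta(\Theta, J').
\end{equation}

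Assuming $\delta(\Theta, J) \geq 0$ for every $J$, if $J, J'$ both lie in $\{ J \mid \delta(\Theta, J) = 0\}$ the submodularity inequality forces
\begin{equation}
0 \leq \delta(\Theta, J \cup J') + \delta(\Theta, J \cap J') \leq 0,
\end{equation}
so each summand vanishes; hence the set is closed under $\cup$ and $\cap$. Closure under unions then shows that, whenever the set is non-empty, the (finite) union of all of its members again lies in it and is the unique maximal element $J_0$.

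There is no substantive obstacle here: the only delicate point is correctly translating the dimension of a Minkowski sum of polytopes into the dimension of a sum of the associated linear subspaces, so that one can apply the standard submodular-rank argument. The reference to Sturmfels \cite{Sturmfels} serves to credit the origin of this kind of combinatorial observation in the polytopal setting.
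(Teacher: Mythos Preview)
Your proof is correct and follows essentially the same approach as the paper: both establish the submodularity inequality $\delta(\Theta, I\cup J)+\delta(\Theta, I\cap J)\leq \delta(\Theta, I)+\delta(\Theta, J)$ via the submodularity of the dimension of Minkowski sums together with the modularity of cardinality, then use the nonnegativity hypothesis to force both $\delta(\Theta, I\cup J)$ and $\delta(\Theta, I\cap J)$ to vanish. The only difference is that you spell out the dimension inequality via the formula $\dim A+\dim B=\dim(A+B)+\dim(A\cap B)$ for the associated linear spans, whereas the paper simply asserts it as easy.
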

\begin{proof}
Let $I, J \subset \{ 1,2, \ldots, k-1\}$.
Then we can easily prove that
\begin{eqnarray}
 & &
\d (\gamma_k^{\Theta}+\sum_{j \in I}
\kappa_j^{\Theta})
+
\d (\gamma_k^{\Theta}+\sum_{j \in J}
\kappa_j^{\Theta})
\\
& \geq &
\d (\gamma_k^{\Theta}+\sum_{j \in I \cap J}
\kappa_j^{\Theta})
+
\d (\gamma_k^{\Theta}+\sum_{j \in I \cup J}
\kappa_j^{\Theta}).
\end{eqnarray}
Combining this inequality with the one $\sharp I
+\sharp J =\sharp (I \cap J) + \sharp (I \cup J)$
we obtain
\begin{equation}
\delta(\Theta, I)+ \delta(\Theta, J) \geq
\delta(\Theta, I \cap J) +
\delta(\Theta, I \cup J) \geq 0,
\end{equation}
from which the assertion immediately follows.
\end{proof}

\begin{definition}
We define an integer $E(\Theta)_{\lambda}$
to be $0$ if $\min_{J \subset \{ 1,2, \ldots, k-1\}}
\delta(\Theta, J) \not= 0$, and otherwise by
using the maximal element $J_0$ of
$\{ J \ | \ \delta(\Theta, J)=0 \}$ we set
\begin{equation}
E(\Theta)_{\lambda}
=\begin{cases}
MV(\kappa_{j_1}^{\Theta},
\kappa_{j_2}^{\Theta}, \ldots,
\kappa_{j_m}^{\Theta})
 & (\lambda^{d_{\Theta, J_0}}=1),
\\
0 & (\text{otherwise}),
\end{cases}
\end{equation}
where $J_0=\{ j_1, j_2, \ldots, j_m\}$,
$\sharp J_0=m$ and $MV(\kappa_{j_1}^{\Theta},
\ldots, \kappa_{j_m}^{\Theta}) \in \ZZ_+$
is the normalized $m$-dimensional mixed
volume of $\kappa_{j_1}^{\Theta},
\ldots, \kappa_{j_m}^{\Theta}$. Note that
by $\delta(\Theta, J_0)=0$ we have
$\d (\kappa_{j_1}^{\Theta}+
\cdots + \kappa_{j_m}^{\Theta}) \leq \sharp J_0
=m$ and the $m$-dimensional
mixed volume $MV(\kappa_{j_1}^{\Theta},
\ldots, \kappa_{j_m}^{\Theta})$ makes sense.
\end{definition}
In particular, if $\min_{J \subset
\{ 1,2, \ldots, k-1\}}
\delta(\Theta, J) = 0$ and
$\{ J \ | \ \delta(\Theta, J)=0 \}
= \{ \emptyset \}$ we set $E(\Theta)_{\lambda}=1$
or $0$ depending on whether
$\lambda^{d(\gamma_k^{\Theta})}=1$ or not.

\begin{theorem}\label{MPS}
\begin{enumerate}
\item The degree of the virtual Betti polynomial
$\beta (\SS_{g,0})_{\lambda} \in \ZZ [t]$
($\lambda \not= 1$) is bounded by $2n-2k$.
In particular, the sizes of the Jordan blocks
for the eigenvalues $\lambda \not= 1$ in $\Phi_{n-k,0}$
are bounded by $n-k+1$.
\item The number of the Jordan blocks for
the eigenvalue $\lambda \not= 1$
with the maximal possible size
$n-k+1$ in $\Phi_{n-k,0}$ is equal to
\begin{equation}
\sum_{\begin{subarray}{c}
\Theta \subset \Gamma_f, s_{\Theta}=n,
\\ \d \Theta \geq k-1 \end{subarray}}
(-1)^{\d \Theta -(k-1)}E(\Theta)_{\lambda}.
\end{equation}
\end{enumerate}
\end{theorem}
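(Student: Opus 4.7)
My plan for (i) is to combine Proposition \ref{NJB}(i) with Theorem \ref{ITM}: since $[H_g]=(-1)^{n-k}\chi_h(\SS_{g,0})$ and $e^{p,q}([H_g])_\lambda=0$ outside $[0,n-k]\times[0,n-k]$, we have $\beta_i(\SS_{g,0})_\lambda=0$ for $i>2(n-k)$, yielding the degree bound. Proposition \ref{NJB}(ii) then expresses the number of Jordan blocks of size $\geq i$ as $(-1)^{n-k}\{\beta_{n-k-1+i}(\SS_{g,0})_\lambda+\beta_{n-k+i}(\SS_{g,0})_\lambda\}$, which vanishes for $i\geq n-k+2$, giving the block-size bound. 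For (ii), the number of blocks of maximum size $n-k+1$ equals (size $\geq n-k+1$) minus (size $\geq n-k+2$), reducing, by (i) and Proposition \ref{NJB}(ii), to $(-1)^{n-k}\beta_{2(n-k)}(\SS_{g,0})_\lambda$; the task thus becomes computing this top coefficient.

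I would then expand $\beta(\SS_{g,0})_\lambda$ via Theorem \ref{general}(i) and extract the coefficient of $t^{2(n-k)}$. Since $\d Z^*_{\Delta_\Theta}=\d\Theta+1-k$, each summand $(1-t^2)^{m_\Theta}\beta(Z^*_{\Delta_\Theta})_\lambda$ has degree at most $2m_\Theta+2(\d\Theta+1-k)=2(s_\Theta-k)$, attaining $2(n-k)$ only when $s_\Theta=n$. A direct check shows that, for such $\Theta$, the coefficient of $t^{2(n-k)}$ is forced to be the product of the two leading terms, namely $(-1)^{m_\Theta}\beta_{2d}(Z^*_{\Delta_\Theta})_\lambda=(-1)^{m_\Theta}e^{d,d}(Z^*_{\Delta_\Theta})_\lambda$ with $d=\d\Theta+1-k$. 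Since $Z^*_{\Delta_\Theta}$ is smooth and affine, this equals $\dim H^{2d}_c(Z^*_{\Delta_\Theta})_\lambda$, the $\lambda$-eigenspace dimension of the $\mu_{d_\Theta}$-permutation action on the set of its connected components. The sign identity $(-1)^{n-k}\cdot(-1)^{m_\Theta}=(-1)^{\d\Theta-(k-1)}$, valid when $s_\Theta=n$, then yields the prefactor in the stated formula.

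It remains to identify $e^{d,d}(Z^*_{\Delta_\Theta})_\lambda$ with $E(\Theta)_\lambda$. My approach is to project $Z^*_{\Delta_\Theta}$ along the normal to $K_\Theta$: since $\tl{g}_k^\Theta$ is supported in $\{0\}\cup\gamma_k^\Theta$, the equation $\tl{g}_k^\Theta=0$ is cyclic of degree $d_\Theta$ in the height variable, and whenever a polytope $\kappa_j^\Theta$ becomes linearly dependent on the already-eliminated directions, the equation $g_j^\Theta=0$ contributes a further cyclic factor. Iterating this elimination realizes $Z^*_{\Delta_\Theta}$ as a cyclic \'etale cover of degree $d_{\Theta,J_0}$ over a non-degenerate complete intersection in a lower-dimensional torus cut out by $\{g_j^\Theta:j\in J_0\}$, where $J_0$ is the maximal subset furnished by the submodularity of Lemma \ref{INE}. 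Theorem \ref{BKK} then computes the number of top-dimensional $\mu_{d_\Theta}$-orbits as $MV(\kappa_{j_1}^\Theta,\ldots,\kappa_{j_m}^\Theta)$ under the eigenvalue filter $\lambda^{d_{\Theta,J_0}}=1$, matching $E(\Theta)_\lambda$ exactly. The main obstacle will be in making this cover/elimination argument precise: in particular, one has to verify that the maximal $J_0$ from Lemma \ref{INE} correctly captures both the branch data of the cyclic cover and the polytopes defining its base, and that $e^{d,d}(Z^*_{\Delta_\Theta})_\lambda$ correctly reads off the resulting count of $\lambda$-eigenspace orbits.
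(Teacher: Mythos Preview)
Your approach is genuinely different from the paper's on both parts, and largely sound, but the final step of (ii) is only sketched.

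\medskip

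\textbf{Part (i).} You invoke Proposition~\ref{NJB}(i) together with Theorem~\ref{ITM} to get the degree bound on $\beta(\SS_{g,0})_\lambda$ abstractly. The paper instead proves (i) combinatorially: it passes through the Cayley trick (Theorem~\ref{FFC}), bounds the degree of each summand $\frac{(-1)^{m_\Theta}}{t^{2k-2}}\beta(\Theta_J,s_\Theta+\sharp J-1)_\lambda$ by $2s_\Theta-2k-\delta(\Theta,J)\le 2n-2k$, and handles separately the case $\min_J\delta(\Theta,J)<0$ by observing $Z^*_{\Delta_\Theta}=\emptyset$. Your route is shorter; just be aware that the paper remarks that Proposition~\ref{NJB}(i) ``can be checked by explicitly calculating the mixed Hodge numbers of $\SS_{g,0}$'', so if one reads that as the intended proof of \ref{NJB}(i), your argument becomes circular. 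If \ref{NJB}(i) is taken as a consequence of Saito's theory (as the preceding sentence in the paper suggests), you are fine.

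\medskip

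\textbf{Part (ii).} You correctly reduce the count to $(-1)^{n-k}\beta_{2(n-k)}(\SS_{g,0})_\lambda$ and, via Theorem~\ref{general}(i), to $\sum_{s_\Theta=n}(-1)^{m_\Theta}e^{d,d}(Z^*_{\Delta_\Theta})_\lambda$ with $d=\dim\Theta+1-k$. The identification $e^{d,d}(Z^*_{\Delta_\Theta})_\lambda=E(\Theta)_\lambda$ is the heart of the matter, and here the two approaches diverge. The paper does \emph{not} analyse the components of $Z^*_{\Delta_\Theta}$ directly. Instead it uses Theorem~\ref{FFC} to rewrite $(1-t^2)^{m_\Theta}\beta(Z^*_{\Delta_\Theta})_\lambda$ as a sum over $J\subset\{1,\ldots,k-1\}$ of hypersurface contributions, shows by degree-counting that only $J\subset J_0$ survive at top degree, \emph{reverses} the Cayley trick on this subsystem to obtain $\beta(Z^*_{\Delta^0_\Theta})_\lambda$ for the complete intersection cut out by $\{g_j^\Theta:j\in J_0\}\cup\{\tilde g_k^\Theta\}$ only, and finally uses the product decomposition $Z^*_{\Delta^0_\Theta}\simeq(\CC^*)^{n-m-1}\times D^*_{\Delta^0_\Theta}$ with $D^*_{\Delta^0_\Theta}$ zero-dimensional. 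On $D^*_{\Delta^0_\Theta}$ the count is explicit: by Theorem~\ref{BKK} the locus $\{g_{j_1}^\Theta=\cdots=g_{j_m}^\Theta=0\}\subset(\CC^*)^{m+1}$ is a disjoint union of $MV(\kappa_{j_1}^\Theta,\ldots,\kappa_{j_m}^\Theta)$ lines, on each of which $\tilde g_k^\Theta$ has $d_{\Theta,J_0}$ zeros permuted freely by $\mu_{d_{\Theta,J_0}}$.

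Your proposed direct argument---realise $Z^*_{\Delta_\Theta}$ as a cyclic cover of degree $d_{\Theta,J_0}$ over a base cut out by $\{g_j^\Theta:j\in J_0\}$---has a gap you yourself flag: the equations $g_j^\Theta$ for $j\notin J_0$ are still present in $Z^*_{\Delta_\Theta}$, and your sketch does not explain how they are absorbed. What one actually needs is that the fibres of the projection $Z^*_{\Delta_\Theta}\to D^*_{\Delta^0_\Theta}$ (cut out by the remaining $g_j^\Theta$, $j\notin J_0$, inside a torus factor) are non-empty and connected; this is a Khovanskii-type connectedness statement that ultimately rests on $\delta(\Theta,J)>0$ for all $J\not\subset J_0$. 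The paper's Cayley detour is precisely what replaces this connectedness argument: the degree inequality $\delta(\Theta,J)>0$ for $J\not\subset J_0$ directly kills those $J$-summands at top degree, without ever needing to discuss components.
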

\begin{proof}
\noindent (A) Assume that there exists
$J \subset \{ 1,2, \ldots, k-1\}$ such that
$\delta(\Theta, J)<0$. Then we have
\begin{equation}
\delta(\Theta, J)= \d
(\Delta_{\gamma_k^{\Theta}}+\sum_{j \in J}
\kappa_j^{\Theta})-(\sharp J+1) <0.
\end{equation}
By the dimensional reason, as a non-degenerate C.I.
in $\tl{T}_{\Theta}$ we have
\begin{equation}
\{ g_j^{\Theta}(x) =0 \ (j \in J), \quad
\tl{g}_k^{\Theta}(x)=0 \} =\emptyset.
\end{equation}
Since $Z^*_{\Delta_{\Theta}} \subset
\tl{T}_{\Theta}$ is contained in this set,
we obtain $Z^*_{\Delta_{\Theta}} =
\emptyset$ and hence
\begin{equation}
\frac{(-1)^{m_{\Theta}}}{t^{2k-2}}\sum_{J \subset
\{ 1,2, \ldots, k-1\}}
\beta (\Theta_J, s_{\Theta}
+\sharp J -1 )_{\lambda} = 0.
\end{equation}
\noindent (B) Next assume that $\delta(\Theta, J)
\geq 0$ for any $J \subset \{ 1,2, \ldots, k-1\}$.
Then for any $J \subset \{ 1,2, \ldots, k-1\}$ we have
\begin{eqnarray}
 & &
{\rm deg} \left\{ \frac{(-1)^{m_{\Theta}}}{t^{2k-2}}
\beta (\Theta_J, s_{\Theta}
+\sharp J -1 )_{\lambda} \right\}
\\
& \leq &
2(s_{\Theta}+\sharp J -1-\d \Theta_J) +
\d \Theta_J -2k+2
\\
& = &
2 s_{\Theta}-2k +\sharp J -
\d (\gamma_k^{\Theta}+
\sum_{j \in J} \kappa_j^{\Theta})
\\
& \leq &
2 s_{\Theta}-2k  \leq 2n-2k.
\end{eqnarray}
So the assertion (i) was proved.
By the above calculations, if $s_{\Theta}<n$
or $\delta(\Theta, J)> 0$ for any
$J \subset \{ 1,2, \ldots, k-1\}$ there is no
contribution to the leading coefficient
$\beta_{2n-2k}(\SS_{g,0})_{\lambda}$
from the face $\Theta \prec \Gamma_+(f)$.
Therefore, to prove the assertion (ii),
we have only to consider the compact faces
$\Theta \prec \Gamma_+(f)$ such that
$s_{\Theta}=n$ and $\min_{J \subset
\{ 1,2, \ldots, k-1\}}
\delta(\Theta, J) =0$. In this case, for the
maximal element $J_0$ of the set
$\{ J \ | \ \delta(\Theta, J)=0 \}$ we have
\begin{equation}
J \not\subset J_0 \ \Longrightarrow \
{\rm deg} \left\{ \frac{(-1)^{m_{\Theta}}}{t^{2k-2}}
\beta (\Theta_J, s_{\Theta}
+\sharp J -1 )_{\lambda} \right\} <2n-2k.
\end{equation}
This implies that
\begin{equation}\label{for}
\sum_{J \subset
\{ 1,2, \ldots, k-1\}}
\frac{(-1)^{m_{\Theta}}}{t^{2k-2}}
\beta (\Theta_J, s_{\Theta}
+\sharp J -1 )_{\lambda}
 \equiv
\sum_{J \subset J_0}
\frac{(-1)^{m_{\Theta}}}{t^{2k-2}}
\beta (\Theta_J, s_{\Theta}
+\sharp J -1 )_{\lambda}
\end{equation}
modulo polynomials of degree less than
$2n-2k$. Set $J_0=\{ j_1, j_2, \ldots, j_m\}$,
$\sharp J_0=m$ and let
\begin{equation}
Z^*_{\Delta^{0}_{\Theta}}=
\{ g_{j_1}^{\Theta}(x) = \cdots =g_{j_m}^{\Theta}(x)
=\tl{g}_k^{\Theta}(x)=0 \}
\subset (\CC^*)^{s_{\Theta}}=(\CC^*)^n
\end{equation}
be the non-degenerate complete intersection
subvariety of codimension $m+1$ in
$(\CC^*)^n$ with a natural action of
$\mu_{d_{\Theta}}$. Then by the arguments
in Section \ref{sec:4} we have
\begin{equation}
\sum_{J \subset J_0}
\frac{1}{t^{2m}}
\beta (\Theta_J, n +\sharp J -1 )_{\lambda}
=\beta (Z^*_{\Delta^{0}_{\Theta}} )_{\lambda}.
\end{equation}
Moreover, since by $\delta(\Theta, J_0)=0$
we have $\d (\Delta_{\gamma_k^{\Theta}}
+\sum_{j \in J_0} \kappa_j^{\Theta})=\sharp J_0
+1=m+1$, we can take another $0$-dimensional
non-degenerate complete intersection
subvariety
\begin{equation}
D^*_{\Delta^{0}_{\Theta}}=
\{ g_{j_1}^{\Theta}(x) = \cdots =g_{j_m}^{\Theta}(x)
=\tl{g}_k^{\Theta}(x)=0 \}
\subset (\CC^*)^{m+1}
\end{equation}
in $(\CC^*)^{m+1}$ such that
$Z^*_{\Delta^{0}_{\Theta}}
\simeq (\CC^*)^{n-m-1}
\times D^*_{\Delta^{0}_{\Theta}}$.
This implies that the right hand side of
\eqref{for} is equal to
\begin{equation}
\frac{(-1)^{m_{\Theta}}}{t^{2k-2m-2}}
(t^2-1)^{n-m-1}
\beta (D^*_{\Delta^{0}_{\Theta}}
)_{\lambda}
\end{equation}
whose leading coefficient
(of degree $2n-2k$) is $(-1)^{m_{\Theta}}
\beta (D^*_{\Delta^{0}_{\Theta}}
)_{\lambda}=(-1)^{n-\d \Theta -1}$
$\beta_0 (D^*_{\Delta^{0}_{\Theta}}
)_{\lambda} \in \ZZ$.
By Theorem \ref{BKK}, the subset
\begin{equation}
\{ g_{j_1}^{\Theta}(x) = \cdots =g_{j_m}^{\Theta}(x)
 =0 \} \subset (\CC^*)^{m+1}
\end{equation}
of $(\CC^*)^{m+1}$ is a disjoint union
of $MV(\kappa_{j_1}^{\Theta}, \ldots,
\kappa_{j_m}^{\Theta})$ copies of the complex
line $\CC$ and the restriction of
$\tl{g}_k^{\Theta}$ to each line
vanishes at exactly $d_{\Theta, J_0}$
distinct points.
Moreover we can easily see that
the action of the generator of
$\mu_{d_{\Theta}}$ on these
$d_{\Theta, J_0}$ points corresponds to
that of their automorphism group
$\mu_{d_{\Theta, J_0}}$. Then the
assertion (ii) follows.
This completes the proof.
\end{proof}

Recall that we say polyhedrons
$\Delta_1, \ldots, \Delta_k$
in $\RR^n$ majorize each other if
their dual fans are the same. For example,
if $\Delta_1, \ldots, \Delta_k$ are
similar, they majorize each other.

\begin{corollary}
\begin{enumerate}
\item
If $k=2$, then the number of
the Jordan blocks for
the eigenvalue $\lambda \not= 1$
with the maximal possible size
$n-k+1=n-1$ in $\Phi_{n-k,0}$ is equal to
\begin{equation}
\sum_{\begin{subarray}{c}
\Theta \subset \Gamma_f, s_{\Theta}=n,
\\ \d \Theta =1,
\lambda^{d_{\Theta}}=1 \end{subarray}}
{\rm length}_{\ZZ}(\gamma_1^{\Theta})
+ \sum_{d=2}^{n-1} (-1)^{d-1}
 \sharp \left\{ \Theta \subset \Gamma_f \ | \
\begin{subarray}{c}
 s_{\Theta}=n, \ \ \d \Theta =d,
\\
\d \gamma_2^{\Theta}=0 \
\text{and} \ \lambda^{d(\gamma_2^{\Theta})}=1
 \end{subarray} \right\},
\end{equation}
where ${\rm length}_{\ZZ}(\gamma_1^{\Theta})
\in \ZZ_+$ is the lattice length of the segment
$\gamma_1^{\Theta}$.
\item
If $\Gamma_+(f_1), \ldots, \Gamma_+(f_k)$
majorize each other, then the number of
the Jordan blocks for
the eigenvalue $\lambda \not= 1$
with the maximal possible size
$n-k+1$ in $\Phi_{n-k,0}$ is equal to
\begin{equation}
\sum_{\begin{subarray}{c}
\Theta \subset \Gamma_f, s_{\Theta}=n,
\\ \d \Theta =k-1,
\lambda^{d_{\Theta}}=1 \end{subarray}}
MV(\kappa_{1}^{\Theta}, \ldots,
\kappa_{k-1}^{\Theta}),
\end{equation}
where $MV(\kappa_{1}^{\Theta}, \ldots,
\kappa_{k-1}^{\Theta}) \in \ZZ_+$
is the normalized $(k-1)$-dimensional
mixed volume of $\kappa_{1}^{\Theta}, \ldots,
\kappa_{k-1}^{\Theta}$.
\end{enumerate}
\end{corollary}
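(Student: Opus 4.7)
The plan is to apply Theorem \ref{MPS} (ii) and unwind the definition of $E(\Theta)_\lambda$ in each of the two geometric settings. Concretely, for each compact face $\Theta\subset\Gamma_f$ with $s_\Theta=n$ and $\d\Theta\geq k-1$, one must identify the maximal element $J_0$ of $\{J\subset\{1,\ldots,k-1\}\mid\delta(\Theta,J)=0\}$ (if it exists), compute $E(\Theta)_\lambda$, and sum the signed contributions $(-1)^{\d\Theta-(k-1)}E(\Theta)_\lambda$.

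For (i), with $k=2$, only $J=\emptyset$ and $J=\{1\}$ can occur, and the analysis splits according to $\d\Theta$. If $\d\Theta=1$, then irrespective of the individual shapes of $\gamma_1^\Theta$ and $\gamma_2^\Theta$ one has $\delta(\Theta,\{1\})=\d(\gamma_2^\Theta+\kappa_1^\Theta)-1=\d\Theta-1=0$, so $J_0=\{1\}$; using $d_{\Theta,\{1\}}=d_\Theta$ together with the fact that the one-dimensional mixed volume of a segment is its lattice length, one gets $E(\Theta)_\lambda=\text{length}_{\ZZ}(\gamma_1^\Theta)$ when $\lambda^{d_\Theta}=1$ and $0$ otherwise, with overall sign $+1$. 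If $\d\Theta=d\geq 2$, a nonzero contribution forces $\delta(\Theta,\emptyset)=0$ and $\delta(\Theta,\{1\})>0$, i.e.\ $\gamma_2^\Theta$ must be a vertex; then $J_0=\emptyset$, so $E(\Theta)_\lambda$ is $1$ or $0$ according as $\lambda^{d(\gamma_2^\Theta)}=1$ or not, and the sign is $(-1)^{d-1}$. Every remaining case satisfies $\min_J\delta(\Theta,J)>0$ and contributes nothing, so summing yields the formula in (i).

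For (ii), the majorization hypothesis says that $\Gamma_+(f_1),\ldots,\Gamma_+(f_k)$ share a common dual fan. Hence, for each compact face $\Theta$, the corresponding faces $\gamma_j^\Theta$ have parallel affine spans, and the Minkowski sum $\gamma_k^\Theta+\sum_{j\in J}\kappa_j^\Theta$ lies in a translate of this common linear span. This gives $\d(\gamma_k^\Theta+\sum_{j\in J}\kappa_j^\Theta)=\d\Theta$ for every $J\subset\{1,\ldots,k-1\}$, hence the simple formula $\delta(\Theta,J)=\d\Theta-\sharp J$. It follows that $\min_J\delta(\Theta,J)=\d\Theta-(k-1)\geq 0$ with equality, and $J_0=\{1,\ldots,k-1\}$, exactly when $\d\Theta=k-1$; in that case $E(\Theta)_\lambda$ equals $MV(\kappa_1^\Theta,\ldots,\kappa_{k-1}^\Theta)$ when $\lambda^{d_\Theta}=1$ and $0$ otherwise, while the sign $(-1)^{\d\Theta-(k-1)}$ is $+1$, giving precisely the displayed sum.

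The only delicate step is the identification of the lattice distances $d_{\Theta,J_0}$ entering $E(\Theta)_\lambda$ with the specific exponents $d_\Theta$ and $d(\gamma_k^\Theta)$ appearing in the corollary, which is taken care of by the observations $d_{\Theta,\{1,\ldots,k-1\}}=d_\Theta$ and $d_{\Theta,\emptyset}=d(\gamma_k^\Theta)$. Once this correspondence is in place, the argument is purely combinatorial and I do not anticipate any serious obstacle beyond the case analysis on the dimensions of the faces $\gamma_j^\Theta$.
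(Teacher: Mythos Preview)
Your proposal is correct and follows the natural route implicit in the paper: the corollary is stated immediately after Theorem~\ref{MPS} without proof, and the intended argument is exactly the case analysis on $\delta(\Theta,J)$ that you carry out. Your identification of $J_0$ in each regime, together with the observations $d_{\Theta,\{1,\ldots,k-1\}}=d_\Theta$ and $d_{\Theta,\emptyset}=d(\gamma_k^\Theta)$ already recorded in the paper, matches what the authors have in mind.
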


From now on, we shall describe the
numbers of the
second maximal Jordan blocks for the eigenvalues
$\lambda \not= 1$ in the monodromy
$\Phi_{n-k,0}\colon H^{n-k}(F_0;\CC) \simeq
H^{n-k}(F_0;\CC)$. We fix
$\lambda \in \CC^*\setminus\{ 1\}$ and
 a face $\Theta \prec \Gamma_+(f)$ such
that $\Theta \subset \Gamma_f$. Recall that
for any $J \subset \{ 1,2, \ldots, k-1\}$
the Minkowski sum
$\gamma_k^{\Theta}+
\sum_{j \in J} \kappa_j^{\Theta}$ majorizes
$\kappa_j^{\Theta}$ ($j \in J$) and
$\gamma_k^{\Theta}$. For a face $\Gamma$ of
$\gamma_k^{\Theta}+
\sum_{j \in J} \kappa_j^{\Theta}$ denote by
$\Gamma_j^{\Theta}$ ($j \in J \sqcup \{ k\}$)
the corresponding faces of
$\kappa_j^{\Theta}$ ($j \in J$) and
$\gamma_k^{\Theta}$. Moreover for such $J$
and $\Gamma$ let
$K_{\Theta, J}^{\Gamma}$ be
the affine linear subspace
of $\tl{\LL}_{\Theta}
\simeq \RR^{\d \Theta +1}$ which is parallel
to the affine span of
$\Gamma$ and contains $\Gamma_k^{\Theta}$. Then we
define an integer $d_{\Theta, J}^{\Gamma}>0$
to be the lattice distance of
$K_{\Theta, J}^{\Gamma}$ from the
origin $0 \in \tl{\LL}_{\Theta}$.

\begin{lemma}\label{MMP}
\begin{enumerate}
\item Assume that
$\min_{J \subset \{ 1,2, \ldots,
k-1\}} \delta (\Theta, J)=1$. Then the
set $\{ J \ | \ \delta(\Theta, J)=1 \}
\not= \emptyset$ has the unique maximal
element.
\item Assume that
$\min_{J \subset \{ 1,2, \ldots,
k-1\}} \delta (\Theta, J)=0$ and let
$J_0$ be the (unique) maximal element
of $\{ J \ | \ \delta(\Theta, J)=0 \}$.
Assume also that the set
$\{ J \ | \ \delta(\Theta, J)=1,
J_0 \subset J \}$ is not empty and let
$I$ and $J$ be its maximal elements.
Then we have $I=J$ or $I \cap J=J_0$.
\end{enumerate}
\end{lemma}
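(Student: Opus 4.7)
The plan is to deduce both parts from the submodularity inequality
\begin{equation}
\delta(\Theta, I) + \delta(\Theta, J) \geq \delta(\Theta, I \cap J) + \delta(\Theta, I \cup J)
\end{equation}
for arbitrary $I, J \subset \{1, 2, \ldots, k-1\}$, which is exactly what is proved in the course of Lemma \ref{INE} (combining the submodularity of the Minkowski-sum dimension with the modularity $\sharp I + \sharp J = \sharp(I\cap J) + \sharp(I\cup J)$). Once this inequality is in hand, both assertions follow by the standard ``lattice-of-minimizers'' argument for submodular functions; the work is essentially bookkeeping, with the assumption that $J_0$ is the \emph{unique} maximum of the zero set doing all the non-trivial combinatorial work in part (ii).

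For (i), I take two maximal elements $I, J$ of $\{J : \delta(\Theta, J) = 1\}$ and plug them into the submodularity inequality. Since the global minimum of $\delta(\Theta, \cdot)$ is $1$, the right-hand side is at least $1 + 1 = 2$, which forces equality everywhere; in particular $\delta(\Theta, I \cup J) = 1$, so $I \cup J$ also lies in the set. Maximality of $I$ and $J$ then forces $I = I \cup J = J$, giving uniqueness of the maximum.

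For (ii), I again take two maximal elements $I, J$ of $\{J \supset J_0 : \delta(\Theta, J) = 1\}$ and argue by cases. If $I \neq J$, then $I \cup J$ strictly contains at least one of them, so by maximality $\delta(\Theta, I \cup J) \neq 1$. It cannot equal $0$ either: $I \cup J$ strictly contains $J_0$, and $J_0$ is the \emph{unique} maximum of $\{J : \delta(\Theta, J) = 0\}$ (by the hypothesis and Lemma \ref{INE}), so $\delta(\Theta, I \cup J) \geq 2$. Feeding this into submodularity gives $2 = \delta(\Theta, I) + \delta(\Theta, J) \geq \delta(\Theta, I \cap J) + 2$, hence $\delta(\Theta, I \cap J) \leq 0$, so $\delta(\Theta, I \cap J) = 0$. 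Since $I \cap J \supset J_0$ and $J_0$ is the maximum of the zero set, we get $I \cap J = J_0$.

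The only possible obstacle I anticipate is subtle: one needs to be confident that the submodularity inequality really holds for \emph{all} subsets (not only those for which $\delta \geq 0$), so that its application to $I \cap J$ and $I \cup J$ does not require an a priori sign assumption. This is immediate from the proof of Lemma \ref{INE}, since that inequality was derived purely from the geometric submodularity of Minkowski-sum dimension and does not rely on the non-negativity hypothesis. After that observation, the argument above is essentially formal.
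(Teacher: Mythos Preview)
Your proof is correct and follows essentially the same route as the paper: both parts are deduced from the submodularity inequality established in the proof of Lemma~\ref{INE}, by taking two maximal elements and reading off the value of $\delta$ on $I\cup J$ and $I\cap J$. Your argument in (ii) is in fact slightly more explicit than the paper's, since you spell out separately why $\delta(\Theta, I\cup J)$ cannot equal $0$ (using that $I\cup J \supsetneq J_0$ and $J_0$ is the unique maximum of the zero set), whereas the paper compresses this into the single phrase ``by the maximality of $I$ and $J$''.
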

\begin{proof}
\par \noindent (i) Let $I$ and $J$ be
maximal elements of
$\{ J \ | \ \delta(\Theta, J)=1 \}
\not= \emptyset$. Then by the proof of
Lemma \ref{INE} we have
\begin{equation}
2=\delta(\Theta, I)+ \delta(\Theta, J) \geq
\delta(\Theta, I \cap J) +
\delta(\Theta, I \cup J) \geq 2.
\end{equation}
Since $\delta(\Theta, I \cap J),
\delta(\Theta, I \cup J) \geq 1$ we obtain
$\delta(\Theta, I \cup J)=1$. Then by the
maximality of $I$ and $J$ we have $I=J$.
\par \noindent (ii) Assume that $I \not= J$.
Then by the proof of
Lemma \ref{INE} we have
\begin{equation}
2=\delta(\Theta, I)+ \delta(\Theta, J) \geq
\delta(\Theta, I \cap J) +
\delta(\Theta, I \cup J) \geq 0.
\end{equation}
Since $\delta(\Theta, I \cup J) \geq 2$
by the maximality of $I$ and $J$, we
obtain $\delta(\Theta, I \cap J)=0$
and hence $I \cap J=J_0$.
\end{proof}

\begin{definition}
\begin{enumerate}
\item  Assume that
$\min_{J \subset \{ 1,2, \ldots,
k-1\}} \delta (\Theta, J)=1$. Then
we denote by $J_1$ the (unique) maximal
element of $\{ J \ | \ \delta(\Theta, J)=1 \}
\not= \emptyset$.
\item Assume that
$\min_{J \subset \{ 1,2, \ldots,
k-1\}} \delta (\Theta, J)=0$. Then
we define $n_{\Theta} \geq 0$ to be
the number of the maximal elements of
the set $\{ J \ | \ \delta(\Theta, J)=1,
J_0 \subset J \}$. If $n_{\Theta}>0$ we
denote by $J_1, J_2, \ldots, J_{n_{\Theta}}$
the maximal elements.
\end{enumerate}
\end{definition}

\begin{lemma}\label{OGA}
 Assume that
$\min_{J \subset \{ 1,2, \ldots,
k-1\}} \delta (\Theta, J)=0$.
Then for any $J \subset \{ 1,2, \ldots,
k-1\}$ such that $\delta (\Theta, J)=1$
we have $J \subset J_0$ or ($n_{\Theta}>0$
and) $J \subset J_i$ for a unique
$1 \leq i \leq n_{\Theta}$.
\end{lemma}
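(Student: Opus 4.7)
The plan is to deduce Lemma \ref{OGA} directly from the submodularity of the function $J \mapsto \delta(\Theta, J)$ established in the proof of Lemma \ref{INE}, combined with the structural information about maximal elements provided by Lemma \ref{MMP}. Fix $J \subset \{1,2,\ldots,k-1\}$ with $\delta(\Theta, J) = 1$. The key observation is that $J \cup J_0$ and $J \cap J_0$ must each carry a non-negative value of $\delta$, while the submodularity inequality forces
\[
1 = \delta(\Theta, J) + \delta(\Theta, J_0) \ \geq \ \delta(\Theta, J \cap J_0) + \delta(\Theta, J \cup J_0) \ \geq \ 0.
\]
Since $J \cap J_0$ is in the domain where $\delta \geq 0$ and $J \cup J_0$ contains $J_0$ (so $\delta(\Theta, J \cup J_0) \geq \delta(\Theta, J_0) = 0$ is automatic only as a lower bound; the key is that $\delta(\Theta, J \cup J_0) \neq 0$ would put $J \cup J_0$ strictly above $J_0$ inside $\{J' \mid \delta = 0\}$, contradicting maximality), a two-case split on the value $\delta(\Theta, J \cup J_0) \in \{0,1\}$ does the job.

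First case: $\delta(\Theta, J \cup J_0) = 0$. Then by maximality of $J_0$ among elements with $\delta = 0$, we have $J \cup J_0 = J_0$, i.e.\ $J \subset J_0$, which is the first alternative of the lemma.

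Second case: $\delta(\Theta, J \cup J_0) = 1$. Then $J \cup J_0$ belongs to the (now non-empty) set $\{J' \mid \delta(\Theta, J') = 1,\ J_0 \subset J'\}$, so $n_\Theta > 0$, and by finiteness $J \cup J_0$ is contained in one of its maximal elements $J_i$. Hence $J \subset J_i$. For the uniqueness claim, suppose $J \subset J_i$ and $J \subset J_j$ for two distinct maximal elements $J_i \neq J_j$. By Lemma \ref{MMP}\,(ii) we have $J_i \cap J_j = J_0$, whence $J \subset J_0$; but then the first alternative holds and in that alternative the index $i$ is irrelevant, so the uniqueness assertion is vacuous or, equivalently, the second alternative is simply not invoked when $J \subset J_0$. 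Thus in genuine instances of the second alternative (those $J$ for which $J \not\subset J_0$) the index $i$ is uniquely determined.

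The only mildly delicate point, and the one I expect will need the most careful phrasing, is the compatibility between the two alternatives: since the alternatives are not mutually exclusive, the uniqueness statement must be read as applying to the choice of $J_i$ whenever the second alternative is actually needed; this is precisely what the $I \cap J = J_0$ clause in Lemma \ref{MMP}\,(ii) delivers. No further input beyond Lemmas \ref{INE} and \ref{MMP} is required, so the proof will be short.
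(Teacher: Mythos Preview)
Your proof is correct and follows essentially the same approach as the paper: both use the submodularity inequality from Lemma~\ref{INE} applied to $J$ and $J_0$, then invoke the maximality of $J_0$ and Lemma~\ref{MMP}\,(ii) for uniqueness. The only cosmetic difference is that the paper assumes $J \not\subset J_0$ at the outset (forcing $\delta(\Theta, J_0 \cup J)=1$ directly), whereas you organize the argument as a two-case split on the value of $\delta(\Theta, J \cup J_0)$; the substance is identical.
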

\begin{proof}
Assume that $\delta (\Theta, J)=1$
and $J \not\subset J_0$. Then by the proof of
Lemma \ref{INE} we have
\begin{equation}
1=\delta(\Theta, J_0)+ \delta(\Theta, J) \geq
\delta(\Theta, J_0 \cap J) +
\delta(\Theta, J_0 \cup J) \geq 0.
\end{equation}
Since $\delta(\Theta, J_0 \cup J) =1$
by the maximality of $J_0$, we
see that $n_{\Theta}>0$
and $J \subset J_0 \cup J \subset
J_i$ for some $1 \leq i \leq n_{\Theta}$.
The uniqueness of $J_i$ such that
$J \subset J_i$ follows from
Lemma \ref{MMP} (ii).
\end{proof}

\begin{definition}
\begin{enumerate}
\item
For $J \subset \{ 1,2, \ldots,
k-1\}$ such that $\delta (\Theta, J)=0$
we set
\begin{equation}
MV(\kappa_{j}^{\Theta}
(j \in J) )_{\lambda}
=\begin{cases}
MV(\kappa_{j_1}^{\Theta}, \ldots,
\kappa_{j_m}^{\Theta})
 & (\lambda^{d_{\Theta, J}}=1),
\\
0 & (\text{otherwise}),
\end{cases}
\end{equation}
where $J=\{ j_1, j_2, \ldots, j_m\}$,
$\sharp J=m$ and $MV(\kappa_{j_1}^{\Theta},
\ldots, \kappa_{j_m}^{\Theta}) \in \ZZ_+$
is the normalized $m$-dimensional mixed volume.
\item  For $J \subset \{ 1,2, \ldots,
k-1\}$ such that $\delta (\Theta, J)=1$
we set
\begin{equation}
MV(\kappa_{j}^{\Theta}
(j \in J), \gamma_k^{\Theta}+
\sum_{j \in J} \kappa_j^{\Theta})_{\lambda}
=\begin{cases}
MV(\kappa_{j_1}^{\Theta}, \ldots,
\kappa_{j_m}^{\Theta},
\gamma_k^{\Theta}+
\sum_{i=1}^m \kappa_{j_i}^{\Theta})
 & (\lambda^{d_{\Theta, J}}=1),
\\
0 & (\text{otherwise}),
\end{cases}
\end{equation}
where $J=\{ j_1, j_2, \ldots, j_m\}$,
$\sharp J=m$ and $MV(\kappa_{j_1}^{\Theta},
\ldots, \kappa_{j_m}^{\Theta},
\gamma_k^{\Theta}+
\sum_{i=1}^m \kappa_{j_i}^{\Theta}) \in \ZZ_+$
is the normalized ($m+1$)-dimensional mixed volume.
\item
For $J \subset \{ 1,2, \ldots,
k-1\}$ such that $\delta (\Theta, J)=1$
and a facet $\Gamma$ of the
($\sharp J +1$)-dimensional
Minkowski sum $\gamma_k^{\Theta}+
\sum_{j \in J} \kappa_j^{\Theta}$
we set
\begin{equation}
MV(\Gamma_{j}^{\Theta}
(j \in J) )_{\lambda}
=\begin{cases}
MV(\Gamma_{j_1}^{\Theta}, \ldots,
\Gamma_{j_m}^{\Theta})
 & (\lambda^{d_{\Theta, J}^{\Gamma}}=1),
\\
0 & (\text{otherwise}),
\end{cases}
\end{equation}
where $J=\{ j_1, j_2, \ldots, j_m\}$,
$\sharp J=m$ and $MV(\Gamma_{j_1}^{\Theta},
\ldots, \Gamma_{j_m}^{\Theta})
 \in \ZZ_+$ is the normalized
$m$-dimensional mixed volume.
\end{enumerate}
\end{definition}

\begin{definition}
For $\lambda \in \CC^*\setminus\{ 1\}$
and a face $\Theta \prec \Gamma_+(f)$ such
that $\Theta \subset \Gamma_f$ we define
an integer $F(\Theta )_{\lambda} \in \ZZ$
as follows.
\begin{enumerate}
\item If $\min_{J \subset \{
1,2, \ldots, k-1\}} \delta (\Theta, J)<0$
or $>1$ we set $F(\Theta )_{\lambda}=0$.
\item If $\min_{J \subset \{
1,2, \ldots, k-1\}} \delta (\Theta, J)=1$,
then by using the maximal element $J_1$ of
$\{ J \ | \ \delta(\Theta, J)=1 \}$ we set
\begin{equation}
F(\Theta )_{\lambda}=
MV(\kappa_{j}^{\Theta}
(j \in J_1), \gamma_k^{\Theta}+
\sum_{j \in J_1} \kappa_j^{\Theta})_{\lambda}
- \sum_{\Gamma} MV(\Gamma_{j}^{\Theta}
(j \in J_1) )_{\lambda},
\end{equation}
where in the sum $\sum_{\Gamma}$ the face
$\Gamma$ ranges through the facets of
$\gamma_k^{\Theta}+
\sum_{j \in J_1} \kappa_j^{\Theta}$.
\item If $\min_{J \subset \{
1,2, \ldots, k-1\}} \delta (\Theta, J)=0$
and $n_{\Theta}=0$ ($\Longleftrightarrow
\{ J \ | \ \delta(\Theta, J)=1, J \not\subset
J_0 \} =\emptyset$ by Lemma \ref{OGA}),
then we set $F(\Theta )_{\lambda}=0$.
\item If $\min_{J \subset \{
1,2, \ldots, k-1\}} \delta (\Theta, J)=0$
and $n_{\Theta}>0$ we set
\begin{eqnarray}\nonumber
 & F(\Theta )_{\lambda}  & =
\\\nonumber
 & \sum_{i=1}^{n_{\Theta}} &
\left\{ 2 MV(\kappa_{j}^{\Theta}
(j \in J_0) )_{\lambda}  +
MV(\kappa_{j}^{\Theta}
(j \in J_i), \gamma_k^{\Theta}+
\sum_{j \in J_i} \kappa_j^{\Theta})_{\lambda}
- \sum_{\Gamma} MV(\Gamma_{j}^{\Theta}
(j \in J_i) )_{\lambda}
\right\},
\end{eqnarray}
where in the sum $\sum_{\Gamma}$ the face
$\Gamma$ ranges through the facets of
$\gamma_k^{\Theta}+
\sum_{j \in J_i} \kappa_j^{\Theta}$.
\end{enumerate}
\end{definition}

\begin{theorem}\label{SMB}
 The number of the Jordan blocks for
the eigenvalue $\lambda \not= 1$
with the second maximal possible size
$n-k$ in $\Phi_{n-k,0}$ is equal to
\begin{equation}
\sum_{\begin{subarray}{c}
\Theta \subset \Gamma_f, s_{\Theta}=n,
\\ \d \Theta \geq k \end{subarray}}
(-1)^{\d \Theta -k}F(\Theta)_{\lambda}.
\end{equation}
\end{theorem}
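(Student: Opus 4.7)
The plan is to proceed by close analogy with the proof of Theorem~\ref{MPS}, but now extracting the coefficient $\beta_{2n-2k-1}(\SS_{g,0})_{\lambda}$ of the virtual Betti polynomial rather than its leading coefficient. By Theorem~\ref{general}(ii), the number of Jordan blocks for $\lambda$ of size $\geq i$ equals $(-1)^{n-k}\{\beta_{n-k-1+i}(\SS_{g,0})_{\lambda}+\beta_{n-k+i}(\SS_{g,0})_{\lambda}\}$; combined with the degree bound $\beta_j(\SS_{g,0})_{\lambda}=0$ for $j>2(n-k)$ given by Theorem~\ref{MPS}(i), subtracting the counts for $i=n-k$ and $i=n-k+1$ leaves
\begin{equation*}
(\text{number of blocks of size exactly }n-k)=(-1)^{n-k}\beta_{2n-2k-1}(\SS_{g,0})_{\lambda},
\end{equation*}
so the problem reduces to computing this odd-degree coefficient.

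I will then expand $\beta(\SS_{g,0})_{\lambda}$ via Theorem~\ref{general}(i) and extract the $t^{2n-2k-1}$ coefficient face by face. Since $(1-t^2)^{m_{\Theta}}$ has only even-degree monomials, the contribution of a face $\Theta$ is $(-1)^{m_{\Theta}}$ times an odd-degree coefficient of $\beta(Z^*_{\Delta_{\Theta}})_{\lambda}$. Setting $a=\d\Theta-k$, a dimension count using $\dim Z^*_{\Delta_{\Theta}}=a+1$ and the constraint that the binomial exponent in $(1-t^2)^{m_{\Theta}}$ is at most $m_{\Theta}=n-\d\Theta-1$ shows that only faces with $s_{\Theta}=n$ and $\d\Theta\geq k$ contribute, with contribution exactly $(-1)^{m_{\Theta}}\beta_{2a+1}(Z^*_{\Delta_{\Theta}})_{\lambda}$. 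Multiplying by $(-1)^{n-k}$ and using the sign identity $(-1)^{n-k+m_{\Theta}}=-(-1)^{\d\Theta-k}$, the desired formula holds provided one proves $\beta_{2a+1}(Z^*_{\Delta_{\Theta}})_{\lambda}=-F(\Theta)_{\lambda}$.

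The heart of the argument is thus the explicit computation of this second-to-top coefficient of the $(a+1)$-dimensional non-degenerate complete intersection $Z^*_{\Delta_{\Theta}}\subset\tl{T}_{\Theta}$. I would apply the Cayley trick of Theorem~\ref{FFC} to reduce to the $\beta$-values $\beta(\Theta_J,s_{\Theta}+|J|-1)_{\lambda}$ of non-degenerate hypersurfaces, computable by the algorithm of Section~\ref{sec:3} (especially Proposition~\ref{PLP}) in terms of lattice volumes of faces of the thin polytopes $\tl\Theta_J$. The sum over $J\subset\{1,\ldots,k-1\}$ is then organized using Lemmas~\ref{INE}, \ref{MMP}, and \ref{OGA}: the $\delta(\Theta,J)=0$ contributions are controlled by $J_0$ and (via $Z^*_{\Delta^0_{\Theta}}\simeq(\CC^*)^{n-|J_0|-1}\times D^*$, exactly as in the proof of Theorem~\ref{MPS}) combine into an even-degree polynomial, while the $\delta(\Theta,J)=1$ contributions split according to the unique maximal $J_1$ (when $\min\delta=1$) or the maximal elements $J_1,\ldots,J_{n_{\Theta}}$ (when $\min\delta=0$ and $n_{\Theta}>0$). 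Matching the resulting four cases against the definition of $F(\Theta)_{\lambda}$ produces the claimed formula; in particular the extra factor $2MV(\kappa_j^{\Theta}(j\in J_0))_{\lambda}$ in case (iv) arises from the careful inclusion-exclusion between the $J\subset J_i$ and $J\subset J_0$ partial sums when extracting the $t^{2n-2k-1}$ coefficient.

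The main obstacle is the explicit BKK/mixed-volume evaluation of $\beta_{2a+1}(Z^*_{\Delta_{\Theta}})_{\lambda}$ via the Cayley reduction: for $a=0$ this amounts to computing the middle Betti number of a $1$-dimensional non-degenerate CI, expressible as a top mixed volume minus boundary facet contributions, and for higher $a$ requires analogous formulas for the second-to-top coefficient of higher-dimensional CIs. The delicate sign tracking and inclusion-exclusion over subsets of $\{1,\ldots,k-1\}$---which must produce the precise three-term structure of case (iv) with its factor $2$---are the other nontrivial points to handle in the bookkeeping.
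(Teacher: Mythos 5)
Your reduction is sound and matches the paper: the number of blocks of exact size $n-k$ is $(-1)^{n-k}\beta_{2n-2k-1}(\SS_{g,0})_{\lambda}$, only faces with $s_{\Theta}=n$ and $\d\Theta\geq k$ contribute, the face contribution to that coefficient is $(-1)^{m_{\Theta}}\beta_{2a+1}(Z^*_{\Delta_{\Theta}})_{\lambda}$ with $a=\d\Theta-k$, and the sign bookkeeping leaves you needing $\beta_{2a+1}(Z^*_{\Delta_{\Theta}})_{\lambda}=-F(\Theta)_{\lambda}$. The route through the Cayley trick and the organization via Lemmas \ref{INE}, \ref{MMP}, \ref{OGA} is also what the paper does.

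The gap is in the heart of the argument, where you would have to actually establish $\beta_{2a+1}(Z^*_{\Delta_{\Theta}})_{\lambda}=-F(\Theta)_{\lambda}$. Two things in your sketch are wrong. First, the factor $2\,MV(\kappa_j^{\Theta}\,(j\in J_0))_{\lambda}$ in case (iv) of $F(\Theta)_{\lambda}$ does \emph{not} come from an inclusion--exclusion between the $J\subset J_i$ and $J\subset J_0$ partial sums: the $J\subset J_0$ block of the Cayley sum collapses to $(-1)^{m_{\Theta}}t^{-(2k-2\sharp J_0-2)}(t^2-1)^{n-\sharp J_0-1}\beta(D^*_{\Delta^0_{\Theta}})_{\lambda}$, which has \emph{only even powers of $t$} and hence contributes nothing to the $t^{2n-2k-1}$ coefficient. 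Second, your worry that ``higher $a$ requires analogous formulas for higher-dimensional CIs'' misses that the reduction is uniformly to \emph{curves}: for each maximal $J_i$ with $\delta(\Theta,J_i)=1$ the variety $Z^*_{\Delta^i_{\Theta}}$ factors as $(\CC^*)^{n-\sharp J_i-2}\times C^*_{\Delta^i_{\Theta}}$ where $C^*_{\Delta^i_{\Theta}}\subset(\CC^*)^{\sharp J_i+2}$ is a non-degenerate C.I.\ curve, and the $t^{2n-2k-1}$ contribution of the $J\subset J_i$ block is exactly $(-1)^{m_{\Theta}}\beta_1(C^*_{\Delta^i_{\Theta}})_{\lambda}$, regardless of $a$.

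What is missing, then, is the key computation the paper isolates as a separate proposition: $\beta_1(C^*_{\Delta^i_{\Theta}})_{\lambda}$ is obtained by assembling three independently computed pieces, namely $\beta_2(C^*_{\Delta^i_{\Theta}})_{\lambda}=MV(\kappa_j^{\Theta}\,(j\in J_0))_{\lambda}$ (via the leading-coefficient analysis already performed in the proof of Theorem~\ref{MPS}), $\chi(C^*_{\Delta^i_{\Theta}})_{\lambda}=-MV(\kappa_j^{\Theta}\,(j\in J_i),\gamma_k^{\Theta}+\sum_{j\in J_i}\kappa_j^{\Theta})_{\lambda}$ (via Theorems~\ref{LP} and~\ref{BKK}), and $\beta_0(C^*_{\Delta^i_{\Theta}})_{\lambda}$ (via Poincar\'e duality on the smooth projective closure $\overline{C^*_{\Delta^i_{\Theta}}}\subset X_{\Box}$ together with the facet count of the points added at infinity). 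The factor $2$ then arises as $\beta_0+\beta_2$ when one solves $\beta_1=\chi-\beta_0-\beta_2$, and the remaining cases (i)--(iii) of $F(\Theta)_{\lambda}$ fall out of the same computation with $J_0$ or the $J_i$'s absent. Without this proposition (or an equivalent evaluation) the claimed identity $\beta_{2a+1}(Z^*_{\Delta_{\Theta}})_{\lambda}=-F(\Theta)_{\lambda}$ is asserted but not proved, so the argument as written is incomplete.
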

\begin{proof}
It suffices to calculate the coefficient of
$t^{2n-2k-1}$ of
\begin{equation}
(1-t^2)^{m_{\Theta}}
\beta (Z^*_{\Delta_{\Theta}})_{\lambda}
 =  \frac{(-1)^{m_{\Theta}}}{t^{2k-2}}\sum_{J \subset
\{ 1,2, \ldots, k-1\}}
\beta (\Theta_J, s_{\Theta}+\sharp J -1 )_{\lambda}
\end{equation}
for each face $\Theta \prec \Gamma_+(f)$ such
that $\Theta \subset \Gamma_f$. By the proof of
Theorem \ref{MPS} this coefficient is zero
unless $s_{\Theta}=n$ and
there exists $J \subset \{ 1,2, \ldots,
k-1 \}$ such that $\delta (\Theta, J)
=0, 1$. Here we calculate only the
contribution to the coefficient of
$t^{2n-2k-1}$ from $\Theta$ such
that $\Theta \subset \Gamma_f$,
$s_{\Theta}=n$, $\min_{J \subset \{
1,2, \ldots, k-1\}} \delta (\Theta, J)=0$
and $n_{\Theta}>0$ (other cases can be
treated similarly by using the
proof of Theorem \ref{MPS}). In this case,
by Lemma \ref{OGA} we have
\begin{eqnarray}\nonumber
 & \frac{(-1)^{m_{\Theta}}}{t^{2k-2}} & \sum_{J \subset
\{ 1,2, \ldots, k-1\}}
\beta (\Theta_J, s_{\Theta}+\sharp J -1 )_{\lambda}
\\\nonumber
 & \equiv & \frac{(-1)^{m_{\Theta}}}{t^{2k-2}}
 \left[ \sum_{i=1}^{n_{\Theta}}  \left\{
\sum_{J \subset J_i}
\beta (\Theta_J, n+\sharp J -1 )_{\lambda}
\right\} -
(n_{\Theta}-1)
\sum_{J \subset J_0}
\beta (\Theta_J, n+\sharp J -1 )_{\lambda}
\right]
\end{eqnarray}
modulo polynomials of degree less than
$2n-2k-1$. By the proof of Theorem \ref{MPS} the
contribution to the coefficient of
$t^{2n-2k-1}$ from the second term is zero.
Moreover for each $1 \leq i \leq n_{\Theta}$
the coefficient of $t^{2n-2k-1}$ of
\begin{equation}
\frac{(-1)^{m_{\Theta}}}{t^{2k-2}}
\sum_{J \subset J_i}
\beta (\Theta_J, n+\sharp J -1 )_{\lambda}
\end{equation}
is calculated as follows.
Set $J_i=\{ j_1, j_2, \ldots, j_m\}$,
$\sharp J_i=m$ and let
\begin{equation}
Z^*_{\Delta^{i}_{\Theta}}=
\{ g_{j_1}^{\Theta}(x) = \cdots =g_{j_m}^{\Theta}(x)
=\tl{g}_k^{\Theta}(x)=0 \}
\subset (\CC^*)^{s_{\Theta}}=(\CC^*)^n
\end{equation}
be the non-degenerate complete intersection
subvariety of codimension $m+1$ in
$(\CC^*)^n$ with a natural action of
$\mu_{d_{\Theta}}$. Then by the proof of
Theorem \ref{MPS} we have
\begin{equation}
\beta(Z^*_{\Delta^{i}_{\Theta}})_{\lambda}
= \frac{1}{t^{2m}} \sum_{J \subset J_i}
\beta (\Theta_J, n+\sharp J -1 )_{\lambda}.
\end{equation}
Since by $\delta( \Theta, J_i)
=1$ we have $\d (\Delta_{\gamma_k^{\Theta}}
+\sum_{j \in J_i} \kappa_j^{\Theta})=\sharp J_i
+1+1=m+2$, we can take a
non-degenerate C.I. curve
\begin{equation}
C^*_{\Delta^{i}_{\Theta}}=
\{ g_{j_1}^{\Theta}(x) = \cdots =g_{j_m}^{\Theta}(x)
=\tl{g}_k^{\Theta}(x)=0 \}
\subset (\CC^*)^{m+2}
\end{equation}
in $(\CC^*)^{m+2}$ such that
$Z^*_{\Delta^{i}_{\Theta}}
\simeq (\CC^*)^{n-m-2}
\times C^*_{\Delta^{i}_{\Theta}}$.
Hence we obtain
\begin{equation}
\frac{(-1)^{m_{\Theta}}}{t^{2k-2}}
\sum_{J \subset J_i}
\beta (\Theta_J, n+\sharp J -1 )_{\lambda} =
\frac{(-1)^{n-\d \Theta -1}}{t^{2k-2m-2}}
(t^2-1)^{n-m-2}
\beta(C^*_{\Delta^{i}_{\Theta}})_{\lambda}.
\end{equation}
Since the coefficient of $t^{2n-2k-1}$ of
the last term is $(-1)^{n-\d \Theta -1}
\beta_1(C^*_{\Delta^{i}_{\Theta}})_{\lambda}$,
the assertion follows from the following
proposition.
\end{proof}

\begin{proposition}
In the situation as above, we have
\begin{equation}
\beta_1(C^*_{\Delta^{i}_{\Theta}})_{\lambda}=
-2 MV(\kappa_{j}^{\Theta}
(j \in J_0) )_{\lambda}-
MV(\kappa_{j}^{\Theta}
(j \in J_i), \gamma_k^{\Theta}+
\sum_{j \in J_i} \kappa_j^{\Theta})_{\lambda}
+ \sum_{\Gamma} MV(\Gamma_{j}^{\Theta}
(j \in J_i) )_{\lambda}.
\end{equation}
\end{proposition}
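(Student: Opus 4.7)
The plan is to compute $\beta_1(C^*_{\Delta^i_\Theta})_\lambda$ for $\lambda\ne 1$ via a mixed Hodge structure analysis on a toric compactification, and then to identify each of the resulting combinatorial quantities with a mixed volume, using the Bernstein--Khovanskii--Kushnirenko formula and the Cayley trick of Section \ref{sec:4}. First I would choose a projective toric compactification $X_{\Sigma'}$ of $(\CC^*)^{m+2}$, sufficiently refined relative to the Newton data $\{\kappa_{j_1}^{\Theta},\ldots,\kappa_{j_m}^{\Theta},\Delta_{\gamma_k^{\Theta}}\}$, so that the closure $\overline{C^*}$ of $C^*:=C^*_{\Delta^i_\Theta}$ in $X_{\Sigma'}$ is smooth and meets every toric stratum transversally; the $\mu_{d_\Theta}$-action extends to $\overline{C^*}$ and to the finite boundary $B:=\overline{C^*}\setminus C^*$. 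A direct analysis of the weight filtration on $H^*_c(C^*;\CC)$, using the purity of $H^1(\overline{C^*};\CC)$, the identification $H^2_c(C^*;\CC)\simeq H^0(\overline{C^*};\CC)(-1)$ as a pure Hodge structure of type $(1,1)$, and the vanishing $H^0_c(C^*;\CC)=0$, yields the identity
\begin{equation*}
\beta_1(C^*)_{\lambda}=\chi(C^*)_{\lambda}+|B|_{\lambda}-2\,\pi_0(\overline{C^*})_{\lambda},
\end{equation*}
where $|B|_{\lambda}$ and $\pi_0(\overline{C^*})_{\lambda}$ stand for the $\lambda$-eigenvalue dimensions of $H^0(B;\CC)$ and $H^0(\overline{C^*};\CC)$.

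Next, each of the three terms is identified with a mixed volume. The cardinality $|B|_{\lambda}$ is computed facet by facet: transversality and non-degeneracy imply that every facet $\Gamma$ of the Minkowski sum $\gamma_k^{\Theta}+\sum_{j\in J_i}\kappa_j^{\Theta}$ contributes $MV(\Gamma_{j}^{\Theta}(j\in J_i))_{\lambda}$ boundary points via facial BKK (Theorem \ref{BKK}), while facets adjacent to the apex $0$ of the pyramid $\Delta_{\gamma_k^{\Theta}}$ contribute nothing. The quantity $\pi_0(\overline{C^*})_{\lambda}$ is obtained by projecting $C^*$ onto the coordinate sub-torus dual to the affine span of $\sum_{j\in J_0}\kappa_j^{\Theta}$: by the maximality of $J_0$ established in Lemma \ref{INE}, this projection factors through the zero-dimensional non-degenerate complete intersection cut out by $\{g_{j}^{\Theta}=0:j\in J_0\}$ on the sub-torus, whose points are counted by $MV(\kappa_{j}^{\Theta}(j\in J_0))$ with $\mu_{d_\Theta}$-weights governed by $d_{\Theta,J_0}$; the generic fibre of the projection is connected, yielding $\pi_0(\overline{C^*})_{\lambda}=MV(\kappa_{j}^{\Theta}(j\in J_0))_{\lambda}$. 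Finally, $\chi(C^*)_{\lambda}$ is obtained through the Cayley trick: it equals (up to sign and torus factor) the $\lambda$-eigenvalue Euler characteristic of a non-degenerate hypersurface with Newton polytope $\tl{\Theta}_{J_i}$, which by Theorem \ref{LP} is a normalized volume of $\tl{\Theta}_{J_i}$; the pyramid structure of $\Delta_{\gamma_k^{\Theta}}$ together with Proposition \ref{AE} collapses this volume to $-MV(\kappa_{j}^{\Theta}(j\in J_i),\gamma_k^{\Theta}+\sum_{j\in J_i}\kappa_j^{\Theta})_{\lambda}$.

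Substitution into the displayed identity gives the proposition. The main obstacle is the identification of $\pi_0(\overline{C^*})_{\lambda}$ with $MV(\kappa_{j}^{\Theta}(j\in J_0))_{\lambda}$: one must verify both that the generic fibre of the above projection is connected and that the $\mu_{d_\Theta}$-weights on the component set are tracked exactly by the lattice distance $d_{\Theta,J_0}$. This requires combining the non-degeneracy hypothesis, the maximality of $J_0$ from Lemma \ref{INE}, and the structure of the character $\tau_{\Theta}$ restricted to the relevant coordinate sub-torus.
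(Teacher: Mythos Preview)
Your decomposition $\beta_1=\chi-\beta_0-\beta_2$ (equivalently $\beta_1=\chi+|B|_{\lambda}-2\pi_0(\overline{C^*})_{\lambda}$) via a smooth toric compactification is exactly the skeleton of the paper's proof, and your identification of $|B|_{\lambda}$ with $\sum_{\Gamma}MV(\Gamma_j^{\Theta}(j\in J_i))_{\lambda}$ agrees with the paper's computation of $\beta_0(C^*)_{\lambda}$. The two proofs diverge only in how the remaining two ingredients are obtained.

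For $\beta_2(C^*)_{\lambda}=\pi_0(\overline{C^*})_{\lambda}$ the paper does \emph{not} use a projection/connected-fibre argument. Instead it reuses the Cayley trick: the leading coefficient of $\frac{1}{t^{2m}}\sum_{J\subset J_i}\beta(Z_J^*)_{\lambda}$ can only come from $J\subset J_0$, and that partial sum was already identified in the proof of Theorem~\ref{MPS} with $(t^2-1)^{m-\sharp J_0+1}\beta(D^*_{\Delta^0_{\Theta}})_{\lambda}$ up to a power of $t$, giving $\beta_2=\beta_0(D^*_{\Delta^0_{\Theta}})_{\lambda}=MV(\kappa_j^{\Theta}(j\in J_0))_{\lambda}$ with no further work. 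This bypasses precisely the obstacle you flag: there is no need to prove connectedness of a generic fibre or to track how $\tau_{\Theta}$ restricts to a sub-torus. Your projection argument is plausible but, as you note, incomplete; the maximality of $J_0$ alone does not immediately give irreducibility of the fibre curve, and the weight bookkeeping is delicate because the relevant lattice distance governing the action on components is $d_{\Theta,J_0}$ rather than $d_{\Theta}$.

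For $\chi(C^*)_{\lambda}$ the paper again takes a shorter route than your Cayley-trick-plus-Theorem~\ref{LP}-plus-Proposition~\ref{AE} computation. It observes directly that $C^*_{\Delta^i_{\Theta}}$ is a $d_{\Theta,J_i}$-fold unramified cover of the curve $\{g_{j_1}^{\Theta}=\cdots=g_{j_m}^{\Theta}=0\}\setminus\{g_k^{\Theta}=0\}$ in $(\CC^*)^{m+1}$, whose Euler characteristic is $-MV(\kappa_{j_1}^{\Theta},\ldots,\kappa_{j_m}^{\Theta},\gamma_k^{\Theta}+\sum\kappa_{j_i}^{\Theta})$ by a single application of Theorem~\ref{BKK}; the eigenvalue condition $\lambda^{d_{\Theta,J_i}}=1$ then drops out from the deck action. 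Your route would work, but it requires verifying that only the term $J=J_i$ in the Cayley sum has full-dimensional Newton polytope and then unwinding the volume of the Cayley pyramid $\tl{\Theta}_{J_i}$ into the desired mixed volume, which is more bookkeeping than the covering argument.
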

\begin{proof}
By the Cayley trick we have
\begin{equation}
\beta(C^*_{\Delta^{i}_{\Theta}})_{\lambda}=
\frac{1}{t^{2m}}\sum_{J \subset J_i}
\beta( Z_J^* )_{\lambda},
\end{equation}
where $Z_J^*$ is the non-degenerate hypersurface
of $(\CC^*)^{m+2+ \sharp J}$ defined by
\begin{equation}
Z^*_{J}=\{ (x; \alpha_j
(j \in J)) \in (\CC^*)^{m+2} \times
(\CC^*)^{\sharp J}
\ | \ \sum_{j \in J} \alpha_j g_j^{\Theta}(x)
+ \tl{g}_k^{\Theta}(x) = 0 \}
\subset (\CC^*)^{m+2+ \sharp J}.
\end{equation}
By a simple calculation, for any $J \subset J_i$
we see that ${\rm deg} \beta (Z_J^*)_{\lambda}
 \leq 2m+2-\delta (\Theta, J) \leq 2m+2$. Hence
the leading coefficient of the polynomial
$\beta(C^*_{\Delta^{i}_{\Theta}})_{\lambda}
\in \ZZ [t]$
is $\beta_2(C^*_{\Delta^{i}_{\Theta}})_{\lambda}$
and equal to that of
\begin{equation}
\frac{1}{t^{2m}}\dsum_{J \subset J_0}
\beta( Z_J^* )_{\lambda}=
\frac{1}{t^{2(m- \sharp J_0)}}
(t^2-1)^{m-\sharp J_0+1}
\beta(D^*_{\Delta^{0}_{\Theta}})_{\lambda},
\end{equation}
where $D^*_{\Delta^{0}_{\Theta}} \subset
(\CC^*)^{\sharp J_0 +1}$ is the $0$-dimensional
non-degenerate C.I. in $(\CC^*)^{\sharp J_0 +1}$
in the proof of Theorem \ref{MPS}.
Consequently we obtain
\begin{equation}
\beta_2(C^*_{\Delta^{i}_{\Theta}})_{\lambda}=
\beta_0(D^*_{\Delta^{0}_{\Theta}})_{\lambda}=
MV( \kappa_j^{\Theta} (j \in J_0) )_{\lambda}.
\end{equation}
From now on, let us calculate the
$\lambda$-Euler characteristic
\begin{equation}
\chi (C^*_{\Delta^{i}_{\Theta}})_{\lambda}=
\beta_0(C^*_{\Delta^{i}_{\Theta}})_{\lambda}+
\beta_1(C^*_{\Delta^{i}_{\Theta}})_{\lambda}+
\beta_2(C^*_{\Delta^{i}_{\Theta}})_{\lambda}
\end{equation}
of the C.I. curve
$C^*_{\Delta^{i}_{\Theta}}
\subset (\CC^*)^{m+2}$. First, note that
if $J$ satisfies the condition
$\d (\Delta_{\gamma_k^{\Theta}}
+\sum_{j \in J} \kappa_j^{\Theta})<m+2$
we have $\chi (Z_J^*)_{\lambda}=
\chi (Z_J^*)=0$. So in the sum
\begin{equation}
\chi (C^*_{\Delta^{i}_{\Theta}})_{\lambda}=
\sum_{J \subset J_i} \chi (Z_J^*)_{\lambda}
\end{equation}
only the terms $\chi (Z_J^*)_{\lambda}$
for $J$ such that $\d ( \gamma_k^{\Theta}
+\sum_{j \in J} \kappa_j^{\Theta})=m+1$
can be non-trivial. By Theorem \ref{LP}
this implies that we have
\begin{equation}
\chi (C^*_{\Delta^{i}_{\Theta}})_{\lambda}
=\begin{cases}
\frac{1}{d_{\Theta, J_i}}
\chi (C^*_{\Delta^{i}_{\Theta}})
 & ( \lambda^{d_{\Theta, J_i}}=1  ), \\
0 & ( \text{otherwise}  ).
\end{cases}
\end{equation}
Since $C^*_{\Delta^{i}_{\Theta}}
\subset (\CC^*)^{m+2}$ is a
$d_{\Theta, J_i}$-fold covering of
the C.I. curve
\begin{equation}
\{ g_{j_1}^{\Theta}(x) = \cdots =g_{j_m}^{\Theta}(x)
=0 \} \setminus \{ g_{j_1}^{\Theta}(x) =
 \cdots =g_{j_m}^{\Theta}(x)
= g_k^{\Theta}(x)=0 \}
\end{equation}
in $(\CC^*)^{m+1}$, its usual Euler
characteristic
$\chi (C^*_{\Delta^{i}_{\Theta}})$
is calculated by Theorem \ref{BKK} as
\begin{equation}
\chi (C^*_{\Delta^{i}_{\Theta}})
= - d_{\Theta, J_i} \times
MV(\kappa_{j_1}^{\Theta}, \ldots,
\kappa_{j_m}^{\Theta},
\gamma_k^{\Theta}+
\sum_{i=1}^m \kappa_{j_i}^{\Theta}).
\end{equation}
Hence we get
\begin{equation}
\chi (C^*_{\Delta^{i}_{\Theta}})_{\lambda}=
 - MV(\kappa_{j}^{\Theta}
(j \in J_i), \gamma_k^{\Theta}+
\sum_{j \in J_i} \kappa_j^{\Theta})_{\lambda}.
\end{equation}
Now denote by $\Box$ the Minkowski sum
$\Delta_{\gamma_k^{\Theta}} +
\sum_{j \in J_i} \kappa_j^{\Theta}$ in $\RR^{m+2}$. Then
$\Box$ is an ($m+2$)-dimensional polytope
and majorizes $\kappa_j^{\Theta}$ ($j \in J_i$)
and $\Delta_{\gamma_k^{\Theta}}$. Let $X_{\Box}$
be the toric variety associated to
the dual fan of $\Box$. Then
$X_{\Box}$ is a compactification of the
complex torus
$T=(\CC^*)^{m+2}$ and smooth outside the union
of $T$-orbits of codimension
$\geq 2$. Therefore, by the non-degeneracy of
$C^*_{\Delta^{i}_{\Theta}} \subset (\CC^*)^{m+2}$
its closure $\overline{ C^*_{\Delta^{i}_{\Theta}} }$
in $X_{\Box}$ is a smooth projective curve.
By the Poincar{\'e} duality of
$\overline{ C^*_{\Delta^{i}_{\Theta}} }$ we have
\begin{equation}
\beta_0(\overline{
C^*_{\Delta^{i}_{\Theta}}})_{\lambda}=
\beta_2(\overline{
C^*_{\Delta^{i}_{\Theta}}})_{\lambda^{-1}}=
\beta_2(
C^*_{\Delta^{i}_{\Theta}})_{\lambda^{-1}}=
MV(\kappa_{j}^{\Theta}
(j \in J_0) )_{\lambda}
\end{equation}
and hence
\begin{equation}
\beta_0(C^*_{\Delta^{i}_{\Theta}})_{\lambda}=
MV(\kappa_{j}^{\Theta}
(j \in J_0) )_{\lambda}-
\sum_{\Gamma} MV(\Gamma_{j}^{\Theta}
(j \in J_i) )_{\lambda}.
\end{equation}
Then we obtain the desired formula for the
first virtual Betti number
$\beta_1(C^*_{\Delta^{i}_{\Theta}})_{\lambda}$.
This completes the proof.
\end{proof}

\begin{corollary}
\begin{enumerate}
\item If $k=2$, then the number of
the Jordan blocks for
the eigenvalue $\lambda \not= 1$
with the second maximal possible size
$n-k$ in $\Phi_{n-k,0}$ is equal to
\begin{equation}
\sum_{\begin{subarray}{c}
\Theta \subset \Gamma_f, s_{\Theta}=n,
\\ \d \Theta =2 \end{subarray}}
F(\Theta )_{\lambda} +\sum_{d=3}^{n-1} (-1)^d
\left\{
\sum_{\begin{subarray}{c}
s_{\Theta}=n, \d \Theta =d,
\\ \d \gamma_2^{\Theta} =1,
\lambda^{d(\gamma_2^{\Theta})}=1
\end{subarray}}
\left(
{\rm length}_{\ZZ}(\gamma_2^{\Theta})
-  \sharp \{ v \prec \gamma_2^{\Theta} \ | \
\d v =0, \lambda^{d(v)}=1 \}
\right) \right\} .
\end{equation}
\item
If $\Gamma_+(f_1), \ldots, \Gamma_+(f_k)$
majorize each other, then the number of
the Jordan blocks for
the eigenvalue $\lambda \not= 1$
with the second maximal possible size
$n-k$ in $\Phi_{n-k,0}$ is equal to
\begin{equation}
\sum_{\begin{subarray}{c}
\Theta \subset \Gamma_f, s_{\Theta}=n,
\\ \d \Theta =k \end{subarray}}
F(\Theta )_{\lambda}.
\end{equation}
\end{enumerate}
\end{corollary}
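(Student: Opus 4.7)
The plan is to apply Theorem \ref{SMB} directly and determine, under each hypothesis, which faces $\Theta$ contribute nontrivially to the alternating sum
\begin{equation*}
\sum_{\Theta \subset \Gamma_f,\ s_{\Theta}=n,\ \d \Theta \geq k} (-1)^{\d \Theta - k} F(\Theta)_{\lambda}.
\end{equation*}

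For part (i), with $k=2$, one has only $J \in \{\emptyset, \{1\}\}$, giving $\delta(\Theta, \emptyset) = \d \gamma_2^{\Theta}$ and $\delta(\Theta, \{1\}) = \d \Theta - 1$. First I would rule out, for $\d \Theta \geq 3$, the faces with $\d \gamma_2^{\Theta} = 0$ (here the maximal $J_0 = \emptyset$ has $n_{\Theta} = 0$, so $F(\Theta)_\lambda = 0$ by case (iii) of Definition 5.7) and those with $\d \gamma_2^{\Theta} \geq 2$ (here $\min_J \delta \geq 2$, so $F(\Theta)_\lambda = 0$ by case (i)). In the surviving case $\d \Theta \geq 3$ with $\d \gamma_2^{\Theta} = 1$, the unique $J_1$ of case (ii) is $\emptyset$, and the formula there collapses to ${\rm length}_{\ZZ}(\gamma_2^{\Theta})$ (under the condition $\lambda^{d(\gamma_2^{\Theta})} = 1$) minus the number of vertices $v \prec \gamma_2^{\Theta}$ with $\lambda^{d(v)}=1$, since the $1$-dimensional mixed volume of a segment is its lattice length and the $0$-dimensional ``mixed volume'' at a vertex is $1$ or $0$ according to the eigenvalue condition at $v$. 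Finally, the divisibility $d(v) \mid d(\gamma_2^{\Theta})$ for any vertex $v \prec \gamma_2^{\Theta}$ forces $F(\Theta)_{\lambda} = 0$ whenever $\lambda^{d(\gamma_2^{\Theta})} \neq 1$, yielding the restriction on the inner sum in part (i). Combined with the sign $(-1)^{\d \Theta - 2} = (-1)^d$, this matches the stated formula.

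For part (ii), when $\Gamma_+(f_1), \ldots, \Gamma_+(f_k)$ majorize each other they share a common dual fan, forcing $\d \gamma_j^{\Theta} = \d \Theta$ for all $j$ and more generally $\d(\gamma_k^{\Theta} + \sum_{j \in J}\kappa_j^{\Theta}) = \d \Theta$ for every $J \subset \{1, \ldots, k-1\}$. Hence $\delta(\Theta, J) = \d \Theta - \sharp J$ and $\min_J \delta = \d \Theta - (k-1)$, so $F(\Theta)_{\lambda} = 0$ as soon as $\d \Theta \geq k+1$ by case (i) of Definition 5.7. The only remaining contribution comes from faces with $\d \Theta = k$, carrying the sign $(-1)^{k-k} = +1$; this is precisely the asserted formula.

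The main obstacle is the bookkeeping in part (i) for $\d \Theta \geq 3$ with $\d \gamma_2^{\Theta} = 1$: one must correctly interpret the degenerate mixed volumes of Definition 5.7(ii) when $J_1 = \emptyset$, identifying the lattice distances $d_{\Theta, \emptyset}$ and $d_{\Theta, \emptyset}^{\{v\}}$ with $d(\gamma_2^{\Theta})$ and $d(v)$ respectively. Once these identifications are in place, the divisibility $d(v) \mid d(\gamma_2^{\Theta})$ handles the collapse to the stated condition $\lambda^{d(\gamma_2^{\Theta})} = 1$, and the remaining combinatorics is routine.
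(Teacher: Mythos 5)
Your proposal is correct and follows exactly the route the paper intends: the corollary is stated without proof as a direct specialization of Theorem \ref{SMB}, and your case analysis of $\delta(\Theta,J)$ (including the degenerate $J_1=\emptyset$ reading of Definition 5.7(ii) and the divisibility $d(v)\mid d(\gamma_2^{\Theta})$ for vertices $v$ of the segment $\gamma_2^{\Theta}$) is precisely the bookkeeping needed. No gaps.
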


\section{Monodromies at infinity over C.I.}\label{sec:6}

In this section, we study the monodromies at
infinity over complete intersection subvarieties
in $\CC^n$. For $2 \leq k \leq n$ let
\begin{equation}
W= \{ f_1= \cdots =f_{k-1}=0 \} \supset
V= \{ f_1= \cdots =f_{k-1}=f_k=0 \}
\end{equation}
be complete intersection subvarieties
of $\CC^n$. Then for the polynomial map
$f=(f_1,f_2, \ldots, f_k) : \CC^n \longrightarrow \CC^k$,
there exists a complex hypersurface
$D \subset \CC^k$ such that the restriction
$\CC^n \setminus f^{-1}(D)
\longrightarrow \CC^k \setminus D$ of $f$ is
a locally trivial
fibration. We assume that the $k$-th coordinate axis
\begin{equation}
A_k=\{ y \in \CC^k \ | \ y_1=y_2=\cdots =y_{k-1}=0\}
\simeq \CC
\end{equation}
satisfies the condition $\sharp (A_k \cap D)<+ \infty$.
Let $C_R$ be a circle in $A_k \simeq
\CC$ centered at the origin $0 \in A_k$
with a sufficiently large radius $R>>0$. Here we
take $R$ large enough so that the open
disk whose boundary is $C_R$
contains the finite set $A_k \cap D$.
Let $g=f|_W=f_k|_W: W=f^{-1}(A_k)
\longrightarrow \CC \simeq
A_k$ be the restriction of $f$ to $W$.
Then by restricting the locally
trivial fibration $W \setminus g^{-1}(A_k \cap D)
\longrightarrow A_k \setminus
(A_k \cap D)$ to $C_R \subset \CC \simeq A_k$ we
obtain a geometric monodromy
automorphism $\Phi^{\infty} :
 g^{-1}(R) \simto g^{-1}(R)$ and the
linear maps
\begin{equation}
\Phi_j^{\infty} : H^j(g^{-1}(R) ;\CC)
\overset{\sim}{\longrightarrow}
H^j(g^{-1}(R) ;\CC) \ \qquad \ (j=0,1,\ldots )
\end{equation}
associated to it. We call $\Phi_j^{\infty}$
the (cohomological) $k$-th principal
monodromies at infinity of $f$.
The semisimple parts of $\Phi_j^{\infty}$
were studied in \cite[Section 5]{M-T-3}.
From now on, we shall determine their
Jordan normal forms for the eigenvalues
$\lambda \not= 1$. Note that if the generic fiber
$g^{-1}(R)$ ($R>>0$) of $g$ satisfies
the condition $H^j(g^{-1}(R) ;\CC) \simeq 0$
($j\neq 0, \ n-k$) (see e.g.
Tib{\u a}r \cite[Theorem 6.2]{Tibar})
then it suffices
to determine the Jordan normal form of
$\Phi_{n-k}^{\infty} : H^{n-k}(g^{-1}(R) ;\CC)
\simto H^{n-k}(g^{-1}(R) ;\CC)$ ($R>>0$).
Let $j: \CC \hookrightarrow \PP^1$ be the
inclusion and $h$ a local coordinate on a
neighborhood of $\infty \in \PP^1$ such that
$\{ \infty \} =h^{-1}(0)$. Then to the object
$\psi_{h}(j_! Rg_!(\CC_{W}))
\in \Dbc(\{ \infty \})$
and the semisimple part of the
monodromy automorphism acting on it, we
associate naturally an element
\begin{equation}
[H_g^{\infty}] \in \KK_0(\HSm).
\end{equation}
\begin{proposition}\label{NJBI}
Assume that $W \subset \CC^n$ has only isolated
singular points, $g: W \longrightarrow \CC$ is
cohomologically tame in the
sense of Sabbah \cite{Sabbah-2}
and the generic fiber
$g^{-1}(R)$ ($R>>0$) of $g$ satisfies
the condition $H^j(g^{-1}(R) ;\CC) \simeq 0$
($j\neq 0, \ n-k$).
Let $\lambda \in \CC^* \setminus \{1\}$. Then
\begin{enumerate}
\item We have $e^{p,q}(
[H_g^{\infty}])_{\lambda}=0$ for $(p,q)
\notin [0,n-k] \times [0,n-k]$.
Moreover for $(p,q) \in [0,n-k] \times [0,n-k]$ we have
\begin{equation}
e^{p,q}( [H_g^{\infty}])_{\lambda}=e^{n-k-q,n-k-p}(
[H_g^{\infty}])_{\lambda}.
\end{equation}
\item For $i \geq 1$, the number
of the Jordan blocks
for the eigenvalue $\lambda$ with sizes $\geq i$
in $\Phi_{n-k}^{\infty} : H^{n-k}(g^{-1}(R) ;\CC)
\simto H^{n-k}(g^{-1}(R) ;\CC)$ ($R>>0$) is equal to
\begin{equation}
(-1)^{n-k} \sum_{p+q=n-k-1+i, n-k+i} e^{p,q}(
[H_g^{\infty}])_{\lambda}.
\end{equation}
\end{enumerate}
\end{proposition}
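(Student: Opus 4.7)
The plan is to follow the template of Proposition \ref{NJB}, replacing the nearby cycle functor at the origin with the nearby cycle functor at infinity on $\PP^1$. First I would upgrade $Rg_*(\CC_W)$ to an object in Saito's category of mixed Hodge modules. Since $W$ has only isolated singular points, the shifted constant sheaf on the smooth locus $W \setminus \mathrm{Sing}(W)$ extends minimally to a perverse sheaf $\F$ on $\CC^n$ that is pure of weight $n-k+1$, so that $Rg_*(\F)$ lives in the derived category of mixed Hodge modules on $\CC \subset \PP^1$. Then I would analyse $\psi_h^p(j_! Rg_*(\F))$ on $\{\infty\}$, where $\psi_h^p = \psi_h[-1]$ is the shifted (hence perversity-preserving) nearby cycle functor associated to the local coordinate $h$ at $\infty$.

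The decisive input is Sabbah's cohomological tameness of $g : W \longrightarrow \CC$ \cite{Sabbah-2}. Together with Saito's formalism, this ensures that for every $\lambda \in \CC^* \setminus \{1\}$ the $\lambda$-eigenspace of the mixed Hodge structure on $\psi_h^p(j_! Rg_*(\F))_\infty$ is in fact \emph{pure}, with weights symmetric about the central weight $n-k$ given by the absolute monodromy filtration of the nilpotent operator $N = \log T_u$. Under the hypothesis that $H^j(g^{-1}(R);\CC) = 0$ for $j \neq 0, n-k$, only the degree $n-k$ direct image of $Rg_*$ contributes nontrivially to $[H_g^\infty]_\lambda$ for $\lambda \neq 1$ (the $H^0$-contribution is trivialised by the monodromy on the connected fiber). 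Consequently $e^{p,q}([H_g^\infty])_\lambda$ vanishes outside $[0,n-k]\times[0,n-k]$, and the symmetry $e^{p,q}([H_g^\infty])_\lambda = e^{n-k-q,n-k-p}([H_g^\infty])_\lambda$ follows from the symmetry of the monodromy filtration about $n-k$, equivalently from the hard Lefschetz isomorphism $N^r : \mathrm{gr}^W_{n-k+r} \simto \mathrm{gr}^W_{n-k-r}$. This settles (i).

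For (ii) I would invoke the standard linear-algebraic fact that, for a nilpotent endomorphism of a finite-dimensional vector space endowed with the monodromy weight filtration $W_\bullet$ centered at $w = n-k$, the number of Jordan blocks of size $\geq i$ equals $\dim \mathrm{gr}^W_{w-1+i} + \dim \mathrm{gr}^W_{w+i}$; this follows by a short telescoping argument on the primitive decomposition of the individual Jordan blocks. Combining this with the purity statement of (i), which allows one to identify $\dim \mathrm{gr}^W_\ell$ on the $\lambda$-eigenspace with $(-1)^{n-k}\sum_{p+q=\ell} e^{p,q}([H_g^\infty])_\lambda$ (the sign accounts for the Euler-characteristic convention in the definition of $e^{p,q}$, only the degree $n-k$ direct image being nonzero), one obtains the formula displayed in the proposition.

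The main obstacle is the purity step: translating Sabbah's cohomological tameness into genuine purity of $\psi_h^p(j_! Rg_*(\F))_\infty$ for $\lambda \neq 1$ in the singular setting, where we must work with the intersection cohomology extension $\F$ rather than the constant sheaf on a smooth variety. Once this purity is established, both assertions reduce to exactly the same linear algebra of the monodromy filtration as in the local Proposition \ref{NJB}, and the global proof essentially formally mirrors the local one.
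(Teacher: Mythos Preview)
Your approach is essentially the paper's, with the same architecture: pass to the intersection cohomology complex $\F$ (pure of weight $n-k+1$), use Sabbah's tameness to control the nearby cycle at $\infty$, and then read off the Jordan block count from the monodromy weight filtration exactly as in Proposition~\ref{NJB}. Two points deserve sharpening so that your outline matches the paper's actual argument.

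First, the object $[H_g^\infty]$ is \emph{defined} via $\psi_h(j_! Rg_!(\CC_W))$, with the constant sheaf and with $Rg_!$, not via $\F$ and $Rg_*$ as you write. The paper therefore makes two explicit identifications on the $\lambda$-part for $\lambda \neq 1$: (a) replacing $\CC_W[n-k+1]$ by $\F$, which is harmless because their difference is supported on the finite set $\mathrm{Sing}(W)$ and contributes only to the eigenvalue $1$; and (b) replacing $j_! Rg_!(\F)$ by $Rj_* Rg_*(\F)$, which the paper obtains by proving an analogue of \cite[Theorem 8.1 (ii)]{Sabbah-2} for the natural morphism $Rg_!\F \to Rg_*\F$ using a nice compactification of $g$. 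Your text jumps directly to $j_! Rg_*(\F)$, so step (b) is implicit; you should state it, since it is exactly where cohomological tameness enters.

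Second, the ``purity step'' you flag as the main obstacle is dispatched in the paper not by an ad hoc argument but by invoking the proof of \cite[Theorem 13.1]{Sabbah-2}: once one is working with $Rj_* Rg_*(\F)$, that theorem shows the relative monodromy filtration on $\psi_{h,\lambda}^p$ coincides (up to shift) with the absolute one, which is precisely the purity/symmetry you want. After this, (i) and (ii) reduce to the same linear algebra you describe, and your account of (ii) is correct.
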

\begin{proof}
Let $\F \in \Dbc(W)$ be the intersection cohomology
complex of $W$. Then by using a nice compactification
of $g: W \longrightarrow \CC$ we can prove an
analogue of \cite[Theorem 8.1 (ii)]{Sabbah-2} for
the natural morphism $Rg_! \F \longrightarrow
Rg_* \F$. Hence for $\lambda \not= 1$ the $\lambda$-part
$\psi_{h, \lambda}(j_! Rg_!(\CC_{W}))
\in \Dbc(\{ \infty \})$ is isomorphic to
\begin{equation}
\psi_{h, \lambda}(j_! Rg_!(\F [-n+k-1])) \simeq
\psi_{h, \lambda}(Rj_* Rg_*(\F [-n+k-1])).
\end{equation}
Then by the proof of Sabbah \cite[Theorem 13.1]{Sabbah-2}
its relative monodromy
filtration is the absolute one (up to a shift),
and the assertions follow.
\end{proof}
In order to rewrite this result
explicitly, assume moreover that
the polynomials $f_1, f_2, \ldots, f_k$ are
convenient.

\begin{definition}[\cite{L-S},
\cite{M-T-3} etc.]\label{dfn:3-1}
\begin{enumerate}
\item For $1 \leq j \leq k$ we
call the convex hull of $\{0\} \cup NP(f_j)$
in $\RR^n$ the Newton
polyhedron at infinity of $f_j$ and
denote it by $\Gamma_{\infty}(f_j)$.
Moreover we set
\begin{equation}
\Gamma_{\infty}(f):=\Gamma_{\infty}(f_1)
+\Gamma_{\infty}(f_2)+
\cdots + \Gamma_{\infty}(f_k).
\end{equation}
\item We say that $\Theta \prec
\Gamma_{\infty}(f)$ (resp. $\gamma \prec
\Gamma_{\infty}(f_j)$) is a face at infinity
if $0 \notin \Theta$ (resp. $0 \notin \gamma$).
\end{enumerate}
\end{definition}
As in Section \ref{sec:4}, for each face at infinity
$\Theta$ of $\Gamma_{\infty}(f)$ we define those
$\gamma_j^{\Theta}$ of $\Gamma_{\infty}(f_j)$ so that
we have
\begin{equation}
\Theta = \gamma_1^{\Theta} + \gamma_2^{\Theta}
+ \cdots + \gamma_k^{\Theta}.
\end{equation}
For $1 \leq j \leq k$ and a face at infinity
$\Theta$ of $\Gamma_{\infty}(f)$
we define $f_j^{\Theta}
\in \CC [x_1, x_2, \ldots, x_n]$ as in
Section \ref{sec:4}.

\begin{definition}\label{dfn:3-3} (\cite{M-T-3})
We say that $f=(f_1, \ldots, f_k)$ is
non-degenerate at infinity if for
any face at infinity $\Theta$ of $\Gamma_{\infty}(f)$
the two subvarieties
$\{ f_1^{\Theta}(x)= \cdots =f_{k-1}^{\Theta}(x)=0 \}$
and
$\{ f_1^{\Theta}(x)= \cdots =f_{k-1}^{\Theta}(x)=
f_{k}^{\Theta}(x)=0 \}$
in $(\CC^*)^n$
are non-degenerate complete intersections.
\end{definition}
From now on, let us assume also that $f$ is
non-degenerate at infinity. Let $\Sigma_1$
be the dual fan of $\Gamma_{\infty}(f)$ and
$\Sigma_0$ the fan formed by the faces of
the first quadrant $\RR_+^n$. By the
convenience of $f_1, \ldots, f_k$, $\Sigma_0$
is a subfan of $\Sigma_1$ and hence we can
construct a smooth subdivision $\Sigma$ of
$\Sigma_1$ without subdividing the cones in
$\Sigma_0$. Then the toric variety $X_{\Sigma}$
associated to $\Sigma$ is a smooth compactification
of $\CC^n$. By the non-degeneracy of $f$ at infinity,
it follows from the construction of
$X_{\Sigma}$ that $W \subset \CC^n$
has only isolated singular points. By
constructing a blow-up of $X_{\Sigma}$
(to eliminate the points of indeterminacy
of the meromorphic extension of $f_k$ to
$X_{\Sigma}$) as in
\cite{M-T-4}, we can check also that
$g: W \longrightarrow \CC$ is
cohomologically tame. Moreover by
\cite[Theorem 6.2]{Tibar} the generic fiber
$g^{-1}(R)$ ($R>>0$) of $g$ satisfies
 $H^j(g^{-1}(R) ;\CC) \simeq 0$
($j\neq 0, \ n-k$). Hence all the assumptions
of Proposition \ref{NJBI} are satisfied.
Now, as in \cite[Section 4]{M-T-4}
and \cite{Raibaut} (see also
\cite{G-L-M}), by using the
blow-up of $X_{\Sigma}$ we can
construct an element $\SS_g^{\infty}
\in \M_{\CC}^{\hat{\mu}}$
such that
$\chi_h(\SS_g^{\infty})=[H_g^{\infty}]$.
We call $\SS_g^{\infty}$ the
motivic Milnor fiber at infinity of $g : W
\longrightarrow \CC$. For each face at infinity
$\Theta$ of $\Gamma_{\infty}(f)$ we define
an element $[Z^*_{\Delta_{\Theta}}] \in
\M_{\CC}^{\hat{\mu}}$ and an integer
$m_{\Theta} \in \ZZ_+$ etc. as in
Section \ref{sec:4}. Then the following result
can be obtained in the same way as
Theorem \ref{general} (see the proof of
\cite[Theorems 5.3 and 7.3]{M-T-4}).

\begin{theorem}\label{GTI}
Assume that $\lambda \in \CC^*\setminus \{1\}$. Then
\begin{enumerate}
\item In the
Grothendieck group $\KK_0(\HS )$, we have
\begin{equation}
\chi_h(\SS_g^{\infty})_{\lambda}= \dsum_{
 0 \notin \Theta,
 \d \Theta \geq k-1 }
\chi_h\big((1-\LL)^{m_{\Theta}}
\cdot[Z_{\Delta_{\Theta}}^*]\big)_{\lambda}.
\end{equation}
In particular, we have
\begin{equation}
\beta (\SS_g^{\infty})_{\lambda}= \dsum_{
 0 \notin \Theta,
 \d \Theta \geq k-1 }
(1-t^2)^{m_{\Theta}} \cdot
\beta (Z^*_{\Delta_{\Theta}})_{\lambda}.
\end{equation}
\item For $i \geq 1$, the number
of the Jordan blocks for the eigenvalue
$\lambda$ with sizes $\geq i$ in
$\Phi_{n-k}^{\infty} : H^{n-k}(g^{-1}(R) ;\CC)
\simto H^{n-k}(g^{-1}(R) ;\CC)$ ($R>>0$)
is equal to
\begin{equation}
(-1)^{n-k} \left\{ \beta_{n-k-1+i}(\SS_{g}^{\infty})_{\lambda}
+ \beta_{n-k+i}(\SS_{g}^{\infty})_{\lambda} \right\} .
\end{equation}
\end{enumerate}
\end{theorem}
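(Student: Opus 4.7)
The plan is to mirror the argument of Theorem \ref{general} in the global setting at infinity, following the strategy of \cite[Theorems 5.3 and 7.3]{M-T-4}. The toric compactification $X_{\Sigma}$ of $\CC^n$ already carries all the combinatorial information needed: the toric divisors at infinity are indexed by the primitive generators of the rays of $\Sigma$ lying outside $\RR_+^n$, and the non-degeneracy of $f$ at infinity guarantees that the proper transform $W'$ of $W$ in $X_{\Sigma}$ meets these toric strata transversally. This allows me to treat faces at infinity of $\Gamma_{\infty}(f)$ in direct parallel to the compact faces of $\Gamma_+(f)$ in the local case.

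First I would construct a modification $\pi\colon Y \longrightarrow X_{\Sigma}$ resolving the indeterminacy locus of the meromorphic extension of $f_k$, so that $f_k \circ \pi$ becomes a well-defined morphism $Y \longrightarrow \PP^1$ and the proper transforms of $W'$, of the divisor at infinity, and of the closure of $f_k^{-1}(0)$ together form a simple normal crossing configuration. This blow-up is the main technical ingredient; care is needed in tracking the multiplicities $d_i$ of $f_k$ along each exceptional divisor to identify the unramified cyclic coverings which enter the motivic Milnor fiber formula at infinity, exactly as in the construction of $\SS_{g,0}$ but with the role of $0 \in \CC$ replaced by $\infty \in \PP^1$ (compare \cite{Raibaut} and \cite[Section 4]{M-T-4}).

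Second, with $Y$ in hand I would apply the A'Campo-type formula for the motivic nearby cycles at infinity to express $\SS_g^{\infty}$ as an alternating sum over non-empty collections of exceptional divisors meeting $W'$. Grouping these contributions according to the face at infinity $\Theta \prec \Gamma_{\infty}(f)$ of dual cone in $\Sigma$, and identifying the transversal toric fibration of each stratum with $(\CC^*)^{m_{\Theta}}$, the alternating sum collapses into the stated expression: the factor $(1-\LL)^{m_{\Theta}}$ comes from the class of these torus factors, and $[Z_{\Delta_{\Theta}}^*]$ encodes the non-degenerate complete intersection transverse to the stratum, together with the $\hat{\mu}$-action induced by the lattice distance $d_{\Theta}$. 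Restricting to the $\lambda$-eigenspace and applying $\chi_h$ then yields part (i), and the virtual Betti polynomial identity follows by taking weighted dimensions.

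Finally, part (ii) is immediate from Proposition \ref{NJBI}(ii): once (i) identifies $\chi_h(\SS_g^{\infty})$ with $(-1)^{n-k}[H_g^{\infty}]$ at the level of the $\lambda$-part, the number of Jordan blocks of size $\geq i$ for eigenvalue $\lambda \neq 1$ in $\Phi_{n-k}^{\infty}$ is obtained by summing $e^{p,q}([H_g^{\infty}])_{\lambda}$ over $p+q \in \{n-k-1+i,\ n-k+i\}$, which collects precisely the virtual Betti numbers $\beta_{n-k-1+i}(\SS_g^{\infty})_{\lambda}+\beta_{n-k+i}(\SS_g^{\infty})_{\lambda}$. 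The main obstacle throughout is verifying that the cohomological tameness of $g\colon W \longrightarrow \CC$ and the vanishing $H^j(g^{-1}(R);\CC)=0$ for $j \neq 0, n-k$ granted by \cite[Theorem 6.2]{Tibar} persist for our explicit resolution, so that the purity argument underlying Proposition \ref{NJBI} applies and the symmetry $e^{p,q}=e^{n-k-q,n-k-p}$ in the $\lambda$-part propagates to $\SS_g^{\infty}$; this is the only place where the compactification must be chosen with care beyond what the local case requires.
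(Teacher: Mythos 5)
Your proposal matches the paper's (very terse) proof strategy: construct $\SS_g^{\infty}$ by a blow-up of the toric compactification $X_{\Sigma}$ that resolves the indeterminacy of the meromorphic extension of $f_k$, group the A'Campo-type alternating sum by the faces at infinity of $\Gamma_{\infty}(f)$ to obtain part (i), and read off part (ii) from Proposition \ref{NJBI}. The verification that $W$ has isolated singularities, that $g$ is cohomologically tame, and that $H^j(g^{-1}(R);\CC)=0$ for $j \neq 0,\,n-k$ is precisely what the paper performs just before stating the theorem, so your emphasis on these hypotheses is well placed.

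One small but real bookkeeping slip: in part (ii) you assert $\chi_h(\SS_g^{\infty}) = (-1)^{n-k}[H_g^{\infty}]$ and then take the number of Jordan blocks to be the bare sum of $e^{p,q}([H_g^{\infty}])_{\lambda}$. The paper's convention at infinity is $\chi_h(\SS_g^{\infty}) = [H_g^{\infty}]$ (no sign, unlike the local Theorem \ref{ITM}), and Proposition \ref{NJBI}(ii) already carries the factor $(-1)^{n-k}$ in front of the sum of $e^{p,q}$. Your two deviations from the paper's conventions cancel, so your final formula agrees with the statement, but as written the intermediate identities are each off by $(-1)^{n-k}$. Otherwise the argument is the same as the paper's and is correct.
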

Moreover, also for $\beta (\SS_g^{\infty})_{\lambda}
\in \ZZ [t]$ ($\lambda \not= 1$)
we can obtain a formula completely
similar to Theorem \ref{FFC}. Therefore, we can
always calculate the numbers of the
Jordan blocks for the eigenvalues $\lambda \not= 1$
in $\Phi_{n-k}^{\infty}$ by the
results in Sections \ref{sec:2}
and \ref{sec:3}.
It is also clear that
the analogues of the results in Sections \ref{sec:4}
and \ref{sec:5} hold for
$\Phi_{n-k}^{\infty}$. We thus find
a striking symmetry between local and global.

\end{document}